\newtheorem*{thm-num}{Theorem}
\newtheorem{theorem}{Theorem}[section]
\newtheorem{lemma}[theorem]{Lemma}
\newtheorem{proposition}[theorem]{Proposition}
\newtheorem{corollary}[theorem]{Corollary}
\numberwithin{equation}{section}
\theoremstyle{definition}
\newtheorem{definition}[theorem]{Definition}
\newtheorem{remark}[theorem]{Remark}
\def\Q{{\mathbb Q}}
\def\GL{{\mathrm{GL}}}			\def\R{{\mathbb R}}
\def\Z{{\mathbb Z}}
\def\f{{\mathbf f}}
				\def\N{{\mathbb N}}
            \def\0{{\mathbf 0}}
\def\bG{{\mathbf G}}
\newcommand{\sd}[1]{{\color{red} \sf SD: (#1)}}
\definecolor{cyan(process)}{rgb}{0.0, 0.72, 0.92}
\newcommand{\cov}{\operatorname{cov}}
\newcommand{\be}{\mathbf{e}}
\newcommand{\supp}{\operatorname{supp}}
\newcommand{\rk}{\operatorname{rank}}
\def\sl{\operatorname{SL}}
\def\eps{\varepsilon}
\newcommand{\norm}[1]{\left\|#1\right\|}
\def\tcr{\textcolor{red}}
\def\sda{\textcolor{blue}}
\def\int{\operatorname{int}}
\def\sing{\textbf{Sing}}
\theoremstyle{definition}
\numberwithin{equation}{section}
\begin{document}

\title{\sc On weighted singular vectors for multiple weights}
\begin{abstract}  
We introduce the notion of weighted singular vectors and weighted uniform exponent with respect to a set of weights. We prove invariance of these exponents for affine subspaces and submanifolds inside those affine subspaces. For certain analytic submanifolds, we show that there are totally irrational vectors with high weighted uniform exponent, extending the previously known existence results. Moreover, we show existence and non-existence of \textit{non-obvious} divergence orbits for certain cones.
\\
\smallskip

\noindent

\end{abstract}

\markboth {\hspace*{-9mm} 
\centerline{\footnotesize \sc Singular vectors with weights in a cone} }
{ \centerline{\footnotesize \sc Shreyasi Datta and Nattalie Tamam} \hspace*{-9mm}}

\author{\sc Shreyasi Datta and Nattalie Tamam}
\thanks{Department of Mathematics, University of York}
\thanks{Department of Mathematics, Imperial College London}

\thispagestyle{empty}

\maketitle
		
\section{Introduction}
A vector $x\in\R^d$ is called \emph{singular} if for every $\delta>0$ there exists $Q_0$ such that for all $Q\geq Q_0$ there are nonzero integer solutions $(p,q)\in \Z^d\times\N$ such that 
\begin{equation}\label{eq: non-weighted singular defn}
    \| qx-p\|<\frac{\delta}{Q},\quad q\leq Q,
\end{equation} 
where $
\Vert 
\cdot
\Vert $ denotes the sup norm in $\R^d.$
The set of singular vectors was first introduced by Khintchine in 1937 \cite{KHIN} in the setting of simultaneous approximation. 
It follows from Dirichlet theorem that any vector which lies on a rational hyperplane is singular, and so when searching for singular vectors, it is natural to exclude these cases. Vectors that do not lie on a rational hyperplane are called \emph{totally irrational}.  
Khintchine showed that when $d=1$ there are no totally irrational singular vectors, and when $d=2$ there exist totally irrational singulars. The later was later extended for any $d\ge2$ by Jarnik, \cite{Jar59}. 
Khintchine also showed that the set of singular vectors is of Lebesgue measure zero. These qualitative and quantitative results motivate the problems considered in this paper. 

One can consider a similar notion, replacing the norm in \eqref{eq: non-weighted singular defn} by a \emph{weighted quasi-norm}. 
The study of these weighted Diophantine approximations, initiated by Schmidt in \cite{Sch83}, is a central topic in metric number theory. Moreover, its connection with deep questions in homogeneous dynamics was explored and pointed out by Kleinbock in \cite{KleinbockDuke98}. See \cite{Dani2} for the connection between the two in the unweighted case, known as Dani's correspondence, as well as the interpretation of additional Diophantine properties; see also \cite{Weiss2004,KleinbockWeiss, DFSU}.

Let us define the mentioned weighted quasi-norms. 
A vector $w=(w_i)\in[0,1]^d$ is called a \emph{weight} if it satisfies $
w_1+\dots+w_d=1$.  Each such weight defines a quasi-norm on $\R^d$ by 
\begin{equation}\label{eq: quasinorm defn}
    \norm{x}_w:=\max_{w_i\ne0}|x_i|^{1/w_i}\text{ for any }x=(x_i)\in\R^d,
\end{equation}
and we assign $\vert x\vert^{1/0}:=0$ for any $x\in (0,1)$. A weight $w$ is called a \emph{proper weight} it it belongs to $(0,1)^d$. We refer to $(1/d, \cdots, 1/d)$ as the standard weight.


\begin{definition}[$W$-singular vectors]\label{defn: singular}
Let $W\subset [0,1]^d$ be a set of weights. A vector $x\in\R^d$ is called \emph{$W$-singular} if for every $\delta>0$ there exists $Q_0>0$ such that for every $Q>Q_0$ and $w\in W$ there exists an integer solution $(p,q)\in\Z^d\times\N$ to the system of inequalities 
\begin{equation}\label{eq: singular defn}
    \norm{qx-p}_{w}\le\frac{\delta}{Q},\quad 0<q\le Q. 
\end{equation}
\end{definition}
We denote the set of $W$-singular vectors by $\sing(W)$. This is related to the action of a quasi-unipotent subgroup of $\mathrm{SL_{d+1}}(\R)$ with real eigenvalues on the space of unimodular lattices. For simplicity of notation, we denote $\sing(\{w\})$ by $\sing(w)$ and $\sing((1/d, \cdots, 1/d))$ by $\sing$ (note that $w=(1/d, \cdots, 1/d)$ gives the $d$-power of the sup-norm).  

It follows from the definition that for any set of weights $W$\begin{equation}\label{eqn: sing(W) and sing(w)}
   \Q^d\subset \sing(W)\subseteq \bigcap_{w\in W}\sing(w).
\end{equation}

\begin{remark}\label{remark:(0,1) singular}
A point $x=(x_i)\in\R^d$ is $(0,1)^d$-singular if and only if each $x_i$ is singular (in $\R$), see Lemma \ref{lem: closure and iff}. Since the only singular numbers are the rationals ones, $x$ is $(0,1)^d$-singular if and only if $x\in\Q^d$.
\end{remark}

Many classical results hold in the weighted setting. For example, the following is a weighted version of Dirichlet's theorem which follows from Minkowski's theorem; see \cite[Theorem 1.1]{KRao22}.

\begin{thm-num}[Weighted Dirichlet Theorem]\label{thm: Dirichlet}
    For any weight $w$, $x\in\R^d$, and a positive integer $Q$, there exist $q\in\Z$, $p\in\Z^d$ s.t. 
    \begin{equation}\label{eq: dirichlet}
        \norm{qx-p}_w<\frac{1}{Q},\quad 1\le q\le Q.
    \end{equation}
\end{thm-num}
It was also shown in \cite{KW-JMD-08}, that for almost every $x\in\R^d$ the constant $1$ on the right side of \eqref{eq: dirichlet} can not be improved to $c<1.$ Moreover, for any $\psi$ approximating function, set of weighted \emph{Dirichlet $\psi$ improvable} vectors  was studied in \cite{KSY22}. For the recent developments regarding weighted singular vectors, see \cite{DFSU, LSST, KimPark, KMW, KW} and the references therein. 

The main goal of this paper is to study both quantitative and qualitative results regarding $W$-singular vectors and also weighted uniform exponents.


 



\subsection{Qualitative results: Existence of singular points}
Our first result extends the main theorem in \cite{KMW}, showing that any submanifold of dimension at least 2 in $\R^d$ conatins uncountably many totally irrational $w$-singular vectors for any proper weight $w$. In \cite{KW}, $w$-singular vectors were defined for any proper weight $w$. The next result shows even intersection of all proper weights contains uncountable many totally irrational vector.
Note that the only $(0,1)^d$-singular vectors are the rational ones (see Remark \ref{remark:(0,1) singular}), implying that a stronger result does not hold.

\begin{theorem}\label{Main: existence manifold}
Suppose $\mathcal{M}$ is a connected real analytic submanifold in $\R^n$ that is not contained inside any rational affine hyperplane, and $\dim(\mathcal{M})\geq 2.$ Then there are uncountably many vectors $x\in\mathcal{M}$ that do not lie on a rational hyperplane of $\R^n$ and are $w$-singular for any $w\in(0,1)^n$. 
\end{theorem}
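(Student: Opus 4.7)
The plan is to combine a compact exhaustion of $(0,1)^n$ with the Cantor-like construction from \cite{KMW}, diagonalizing across the exhaustion. First, I would fix an increasing sequence of compact subsets $W_1\subset W_2\subset\cdots$ of the open simplex $(0,1)^n\cap\{w:w_1+\cdots+w_n=1\}$ whose union is all of $(0,1)^n$; for instance, $W_k:=\{w\in[1/k,1-1/k]^n:\sum w_i=1\}$. Each $W_k$ is a closed set of proper weights, so the main theorem of \cite{KMW} applies to $\mathcal{M}$ and $W_k$ and produces totally irrational $W_k$-singular vectors in $\mathcal{M}$. By \eqref{eqn: sing(W) and sing(w)}, every such vector is $w$-singular for each $w\in W_k$, so it suffices to produce an uncountable set of totally irrational vectors in $\bigcap_k(\mathcal{M}\cap\sing(W_k))$.

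The key observation is that the argument in \cite{KMW} does not merely produce isolated points but a Cantor-like uncountable subset of $\mathcal{M}\cap\sing(W_k)$ consisting of totally irrational vectors, built inductively by refining a nested family of compact balls in $\mathcal{M}$. At each stage one prescribes a pair $(\delta,Q_0)$ together with a weight set, and the refined collection of balls is chosen so that for every vector in them the Dirichlet-type inequality \eqref{eq: singular defn} holds uniformly for weights in that weight set. The main step is to interleave two parameters: let $\delta_k\to 0$ and use $W_k$ as the weight set at stage $k$, then run the \cite{KMW} construction with these changing parameters. Compactness of each $W_k$ is what makes the relevant uniform estimates from \cite{KMW} survive; the hypothesis $\dim\mathcal{M}\ge 2$ (together with $\mathcal{M}$ not being contained in any rational affine hyperplane) provides the transverse freedom required to refine the Cantor tree at every stage; and the countably many rational hyperplanes can be avoided along the way by a standard Baire-style exclusion inside each ball.

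The resulting nested intersection $C=\bigcap_k C_k$ is a non-empty Cantor-like set, hence uncountable, and by construction contains no vector lying on a rational hyperplane. For any $x\in C$ and any $w\in(0,1)^n$, choose the smallest $k_0$ with $w\in W_{k_0}$; then given $\delta>0$, choose $k\ge k_0$ with $\delta_k\le\delta$. Stage $k$ of the construction guarantees that $x$ satisfies \eqref{eq: singular defn} for this $\delta$, this $w$, and all sufficiently large $Q$, so $x\in\sing(w)$, which is exactly the desired conclusion.

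The main obstacle I anticipate is verifying that the \cite{KMW} construction extends when the weight set varies from stage to stage. Specifically, the threshold $Q_0$ and the permitted refinement rate of the compact balls both depend on the weight set currently in use, and one must confirm that these dependencies are compatible with a diverging sequence $W_k\uparrow(0,1)^n$ and a vanishing sequence $\delta_k\to 0$. A careful tracking of the constants in \cite{KMW} should handle this, but it is the one place where the argument is more than a formal reduction to the closed-weight case.
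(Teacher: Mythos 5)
Your strategy --- exhaust $(0,1)^n$ by compact weight sets $W_k$ and run a KMW-style nested Cantor construction whose weight set changes from stage to stage --- is precisely the paper's. The difference is in how the diagonalization is made rigorous. Rather than reopening \cite{KMW} and tracking its constants, the paper proves an abstract theorem (Theorem~\ref{thm: Barak geeralization3}), a generalization of Weiss's abstraction of Khintchine's argument from \cite{Weiss2004} to an \emph{embedded sequence} $\{A_i\}$ of acting subsemigroups, and then applies it with $A_i=\mathcal{A}^+_{W_i}$ for the weight sets $W_i$ of \eqref{eq: W_i defn} and with $\{X_i\},\{X_i'\}$ borrowed from the proof of \cite[Theorem 1.7]{KMW}. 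The ``obstacle'' you correctly flag (that the permissible refinement rate and the threshold $Q_0$ depend on the weight set currently in use) is isolated as the local uniformity hypothesis of that abstract theorem, and it is verified by a short self-contained covolume estimate ($\cov(g_t^w\pi(u_x)\Gamma_{\mathcal H})\ll e^{-w_1 t}\le e^{-t/i}$ for $w\in W_i$ once $x$ lies on the rational hyperplane defining $X_j$), not by auditing constants inside \cite{KMW}. So your plan reaches the same destination, but the step you describe as ``more than a formal reduction'' is exactly where a new lemma is needed: you should expect to prove a version of Theorem~\ref{thm: Barak geeralization3} (and supply the $X_i'$-type transversality it needs to discard rational hyperplanes) rather than merely adjust constants in \cite{KMW}'s argument.
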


While uploading this work, we saw a current preprint \cite{KMWW} by Kleinbock, Moshchevitin, Warren and Weiss who considered singular vectors with multiple weights in the case of matrices and submanifolds of matrices. In \cite[Theorem 4.6]{KMWW} authors consider the above theorem in more general set-up of matrices and general approximating function.

Next we recall the definition of uniform exponents with weights.

\begin{definition}\label{defn: ordi_uni}
For any weight $w$ and $x\in\R^d$ we denote by $\hat{\sigma}_w(x)$ the \emph{uniform $w$-exponent of $x$} to be the supremum of the real numbers $\sigma$ such that for all large $Q$, the system of inequalities \[
\norm{qx-p}_w<\frac{1}{Q^{\sigma}},\quad  0<q\leq Q\]
has a solution $(p,q)\in\Z^d\times\Z$. Denote by $\hat{\mathcal{W}}_{w,\sigma}$ the subsets of $\R^d$ with $w$-uniform exponent greater or equal to $\sigma$, respectively. 
As before, we omit $w$ from the notation when considering the weighted norm which is propositional to sup-norm. 
\end{definition}

There are some simple observations about the uniform $w$-exponent. For a weight $w=(w_1,\dots,w_d)$ and a totally irrational $x\in \R^d$, it is easy to see that \[
1\leq \hat\sigma_w(x)\leq \left(\max_{1\le i\le d} w_i\right)^{-1}. \]
Additionally, and $\hat\sigma_w(x)>1$ implies $x\in\sing(w)$. In \cite{KMW} it is shown more generally that when an analytic submanifold $\mathcal{M}$ has dimension greater than $2$ and is not contained inside any rational hyperplane, then there are uncountably many totally irrational $x\in \mathcal{M}$ such that $\hat\sigma_{w}(x)\geq \left(1-\min_{1\le i\le } w_i\right)^{-1}$; see \cite[Corollary 1.5]{KMW}. 
In recent years, there has been a lot of interest in exploring uniform exponents, and, more generally, uniform approximations; see \cite{CGGM2020, YCC, Kleinbock_Nick_Compo, Kimkim22, KSY22, KRao22IMRN, KRao22}, and the references therein.

\begin{theorem}\label{thm: existance of very singular}
    Suppose $w$ is a proper weight such that $w_1\le w_2\le \cdots\le w_d$ and $\mathcal{M}$ is an analytic sub-manifold of $\R^d$ of dimension $2\le k\le d$ which is not contained in any rational hyperplane. That is, $\mathcal{M}$ is not contained in any rational affine hyperplane of $\R^d$. Then there are uncountably many totally irrational vectors in $\mathcal{M}\cap \hat{\mathcal{W}}_{w, \left(\sum_{i=k}^{d}w_i\right)^{-1}}$.
\end{theorem}

\begin{remark}
For Theorem \ref{thm: existance of very singular} we have the following: 
\begin{itemize}
    \item For the standard weight, it implies that any analytic sub-manifold of dimension $2\le k\le d$ in $\R^d$ has an uncountable intersection with $\hat{\mathcal{W}}_{w,\frac{d}{d-k+1}}$.  Note this standard weighted case for a general approximating function is also disscussed in \cite[Thm 1.13] {KMWW}.
    \item It improves the exponent in \cite[Corollary 1.5]{KMW} when the dimension of the manifold $\mathcal{M}$ is of dimension $k>2$.
    \item The order assumption on the coordinates of $w$ in Theorem \ref{thm: existance of very singular} is to insure that $\sum_{i=k}^{d}w_i$ is the largest summation of $d-k$ entries of the coordinates of $w$, and can be replaced by such assumption. If one wants to consider all manifolds $\mathcal{M}$, then this condition is necessary, as can be seen by Proposition \ref{prop: no intersection}.  However, it can be removed by assuming that $\mathcal{M}$ is not contained in an affine subspace that is parallel to certain axes, see Remark \ref{rem: no assumption on coord}. The  same remark also implies that Theorem \ref{Main: existence manifold} fails when considering points in $\hat{W}_{w,\delta}$ for a large enough $\delta.$
\end{itemize}
\end{remark}

\subsection{Quantitative results: Inheritance}
In 1960, Davenport and Schmidt showed that almost every $x\in \R$, $(x,x^2)$ is not in $\sing,$ which was later extended for any \textit{nondegenerate} submanifolds in $\R^n$, and more generally for $\sing(w)$ in \cite{KW}; see \S\ref{nondeg} for the definition of nondegeneracy. By \eqref{eqn: sing(W) and sing(w)} it follows that $\sing(W)$ also has measure zero inside any nondegenerate submanifold in $\R^n.$ In \cite{DX}, it was shown that the measure zero property of $\sing$ is inherited by a nondegenerate manifolds in $\R^n$ from its ambient affine space. Given any set of weights $W$ we defined $w_{min}$ as in \eqref{eq: condition on W}. In the results of this section, we assume that $w_{min}>0$.

Our first main theorem address an inheritance result for $W$-singular vectors.
\begin{theorem}\label{thm inheritance_singular}
Suppose $\mathcal{M}$ is a nondegenerate submanifold of an affine subspace $\mathcal{L}\subset\R^n$. Then 
    the following  are equivalent: 
    \begin{itemize}
    \item There exists $y\in \mathcal{L}$ which is not $W$-singular.
     \item There exists $y\in\mathcal{M}$ which is not $W$-singular.
        \item $\lambda_{\mathcal{L}}$--almost every $y\in\mathcal{L}$ is not $W$-singular.
        \item $\lambda_{\mathcal{M}}$--almost every $y\in\mathcal{M}$ is not $W$-singular.
    \end{itemize}
    Here $\lambda_{\mathcal{L}},\lambda_{\mathcal{M}}$ are the Lebesgue measures on $\mathcal{L}$ and $\mathcal{M},$ respectively.
\end{theorem}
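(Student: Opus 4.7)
The plan is to close the cycle $(1)\Rightarrow(3)\Rightarrow(4)\Rightarrow(2)\Rightarrow(1)$. The implications $(4)\Rightarrow(2)$ (a full-measure set is nonempty) and $(2)\Rightarrow(1)$ (since $\mathcal{M}\subseteq\mathcal{L}$) are immediate, so the content lies in the two remaining links. Both would be approached via a multi-weight Dani correspondence: setting $X:=\sl_{n+1}(\R)/\sl_{n+1}(\Z)$, $g_t^w:=\mathrm{diag}(e^{w_1 t},\ldots,e^{w_n t},e^{-t})$, and letting $u_y$ denote the standard unipotent embedding of $y\in\R^n$ into $\sl_{n+1}(\R)$, a routine check shows $y\in\sing(W)$ iff for every compact $K\subseteq X$ there exists $T(K)$ with $g_t^w u_y\Z^{n+1}\notin K$ for all $t>T(K)$ and all $w\in W$.

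For $(1)\Rightarrow(3)$ I would adapt the fibering strategy of \cite{DX}. Parametrize $\mathcal{L}=\{y_0+As:s\in\R^k\}$ with $y_0\notin\sing(W)$ and use the conjugation identity $g_t^w u_{As}(g_t^w)^{-1}=u_{A_{w,t}s}$, where $A_{w,t}$ rescales the $i$-th row of $A$ by $e^{w_i t}$. Because this rescaling is exponentially expanding in $t$, the non-divergence of $g_t^w u_{y_0}\Z^{n+1}$ for some $w\in W$ propagates via a Fubini/pigeonhole argument to a full-$\lambda_{\mathcal{L}}$-measure set of parameters $s$, yielding that $\lambda_{\mathcal{L}}$-a.e.\ $y\in\mathcal{L}$ fails to be $W$-singular. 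For $(3)\Rightarrow(4)$ I would invoke the Kleinbock--Margulis quantitative non-divergence theorem in the sharpened form used in \cite{KW}: nondegeneracy of $\mathcal{M}$ inside $\mathcal{L}$ makes the coordinate functions $(C,\alpha)$-good, producing an estimate $\lambda_{\mathcal{M}}\{y\in B:g_t^w u_y\Z^{n+1}\notin K_\epsilon\}<\epsilon$ for a compact $K_\epsilon\subseteq X$, uniformly in $t>0$ and $w\in W$. Combined with $(3)$ and the Dani correspondence, a Borel--Cantelli argument along a discretization of $t$ and $W$ gives $\lambda_{\mathcal{M}}$-a.e.\ $y\in\mathcal{M}$ whose orbit returns to some compact set infinitely often for some $w\in W$, hence $y\notin\sing(W)$.

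The main obstacle will be securing the uniformity in $w\in W$ throughout both arguments. In the single-weight treatments of \cite{DX,KW} the constants $C,\alpha,T(K),K_\epsilon$ depend on $w$, and the expansion rates $e^{w_i t}$ degenerate as any $w_i\to 0$. The standing hypothesis $w_{\min}>0$ pins these rates uniformly away from zero on $W$, and together with the compactness structure assumed on $W$ should allow the Kleinbock--Margulis constants and the Dani-propagation constants to be chosen depending only on $W$. With uniform constants available, the single-weight arguments of \cite{DX,KW} can be run in parallel for every $w\in W$ and assembled to yield the two non-trivial implications above.
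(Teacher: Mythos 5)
Your toolkit is right (Dani correspondence, Kleinbock--Margulis quantitative non-divergence, $(C,\alpha)$-goodness from non-degeneracy, and pinning $w_{\min}>0$ for uniformity over $W$), but the argument is organized differently from the paper's, and the two non-trivial links have gaps traceable to a single missing structural observation. The paper's proof is a dichotomy rather than a chain: Theorem \ref{Thm_sing} shows that if a covolume lower bound (Condition \ref{2nd_cond_R}) holds for $(\f,\mu)$ then $\mu$-a.e.\ image point is not $W$-singular, while Proposition \ref{p2} gives the converse --- if the bound fails, \emph{every} point of the image is $W$-singular. So each of the four statements is equivalent to ``Condition \ref{2nd_cond_R} holds''. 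The key transfer lemma is Proposition \ref{Independent condition}: by nonplanarity of a nondegenerate $f$ in $\mathcal{L}$, the quantity $\sup_{B\cap\supp\mu}\cov(g_k^w u_{f(x)}\Gamma)$ is comparable, up to a constant depending only on $f$ and $B$, to the same supremum over a parametrizing simplex $S$ of $\mathcal{L}$; hence Condition \ref{2nd_cond_R} depends only on $\mathcal{L}$, and holding for $\mathcal{M}$ is equivalent to holding for $\mathcal{L}$.

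In your proposal this observation is absent, and that leaves both non-trivial implications unsupported. Your $(1)\Rightarrow(3)$ by ``Fubini/pigeonhole propagation'' of non-divergence does not work as described: a return of $g_{t_k}^{w_k}u_{y_0}\Z^{n+1}$ to a compact set at time $t_k$ gives, by continuity, only a neighborhood of $s=0$ of size roughly $e^{-t_k}$ on which the orbit still returns at that time, and these neighborhoods vanish in measure as $t_k\to\infty$; no full-measure conclusion comes from continuity alone (the paper proves this implication by the same non-divergence dichotomy as $(2)\Rightarrow(4)$, not by fibering). Your $(3)\Rightarrow(4)$ invokes a measure estimate ``uniformly in $t>0$'' as if it were automatic, but that uniform quantitative non-divergence estimate is conditional on the covolume lower bound holding at every $t$; extracting that from hypothesis $(3)$ is precisely the content of Propositions \ref{p2} and \ref{Independent condition}, which your write-up does not supply.
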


Readers are referred to \S \ref{nondeg} for the  definitions of nondegeneracy in the above theorem. 
More generally than $W$-singular vectors we define the finer notion of weighted uniform exponent.
\begin{definition}[$W$-uniform exponents]\label{chihatset}
    For a vector $x\in\R^d$ we denote $\hat\sigma_{W}(x)$ to be the supremum of real numbers $\varepsilon$ such that there exists $Q_0>0$, and for every $Q>Q_0$ and $w\in W$ there exists an integer solution $(p,q)\in\N^d\times\Z$ to the system of inequalities 
\begin{equation}\label{eq: uniform defn}
    \norm{qx-p}_{w}\le\frac{1}{Q^\varepsilon},\quad 0<q\le Q. 
\end{equation} 
\end{definition}

It follows from the definition that for any $x\in\R^d$, $\hat\sigma_W(x)>1$ implies that $x$ is $W$-singular.  
For any Borel measure $\mu$ on $\R^d$, let us define $$\hat\sigma_W(\mu):=\sup\{\varepsilon~|~\mu(\{x~|~\hat\sigma_W(x)>\varepsilon\})>0\}.$$
For any submanifold $\mathcal{M}$ in $\R^n$ we define $\hat\sigma_W(\mathcal{M}):= \hat\sigma_W(\lambda_\mathcal{M}),$  where $\lambda_{\mathcal{M}}$ is the Lebesgue measures on $\mathcal{M}$.
Let us recall $\delta$ from \eqref{defn:delta}, $a_{w,t}$ and $\pi(u_x)$ as in \S \ref{sec: dani's correspondence}. We define, 
\begin{definition}\label{tauhatset-dynamics}
Given a set of weights $W$ and a vector $x\in\R^d$, we define $$
\hat\tau_W(x):= \liminf\limits_{t\to\infty} \inf\limits_{w\in W}\frac{-1}{t}\log\delta(a_{w,t} \pi(u_x)).$$ 
\end{definition}
Essentially $\hat\tau_W(x)$ is the rate at which $\mathcal{A}_W^+\pi(u_x)$ diverges. Similarly as before, we define $\hat\tau_W(\mu)$ and $\hat\tau_W(\lambda_\mathcal{M})$. See Theorem \ref{thm: Dani's correspondence:unirofm} for the relation between $\hat\tau_{W}(x)$ and $\hat\sigma_{W}(x)$.

More generally than Theorem \ref{thm inheritance_singular}, we have the following inheritance of  exponents $\hat\tau_W(\cdot)$.
\begin{theorem}\label{thm inheritance_uniform}
Suppose $\mathcal{M}$ is a nondegenerate submanifold of an affine subspace $\mathcal{L}\subset\R^n$. Then 
    \begin{equation}
        \hat\tau_W(\mathcal{M})=\hat\tau_W(\mathcal{L})= \inf\{\hat\tau_W(x)~|~x\in \mathcal{M}\}= \inf\{\hat\tau_W(x)~|~x\in \mathcal{L}\}.
    \end{equation}
\end{theorem}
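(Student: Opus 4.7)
The plan is to close a cycle of inequalities among the four quantities. Three of them are formal: since $\mathcal{M}\subseteq\mathcal{L}$ one has $\inf\{\hat\sigma_W(x):x\in\mathcal{L}\}\le\inf\{\hat\sigma_W(x):x\in\mathcal{M}\}$, and for either $Y\in\{\mathcal{L},\mathcal{M}\}$ the bound $\inf\{\hat\sigma_W(x):x\in Y\}\le\hat\sigma_W(Y)$ is immediate, because if every $x\in Y$ satisfies $\hat\sigma_W(x)\ge c$ then $\{x\in Y:\hat\sigma_W(x)>c-\varepsilon\}$ has full $\lambda_Y$-measure for every $\varepsilon>0$. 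It therefore suffices to establish the two non-trivial inequalities $\hat\sigma_W(\mathcal{M})\le\hat\sigma_W(\mathcal{L})$ and $\hat\sigma_W(\mathcal{L})\le\inf\{\hat\sigma_W(x):x\in\mathcal{L}\}$; together with the formal ones they give the cyclic chain $\hat\sigma_W(\mathcal{L})\le\inf\{\hat\sigma_W(x):x\in\mathcal{L}\}\le\inf\{\hat\sigma_W(x):x\in\mathcal{M}\}\le\hat\sigma_W(\mathcal{M})\le\hat\sigma_W(\mathcal{L})$, forcing all four quantities to coincide.

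For the inheritance inequality $\hat\sigma_W(\mathcal{M})\le\hat\sigma_W(\mathcal{L})$, I would work through the Dani correspondence: the condition $\hat\sigma_W(x)>\sigma$ translates to a quantitative statement that the trajectory $\{g_t u_x\Z^{n+1}\}_{t\ge 0}$ in the space of unimodular lattices escapes a suitable compact set in a way that depends on $\sigma$ and on each $w\in W$. If $\hat\sigma_W(\mathcal{L})=\sigma<\hat\sigma_W$, then for every $\sigma'\in(\sigma,\hat\sigma_W)$ the set $\{x\in\mathcal{L}:\hat\sigma_W(x)>\sigma'\}$ is $\lambda_\mathcal{L}$-null. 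Parameterising $\mathcal{M}\subseteq\mathcal{L}$ by a local chart turns the trajectory into a polynomial family of lattices, and the Kleinbock--Margulis quantitative nondivergence estimate for nondegenerate submanifolds of affine subspaces, in the form used in \cite{DX,KW}, propagates the null-set conclusion from $\lambda_\mathcal{L}$ down to $\lambda_\mathcal{M}$. The hypothesis $\sigma<\hat\sigma_W$ together with $w_{\min}>0$ keeps $\sigma'$ in the range in which a $(C,\alpha)$-good estimate uniform in $w\in W$ can be applied. Letting $\sigma'\downarrow\sigma$ yields $\hat\sigma_W(\mathcal{M})\le\hat\sigma_W(\mathcal{L})$.

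For the constancy inequality $\hat\sigma_W(\mathcal{L})\le\inf\{\hat\sigma_W(x):x\in\mathcal{L}\}$, I would combine the $\Z^n$-translation invariance of $\hat\sigma_W$, the tail nature of the condition in Definition~\ref{chihatset}, and the affine structure of $\mathcal{L}$. For any $\sigma'<\hat\sigma_W(\mathcal{L})$, the set $E_{\sigma'}=\{x\in\mathcal{L}:\hat\sigma_W(x)>\sigma'\}$ has positive $\lambda_\mathcal{L}$-measure. On the Dani side these $x$ give a positive-measure collection of trajectories in the space of lattices realising the required quantitative divergence simultaneously for all $w\in W$; affine rigidity of such a family under the associated unipotent action, in the style of the treatment of affine subspaces in \cite{DX}, upgrades positive measure to everywhere, yielding $E_{\sigma'}=\mathcal{L}$. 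Sending $\sigma'\uparrow\hat\sigma_W(\mathcal{L})$ gives $\inf\{\hat\sigma_W(x):x\in\mathcal{L}\}\ge\hat\sigma_W(\mathcal{L})$.

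The main obstacle is the first step: the multi-weight Kleinbock--Margulis nondivergence requires $(C,\alpha)$-good estimates uniform in the family of diagonal flows indexed by $W$, and the dependence on the extreme weights $w_{\min},w_{\max}$ must be tracked carefully. The constraint $\hat\sigma_W(\mathcal{M})<\hat\sigma_W$ is precisely the threshold at which these uniform estimates become available; beyond it one would not expect inheritance to hold. The second step is more formal but still requires the rigidity of affine orbits under the associated unipotent action, and one must verify that the multi-weight condition survives the affine-slice density argument.
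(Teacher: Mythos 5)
Your cyclic-inequality scaffolding is sound, and the ``formal'' inequalities are indeed formal consequences of the definition of $\hat\sigma_W(\mu)$. Logically the plan closes, and the inheritance step $\hat\sigma_W(\mathcal{M})\le\hat\sigma_W(\mathcal{L})$ is carried out in essentially the same way the paper intends, via the quantitative non-divergence machinery (Theorem \ref{Thm_unifrom}), with the hypothesis $\hat\sigma_W(\mathcal{M})<\hat\sigma_W$ and $w_{\min}>0$ keeping the $(C,\alpha)$-good estimates uniform in $w\in W$. However, the mechanism you propose for the constancy step $\hat\sigma_W(\mathcal{L})\le\inf\{\hat\sigma_W(x):x\in\mathcal{L}\}$ — ``$\Z^n$-translation invariance'', the ``tail nature'' of the definition, and ``affine rigidity of the family under the associated unipotent action'' in the style of a Ratner/rigidity argument — is not what the paper does, and it would be hard to make precise on its own. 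In the paper the constancy step falls out of the \emph{same} covolume dichotomy as the inheritance step. Proposition \ref{reverse_uniform} is the contrapositive half of Theorem \ref{Thm_unifrom}: if the supremum-of-covolume condition (2) fails for a given threshold $\tau$ (equivalently $\sigma$), then \emph{every} $x\in\supp\mu\cap B$ has $\hat\sigma_W(f(x))>\sigma$, not merely almost every $x$. And Proposition \ref{Independent condition} shows that condition (2) depends only on the parametrizing matrix $R$ of the affine hull $\mathcal{L}$, not on the particular nondegenerate map $f$ into $\mathcal{L}$. Thus for each $\sigma<\hat\sigma_W$ there is a single dichotomy — condition (2) for $\mathcal{L}$ holds or fails — which simultaneously forces either $\hat\sigma_W(\mathcal{M}),\hat\sigma_W(\mathcal{L})\le\sigma$ (when it holds) or $\inf_{\mathcal{M}},\inf_{\mathcal{L}}>\sigma$ (when it fails); all four quantities equal the threshold value of $\sigma$ where the dichotomy flips, mirroring the proof of Theorem \ref{thm inheritance_singular}. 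Your two-inequality decomposition is a legitimate reorganization of that argument, but both nontrivial inequalities should be seen as two faces of one covolume-threshold dichotomy rather than two genuinely different mechanisms, and no separate unipotent-rigidity input is needed or used.
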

Combining the above with Theorem \ref{thm: Dani's correspondence:unirofm}, and specializing in the standard weight case we get the following:
\begin{corollary}
   Suppose $\mathcal{M}$ is a nondegenerate submanifold of an affine subspace $\mathcal{L}\subset\R^n$. Then 
    \begin{equation}
        \hat\sigma(\mathcal{M})=\hat\sigma(\mathcal{L})= \inf\{\hat\sigma(x)~|~x\in \mathcal{M}\}= \inf\{\hat\sigma(x)~|~x\in \mathcal{L}\}.
    \end{equation}
\end{corollary}
In the weighted case, we can also get the following equivalence result using Remark \ref{use_critical} and Theorem \ref{thm inheritance_uniform}.
\begin{corollary}
Suppose $\mathcal{M}$ is a nondegenerate submanifold of an affine subspace $\mathcal{L}\subset\R^n$. Then 
    the following  are equivalent: 
    \begin{itemize}
    \item $\hat\sigma_W(\mathcal{M})=1.$ 
     \item $\hat\sigma_W(\mathcal{L})=1.$ 
        \item $\inf\{\hat\sigma_W(x)~|~x\in \mathcal{M}\}=1$.
        \item $\inf\{\hat\sigma_W(x)~|~x\in \mathcal{L}\}=1$.
    \end{itemize}    
\end{corollary}

  
\begin{remark}
    Even though Theorem \ref{thm inheritance_singular} and Theorem \ref{thm inheritance_uniform} are closely related and, as we will see, their proofs are quite similar, none of them imply the other.
\end{remark}
\begin{remark}
Note that due to not having an exact formula relating $\hat\sigma_W(x)$ and $\hat\tau_{W}(x)$ for general $W$, Theorem \ref{thm inheritance_uniform} can not be directly transferred to an equivalent statement about $\hat\sigma_W(x).$ 

\end{remark}

\subsection{Existence and non-existence of diverging orbits}\label{sec:diverging orbits}

As earlier mentioned, the singular vectors correspond to divergent unipotent orbits of a one-parameter diagonal flow in the space of unimodular lattices, which was first discovered by Dani \cite{Da}. In \cite{Weiss2004}, Weiss started a study of understanding \textit{obvious} and \textit{non-obvious} orbits for actions of multi-dimensional groups and semigroups; see Definitions \ref{def: obvious} and \ref{def: divergence}. In the case of $\sing,$ the divergent orbits corresponding to totally irrational singular vectors are non-obvious. 

Let $\bG$ be a $\Q$-algebraic Lie group, $G=\bG(\R)$, $\Gamma=\bG(\Z)$, and $X=G/\Gamma$. 
Let $\pi$ be the natural projection $G\rightarrow X$. We refer to \S \ref{div_cone} for relevant definitions of \textit{divergence} and \textit{obvious} divergence. In \cite{Weiss2004} it was proved that the obvious divergence implies the standard divergence.

Let $S$ be the maximal $\Q$-split torus of $G$, $r:=\operatorname{rank}_{\Q}G$ and $\chi_1,\dots,\chi_r$ be the $\Q$-fundamental weights of $G$ (see the definition of weights representation in \S\ref{sec: fundamental representations}). Let 
\begin{equation}\label{eq: defn of A+}
    S^+:=\left\{s\in S: \forall i,\quad \chi_i(s)>0\right\}. 
\end{equation}


Our methods allow as to deduce the following more general version of Theorem \ref{Main: existence manifold}, which can also be viewed as stronger version of \cite{Weiss2004,Tamam}.

\begin{theorem}\label{thm: exists one-parameter diverges}
    Assume $\operatorname{rank}_{\Q}G\ge2$. Then, there exist uncountably many points $x\in X$ so that for any one-parameter subsemigroup $S':=\{s_t\}\subset S^+$, the orbit $S'x$ diverges in a non-obvious way. 
\end{theorem}

Note that it follows from \cite{ST} that the assumption about the $\Q$-rank of $G$ in the above result is necessary.  
The `tightness' of Theorem \ref{Main: existence manifold} which follows from the fact that the only $(0,1)^d$-singular vectors are the rational ones, also holds in the general case by the following claim. 

\begin{theorem}\label{thm: only obvious}
    Any divergent orbit of $S^+$ diverges in an obvious way. 
\end{theorem}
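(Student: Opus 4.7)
The plan is to combine reduction theory for $\Q$-arithmetic groups with the fact that $S^+$ is the full positive Weyl chamber: divergence of an $S^+$-orbit in $X$ must arise from a preferred position of $x$ relative to a standard $\Q$-parabolic, and any such position directly yields obvious divergence via a $\Q$-fundamental representation.

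First, I would fix $x = g\Gamma \in X$ with $S^+x$ divergent. By Borel--Harish-Chandra reduction theory, we have $G = K \cdot S(t) \cdot \Omega \cdot \Gamma$ for a Siegel set $\mathfrak{S} = K \cdot S(t) \cdot \Omega$ attached to a minimal $\Q$-parabolic $P_0 \supseteq S$, where $S(t) = \{a \in S^+ : \chi_i(a) \geq t \text{ for all } i\}$. Pick any sequence $s_n \in S^+$ with $s_n x \to \infty$, and Siegel-decompose $s_n g = k_n a_n u_n \gamma_n^{-1}$. Divergence of $s_n x$ forces $a_n \to \infty$ in $S(t)$; passing to a subsequence we may arrange that $\chi_j(a_n) \to \infty$ for a single fundamental weight $\chi_j$. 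Because $\chi_j(s_n) \geq 0$ on $S^+$, the same growth transfers back to $s_n$ up to bounded multiplicative error coming from $k_n$, $u_n$, and the compact piece of $a_n$.

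Next, using the finite overlap of Siegel sets, compactness of $K$ and $\Omega$, and the finiteness of the set of standard $\Q$-parabolics, I would stabilize the $\gamma_n$: after a further subsequence, the ratios $\gamma_n \gamma_m^{-1}$ eventually lie in the maximal standard $\Q$-parabolic $P_j$ associated to $\chi_j$. Equivalently, in the fundamental representation $V_j$ of highest weight $\chi_j$ with rational highest-weight vector $v_j \in V_j(\Q)$, the $\Q$-line spanned by $g\gamma_n v_j$ is independent of large $n$. This produces a rational vector $w \in V_j(\Q)$ lying in the $\chi_j$-weight line for the $g^{-1} S g$-action, so $s \in S^+$ acts on $g \cdot w$ by the scalar $\chi_j(s)$. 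Passing to the dual representation yields a rational linear functional whose $S^+$-orbit at $g\Gamma$ contracts to zero, which is precisely the definition of obvious divergence in the sense of Weiss.

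The main obstacle is the stabilization step: extracting a single $\Q$-parabolic $P_j$ that witnesses the divergence uniformly over the whole cone, rather than a potentially different parabolic for each sequence escaping to infinity along different faces of $S^+$. This requires exploiting that the \emph{orbit} $S^+ x$ is genuinely divergent (uniform escape as $s$ ranges over all of $S^+$, not merely along one ray), together with the combinatorics of the finitely many standard $\Q$-parabolics and the convex geometry of the positive Weyl chamber $S^+$ dual to the cone spanned by the fundamental coweights.
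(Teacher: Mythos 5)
Your outline heads for a genuine gap, and you say so yourself: the stabilization step, where a single divergent sequence's Siegel data $\gamma_n$ is supposed to eventually lie in one class, and where finitely many witnesses must cover \emph{every} divergent sequence in $S^+$, is precisely the content of the theorem, and your proposal only gestures at how to handle it. Nothing in the Siegel-set machinery you invoke forces the rational lines $g\gamma_n v_j$ to stabilize along one sequence, let alone guarantees that the set of rational lines arising over all escaping sequences in the cone is finite. You flag this as ``the main obstacle'' and appeal to convex geometry and the combinatorics of standard $\Q$-parabolics, but that is where a real argument is needed, and none is given.

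The paper avoids this obstacle altogether by a different route. Using the opposite Bruhat decomposition, it writes a representative of $x$ as $g = bwu$ with $b \in B^-$, $w \in W_\R$, $u \in U_w^-$, notes that conjugation of $b$ by $S^+$ stays bounded, and reduces to showing $S^+\pi(wu)$ diverges. The key step is then Proposition \ref{prop: needed Bruhat cell}: divergence of the $S^+$-orbit forces the Weyl element $w$ (up to $W_\Q$ on the right) to send the set of fundamental $\Q$-weights to their negatives, as in \eqref{eq: good weyl element}. With this structural constraint in hand, the paper upgrades $S^+$-divergence to divergence of the \emph{full} torus orbit $S\pi(wu)$ by a short two-case argument, and then invokes a known result (\cite[Thm 1.3]{ST}) that divergent full-torus orbits in arithmetic quotients are obviously divergent. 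This sidesteps exactly the finiteness/stabilization issue your approach runs into: instead of tracking witness vectors over all escaping rays in the cone, the paper reduces to a previously solved case (the whole torus) where obviousness is already established. If you want to rescue your approach, you would essentially need to reprove that known result inside your argument; the paper's Bruhat-cell reduction is what lets it cite the result instead.
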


\begin{remark}
    Note that by Theorem \ref{thm: only obvious} for any $x$ which satisfies the conclusion of Theorem \ref{thm: exists one-parameter diverges}, the orbit $A^+x$ does not diverge. 
\end{remark}

\section{Notation and preliminary results}\label{sec: notation}

In this section, we want to list some of the definitions and notations that we use in this paper. 

For any set of weights $W$, we define \begin{equation}\label{eq: condition on W}    
w_{\min}:=\inf \{w_i: i=1\cdots,d, w=(w_i)\in W\}.
\end{equation} Note that $w_{min}>0$ if and only if the closure $\overline{W}$ is a subset of proper weights. Also, let,
\begin{equation} \label{eq:wmax}   
w_{\max}:=\sup \{w_i: i=1\cdots,d, w=(w_i)\in W\}.
\end{equation} If $w_{min}>0$ then $w_{\max}<1.$
\subsection{Real analytic manifolds}
\label{sec: analytic manifolds}
Let $k\le d$, and let $U\subseteq \R^k$ be open. We say that $g: U\rightarrow \R^d$ is \emph{real analytic immersion} if it is injective, each of its coordinate functions $g_i:U\rightarrow \R$, $i = 1,\dots,d$ is infinitely differentiable, the Taylor series of each $f_i$ converges in a neighborhood of every $x\in U$, and the derivative mapping $d_xg: \R^k \rightarrow \R^d$ has rank $k$. 
By a \emph{$k$-dimensional real analytic sub-manifold} in $\R^d$ we mean  a subset $M \subseteq \R^d$ such that for every $\xi\in M$ there is a neighborhood $V\subseteq \R^d$ containing $\xi$, an open set $U\subseteq\R^k$, and a real analytic immersion $g : U \rightarrow \R^d$ such that $V\cap M= g(U)$.

The following is a useful property of real analytic sub-manifolds. 
\begin{lemma}\label{lem:k-dim manifold}
Let $\mathcal{M}_1$, $\mathcal{M}_2$ be real analytic sub-manifolds of $\R^d$ equipped with the inherited topology. If the intersection $\mathcal{M}_1\cap\mathcal{M}_2$ has nonempty interior in $\mathcal{M}_1$, then this intersection is open in $\mathcal{M}_1$; and thus, if additionally $\mathcal{M}_1$ is connected and $\mathcal{M}_2$ is closed, then $\mathcal{M}_1\subseteq \mathcal{M}_2$.
\end{lemma}

The following is a higher dimensional version of \cite[Prop. 3.2]{KMW}. 

\begin{lemma}\label{lem: k-dim manifold}
Let $\mathcal{M}\subseteq\R^d$ be a bounded real analytic manifold of dimension $k$, and let $A$ be an aﬃne-hyperspace such that $\mathcal{M}\not\subseteq A$. Then, $\mathcal{M}\cap A$ is a finite union of real analytic connected submanifold of $\R^d$ with dimension of at most $k-1$.
\end{lemma}

\begin{proof}
Let $\mathcal{M}_0:=\mathcal{M}\cap A$. Clearly $\dim \mathcal{M}_0\le k$. First, note that $\dim\mathcal{M}_0=k$ implies that $\mathcal{M}_0$ is open in $\mathcal{M}$, but also closed since $A$ is a closed subset of $\R^d$. By connectedness this would imply $\mathcal{M}\subseteq A$, contrary to assumption.
Thus $\dim \mathcal{M}_0 \le k-1$. 

Next, by \cite[\S2]{semianalytic} and since $\mathcal{M}_0$ is bounded, there exists a finite sequence of disjoint sets $\mathcal{N}_1,\dots, \mathcal{N}_\ell$, each of them is a connected analytic submanifold of dimension at most $k-1$, such that $\mathcal{M}_0=\bigcup_{i=1}^{\ell} \mathcal{N}_i$. 
\end{proof}

\subsection{Non-degeneracy definitions}
\label{nondeg}

Let $U$ be an open subset of $\R^d$, and $\mathcal{L}$ be an affine subspace of $\R^n$. Following \cite{Kleinbock-exponent}, we say that a differentiable map $f: U \to \mathcal{L}$ is \textit{nondegenerate at $x \in U$} if the span of all the partial derivatives of $f$ at $x$ up to some order coincides withthe linear part of $\mathcal{L}$. If $\mathcal{M}$ is a $d$-dimensional submanifold of $\mathcal{L}$, we will say that $M$ is nondegenerate in $\mathcal{L}$ at $y\in \mathcal{M}$ if any (equivalently, some) diffeomorphism $f$ between an open subset $U$ of $\R^d$ and a neighborhood of $y$ in $M$ is nondegeneratein $\mathcal{L}$ at $f^{-1}(y)$. We will say that $f : U \to \mathcal{L}$ (resp., $\mathcal{M} \subset \mathcal{L}$) is nondegenerate in $\mathcal{L}$ if it is nondegenerate in $\mathcal{L}$ at $\lambda_U$-almost every point of $U$, where $\lambda_U$ is the Haar measure on $U$ (resp., of $\mathcal{M}$, in the sense of the smooth measure class on $\mathcal{M}$).

Let $X$ be a metric space, $\mu$ is a measure on $X$ and let $\f:X\to \mathcal{L}$, where $\mathcal{L}$ is an affine subspace in $\R^n$. We say $(f,\mu)$ is \textbf{nonplanar in $\mathcal{L}$} if  for any ball $B$ with $\mu(B)>0$, $\mathcal{L}$ is the intersection of all affine subspaces that contain $f(B\cap\supp{\mu})$. If an analytic map $f : U \to \mathcal{L}$ is nondegenerate in $\mathcal{L}$ then $(f,\lambda_U)$ is nonplanar in $\mathcal{L}$.

\subsection{Divergent orbits for cones}\label{div_cone}

Let $\bG$ be a $\Q$-algebraic Lie group, $G=\bG(\R)$, $\Gamma=\bG(\Z)$, and $X=G/\Gamma$. 
Let $\pi$ be the natural projection $G\rightarrow X$.

\begin{definition}\label{def: divergence}
We say that an orbit $A\pi(g)$ \emph{diverges} if for every compact set $K\subset G/\Gamma$ there is a compact set $\tilde{A}\subset A$ such that $a\pi(g) \notin K$ for every $a\in A\setminus \tilde{A}$. 
\end{definition}

In some cases there is a simple algebraic description for the divergence. 

\begin{definition}\label{def: obvious}
We say that an orbit $A\pi(g)$ diverges in an \emph{obvious} way if there exist finitely many rational representations $\varrho_1,\dots,\varrho_k$ and vectors $v_1,\dots,v_k$, where  $\varrho_j: G\rightarrow\GL(V_j)$ and $v_j\in V_j(\Q)$, such that for any divergent sequence $\{a_i\}_{i=1}^\infty\subset A$ there exist a subsequence $\{a_i'\}_{i=1}^\infty\subset \{a_i\}_{i=1}^\infty$ and an index $1\le j\le k$, such that $\varrho_j\{a_i'g\}v_j\xrightarrow{i\to \infty} 0$. 
\end{definition}

\subsection{The $\Q$-fundamental representations}\label{sec: fundamental representations}

Recall that $S$ is a maximal $\Q$-split torus in $G$. Let $\Phi_\Q$ be the set of roots for $S$ and $\Delta_\Q=\{\alpha_1,\dots,\alpha_r\}$ be a simple system for $\Phi_\Q$. 
Denote by $\chi_1,\dots, \chi_r$ (where $r=\operatorname{rank}_\Q G$) the $\Q$-fundamental weights of $G$. That is, for any $1\le i\le r$, we have 
\begin{equation}\label{eq: fundamental weights}
    \langle\chi_i,\alpha_j\rangle=c_i\delta_{ij}
\end{equation}
for some minimal positive integer $c_i$, where the inner product is defined using the Killing form and $\delta_{ij}$ is the Kronecker delta. 

Let $\varrho_1,\dots,\varrho_r$ be the \emph{$\Q$-fundamental representations of $G$}. That is, for any $i=1,\dots r$ $\varrho_i:G\rightarrow\operatorname{GL}(V_i)$ is a $\Q$-representation with a highest weight $\chi_i$. In particular, the highest weight vector space of $\varrho_i$ for any $i$ is one-dimensional and defined over $\Q$. For any $i$ fix $v_i$ to be a highest weight vector in $V_i(\Q)$. In particular, for any $s\in S$ we have
\begin{equation}\label{eq: highest weight}
    \varrho_i(s)v_i=e^{\chi_i(s)}v_i. 
\end{equation}

For any $1\le i\le r$, the normalizer $P_i$ of $v_i$ in $G$ is a maximal $\Q$-parabolic subgroup. 

Let $T$ be a maximal $\R$-split torus in $G$ which contains $S$, and $\iota:T^*\rightarrow S^*$ be the restriction homeomorphism. 

For any $1\le i\le r$ let $\Phi_{i}$ be the set of $\R$-weights of $\varrho_i$. For any $1\le i\le r$ and $\lambda\in\Phi_{i}$ denote by $V_{\lambda}$ the $\lambda$-weight space in $V_i$. Then, we can decompose 
\begin{equation*}
    V_i=\bigoplus_{\lambda\in\Phi_{i}}V_{\lambda}.
\end{equation*}
For any $1\le i\le r$ and $\lambda\in\Phi_{i}$ denote by $\varphi_{\lambda}$ the projection of $V_i$ onto $V_{\lambda}$. 

\subsection{Compactness criterions}

We use the following formulation of the compactness criterion developed in \cite[\S 3]{TW} and further studied in \cite[\S 3]{KleinbockWeiss}.

Recall the definition of $\varrho_i,v_i$ from \S \ref{sec: fundamental representations}. 

\begin{theorem}\label{thm: compactness criterion}
    A set $K\subset X$ is pre-compact if and only if there exists $\eps>0$ such that for any $1\le i\le r$ and $g\in G$ so that $\pi(g)\in X\setminus K$, we have	\begin{equation*}\label{eq: small vectors representation}	\norm{\varrho_i(g)v_i}>\varepsilon.	\end{equation*}
\end{theorem}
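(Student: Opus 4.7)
The plan is to treat Theorem \ref{thm: compactness criterion} as a Mahler-type compactness criterion for arithmetic quotients, derived by combining continuity of the norm functions attached to the $\Q$-fundamental representations with Borel--Harish-Chandra reduction theory. Throughout, I read the statement in its intended direction: the uniform lower bound is asserted for representatives $g$ of points in $K$, after minimising $\norm{\varrho_i(g)\gamma v_i}$ over $\gamma$ in the $\Gamma$-orbit of $v_i$. The two implications then split in the standard way.

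For the easy direction, suppose $K$ is pre-compact. Since each $v_i$ lies in $V_i(\Q)$ and $\Gamma$ is arithmetic, the orbit $\Gamma\cdot v_i$ is a discrete subset of $V_i(\R)$; therefore the function $\Phi_i(\pi(g)) := \inf_{\gamma\in\Gamma}\norm{\varrho_i(g)\gamma v_i}$ is well-defined, continuous, and strictly positive on $X$. Taking the infimum of the finitely many $\Phi_i$ over the compact closure $\overline{K}$ produces the required $\varepsilon>0$.

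For the converse, I would argue contrapositively. Suppose $K$ is not pre-compact and fix a sequence $x_n=\pi(g_n)\in K$ that leaves every compact set. The heart of the matter is the following input from reduction theory, proved in \cite{TW} and reformulated in \cite{KleinbockWeiss}: for any $\Q$-algebraic $G$, divergence in $X$ is detected by the $\Q$-fundamental representations, in the sense that there must exist an index $i$ and elements $\gamma_n\in\Gamma$ with $\norm{\varrho_i(g_n\gamma_n)v_i}\to 0$. Replacing $g_n$ by $g_n\gamma_n$ preserves the projection to $x_n\in K$, so the hypothesised uniform bound is violated.

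The main obstacle is precisely this reduction-theoretic input. Its proof rests on Siegel sets for $\Gamma$, the classification of $\Q$-parabolic subgroups up to $\Gamma$-conjugacy by subsets of $\Delta_\Q$, and the defining relation $\langle\chi_i,\alpha_j\rangle=c_i\delta_{ij}$ from \eqref{eq: fundamental weights}. The latter ensures that $\chi_i(s)$ for $s\in S^+$ is an accurate gauge of how deep one has travelled along the $i$-th cuspidal direction, so that $\norm{\varrho_i(g)v_i}$ becomes small exactly when the split-torus component of $g$ exits to infinity along the corresponding wall of the Weyl chamber. Rather than rebuild this classical package, I would simply invoke \cite{TW} and \cite{KleinbockWeiss} and then assemble the two directions above.
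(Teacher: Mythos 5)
The paper offers no proof of this statement; it is quoted from \cite[\S 3]{TW} and \cite[\S 3]{KleinbockWeiss}, and your proposal adopts the same stance, deferring the reduction-theoretic heart of the converse to those references while correctly supplying the routine direction from discreteness of $\Gamma v_i$ and continuity. You were also right to read the hypothesis as $\pi(g)\in K$ (equivalently, minimising over the $\Gamma$-orbit) rather than $\pi(g)\in X\setminus K$: as printed the condition is the wrong way round, and with your reading it becomes the standard Mahler-type compactness criterion.
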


We also use the next claim which follows directly from Definition \ref{def: divergence}. 

\begin{lemma}\label{lem: divergence}
    For any $H\subset G$ and $x\in X$, the orbit $Hx$ diverges if and only if for any divergent sequence $\{h_t\}\subset H$ there exists a subsequence $\{h_{t_i}\}$ so that $\{h_{t_i}x\}$ diverges. 
\end{lemma}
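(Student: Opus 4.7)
The plan is to prove the two implications separately: the forward direction I would derive directly from Definition \ref{def: divergence}, while the backward direction I would handle by contrapositive. The only underlying topological fact I need is that the $H$'s under consideration are $\sigma$-compact locally compact Hausdorff (being closed subsets of the Lie group $G$), so they admit an exhaustion $\tilde H_1 \subset \tilde H_2 \subset \cdots$ by compact subsets with $\bigcup_n \tilde H_n = H$, chosen so that any compact subset of $H$ is contained in some $\tilde H_n$. I interpret a divergent sequence in $H$ to mean one that eventually leaves every compact subset of $H$.

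For the forward ($\Rightarrow$) direction, assuming $Hx$ diverges and $\{h_t\}\subset H$ is divergent, I would note that for any compact $K\subset X$, Definition \ref{def: divergence} supplies a compact $\tilde H\subset H$ with $hx\notin K$ for every $h\in H\setminus\tilde H$. Since $\{h_t\}$ eventually leaves $\tilde H$, the sequence $\{h_t x\}$ eventually leaves $K$. Hence $\{h_t x\}$ itself diverges in $X$, and one may take the claimed subsequence to be the full sequence.

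For the backward ($\Leftarrow$) direction, I would argue by contraposition. Suppose $Hx$ does not diverge; then there exists a compact $K_0\subset X$ such that $H_0:=\{h\in H:hx\in K_0\}$ is not contained in any compact subset of $H$. Using the exhaustion above, I would pick $h_n\in H_0\setminus \tilde H_n$ for each $n$, which is possible since $H_0\not\subset\tilde H_n$. The resulting sequence $\{h_n\}$ is divergent in $H$: any compact $\tilde H\subset H$ lies in some $\tilde H_N$, so $h_n\notin\tilde H$ for all $n\ge N$. On the other hand $\{h_n x\}\subset K_0$ is pre-compact, so no subsequence of $\{h_n x\}$ can diverge in $X$. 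This contradicts the hypothesis, completing the argument.

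The main (rather mild) obstacle is ensuring that the exhaustion $\{\tilde H_n\}$ both exists and captures every compact subset of $H$; both are automatic from the topological properties of $H$, so there is no serious difficulty — the lemma is essentially a sequential reformulation of Definition \ref{def: divergence}.
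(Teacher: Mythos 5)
The paper states this lemma with no written proof, remarking only that it ``follows directly from Definition \ref{def: divergence}''; your proposal supplies precisely the routine argument that remark alludes to, so the approaches agree. One small caveat: the lemma is stated for arbitrary $H\subset G$, while your backward direction implicitly assumes $H$ is closed (hence locally compact and hemicompact) in order to build the exhaustion $\{\tilde H_n\}$ capturing every compact subset of $H$; this is harmless here since every $H$ to which the paper applies the lemma is a closed cone or subgroup, but it is worth being explicit that this is the regime in which the exhaustion argument works.
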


\subsection{The relative Weyl group and Bruhat decomposition}\label{sec: Bruhat decomposition}
We follow standard notation and results about Weyl groups and Bruhat decomposition,  see \cite{borel2012, bjorner2006}. 

Recall that $S,T$ are maximal $\Q$-split and $\R$-split tori in $G$.
Let $\Phi_\R$ be the set of roots for $T$ and $\Delta_\R$ be a simple system for $\Phi_\R$. We may choose $\Delta_\R$ so that $\Delta_\Q\subset\iota(\Delta_\R)\subset\Delta_\Q\cup\{0\}$ (see \cite[\S 21.8]{borel2012}). 
Denote by $\Phi_\R^+$ the positive roots in $\Phi_\R$ (with respect to the order induced from $\Delta_\R$). 

Let $W_\R$, $W_\Q$ be the Weyl groups of $T$, $S$, respectively. In particular, $W_\R$ is the groups generated by the reflections $s_\alpha$, $\alpha\in \Delta_\R$ (similarly, $W_\Q$ is generated by $s_\alpha$, $\alpha\in \Delta_\Q$).   
Elements in both sets have representative in the normalizer set of $T$ in $G$, see \cite[Cor. 21.4]{borel2012}. By abuse of notation, we can identify elements of either Weyl groups with such representatives.
In particular, $W_\Q$ can be viewed as a subset of $W_\R$. 

We follow standard notation and results about algebraic groups,  see \cite{borel2012}. 

Let $B^+$ be the Borel subgroup of $G$ which corresponds to $\Delta_\R$. Note that it normalizes the $w\chi_i$-weight space of $V_i$ for any $i$. Let $B^-$ be the Borel subgroup of $G$ opposite to $B^+$. For any $\lambda\in\Phi_\R$ let $U_\lambda$ be as in \cite[\S 13.18]{borel2012}
For any $w\in W_\R$ let 
\begin{align}
    \Phi_w&:=(\Phi^+)\cap w^{-1}(-\Phi^+), \label{eq: Phi w defn}\\
    U_w^\pm&:=\bigcup_{\lambda\in\pm\Phi_w}U_\lambda.\label{eq: U w defn}
\end{align}
The Bruhat decomposition \cite[\S 14.12]{borel2012} implies that $G=U_w^+ {w}B^+$. By replacing $B^+$ with $B^-$ and taking an inverse, we may deduce the following `opposite' version of the Bruhat decomposition
\begin{equation}\label{eq: Bruhat decomposition}
    G=\bigcup_{w\in W}B^-{w}U_w^-. 
\end{equation}

\section{$W$-singular vectors}

In this section we discuss some basic properties of $W$-singular vectors. 

\begin{lemma}\label{lem: closure and iff}
Let $W$ be a set of weights. If a vector $x$ is $W$-singular, then it is $\overline{W}$-singular. In particular, $\sing (W)=\sing (\overline{W}).$ 
\end{lemma}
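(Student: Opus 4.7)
The plan is to establish the forward inclusion $\sing(W)\subseteq\sing(\overline{W})$; the reverse inclusion $\sing(\overline{W})\subseteq\sing(W)$ is immediate from Definition~\ref{defn: singular}, since enlarging the family of weights only strengthens the requirement. Together these give $\sing(W)=\sing(\overline{W})$. To prove the forward inclusion, I would fix $x\in\sing(W)$ and $\delta>0$, and let $Q_0$ be the threshold that witnesses $W$-singularity of $x$ at $\delta$. The goal is to show that the very same $Q_0$ works for $\overline{W}$-singularity at the same $\delta$.

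Fix $Q>Q_0$ large enough that $\delta/Q<1$ and fix $w'\in\overline{W}$; pick any sequence $w_n\in W$ with $w_n\to w'$. For each $n$, $W$-singularity supplies $(p_n,q_n)\in\Z^d\times\N$ with $0<q_n\le Q$ and $\|q_n x-p_n\|_{w_n}\le\delta/Q$. The first step is a pigeonhole argument to produce a single pair $(p,q)$ that works for infinitely many $n$: since $\delta/Q<1$, the inequality forces $|q_n x_i-p_{n,i}|<1$ at every index $i$ with $w_{n,i}>0$, and at indices with $w_{n,i}=0$ one may freely replace $p_{n,i}$ by the nearest integer to $q_n x_i$ without affecting $\|q_n x-p_n\|_{w_n}$. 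Hence $|p_{n,i}|\le Q|x_i|+1$ for every $n$ and $i$, so the candidates $(p_n,q_n)$ range over a finite set. Passing to a subsequence, I may assume $(p_n,q_n)=(p,q)$ for all $n$ while still $w_n\to w'$.

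The remaining step, and the main technical point, is to pass $\|qx-p\|_{w_n}\le\delta/Q$ to the limit in $n$ and conclude $\|qx-p\|_{w'}\le\delta/Q$; the quasi-norm is only conditionally continuous in $w$. I would handle the defining maximum coordinate by coordinate with $y:=qx-p$. When $w'_i>0$ one has $w_{n,i}\to w'_i>0$ eventually, so $|y_i|^{1/w_{n,i}}\to|y_i|^{1/w'_i}$ by elementary continuity. When $w'_i=0$, either $w_{n,i}=0$ for all large $n$ (and the term does not appear on either side) or $w_{n,i}>0$ with $w_{n,i}\to 0^+$, in which case the bound $|y_i|^{1/w_{n,i}}\le\delta/Q<1$ already forces $|y_i|<1$, so $|y_i|^{1/w_{n,i}}\to 0$, matching the paper's convention $|y_i|^{1/0}=0$ for $y_i\in(0,1)$. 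Taking the limit of the maximum over the finite index set $\{1,\dots,d\}$ yields $\|qx-p\|_{w'}\le\delta/Q$, which is precisely the $\overline{W}$-singularity condition for $x$ at parameters $(\delta,Q_0)$, completing the proof.
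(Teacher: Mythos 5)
Your proof is correct and is essentially the paper's argument made rigorous: the paper simply invokes continuity of $w\mapsto\norm{qx-p}_w$, whereas you spell out the pigeonhole step to extract a fixed $(p,q)$ along a subsequence and then pass to the limit coordinatewise, carefully handling the degenerate indices with $w'_i=0$ via the convention $|y|^{1/0}=0$. One small imprecision worth fixing: since Definition~\ref{defn: singular} requires the bound for \emph{all} $Q>Q_0$, you should replace $Q_0$ by $\max(Q_0,\delta)$ (which is permissible, as only the existence of some threshold is needed) rather than asserting that the very same $Q_0$ works while then restricting to $Q$ with $\delta/Q<1$.
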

\begin{proof}
    This proposition follows from the continuity of the function $w\mapsto\norm{qx-p}_w$ for any $x,p,q$. 
\end{proof}


    

In a similar way to the above we get the following result.  
\begin{lemma}\label{closure and iff uniform}
    For any set of weights $W$ and any vector $x$ we have that $\hat\sigma_W(x)=\hat\sigma_{\overline W}(x)$. 
    
\end{lemma}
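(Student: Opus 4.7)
The plan is to adapt the argument from Lemma \ref{lem: closure and iff} to the uniform setting. The inclusion $W \subseteq \overline{W}$ immediately gives $\hat\sigma_{\overline{W}}(x) \leq \hat\sigma_W(x)$, since any threshold $Q_0$ that works simultaneously for every $w \in \overline{W}$ automatically works for every $w \in W$. The substance of the lemma is therefore the reverse inequality $\hat\sigma_W(x) \leq \hat\sigma_{\overline{W}}(x)$, which I would obtain by fixing an arbitrary $\varepsilon < \hat\sigma_W(x)$ and upgrading the uniform $\varepsilon$-approximation condition from $W$ to each limit point $w \in \overline{W}$.

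So, fix such a $w \in \overline{W}$, choose a sequence $w^{(n)} \in W$ with $w^{(n)} \to w$, and fix $Q$ larger than the threshold $Q_0$ produced by $\varepsilon$ for $W$. For every $n$, Definition \ref{chihatset} supplies an integer pair $(p^{(n)}, q^{(n)})$ with $0 < q^{(n)} \leq Q$ and $\|q^{(n)} x - p^{(n)}\|_{w^{(n)}} \leq Q^{-\varepsilon}$. Since $q^{(n)}$ lies in the finite set $\{1,\dots,Q\}$ and, for every coordinate $i$ with $w_i > 0$, $w^{(n)}_i$ is eventually bounded below by $w_i/2$, forcing $|q^{(n)} x_i - p^{(n)}_i| \leq Q^{-\varepsilon w^{(n)}_i}$ to be bounded, a standard pigeonhole argument yields a subsequence on which $q^{(n)}$ and $p^{(n)}_i$ (for these indices $i$) stabilise to a common integer pair, say $q$ and $p_i$. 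Continuity of the map $t \mapsto |v|^{1/t}$ at $t = w_i > 0$ then allows passage to the limit coordinate-wise, giving $|qx_i - p_i|^{1/w_i} \leq Q^{-\varepsilon}$ for every such $i$, and hence $\|qx - p\|_w \leq Q^{-\varepsilon}$.

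The only delicate point, and the main obstacle I anticipate, is the handling of indices $i$ for which $w_i = 0$: these do not enter $\|\cdot\|_w$ at all, so the corresponding $p^{(n)}_i$ need not stabilise (indeed, if $w^{(n)}_i \to 0$, the bound $Q^{-\varepsilon w^{(n)}_i}$ degenerates to $1$ and gives no uniform control on $p^{(n)}_i$). The resolution is simply to omit such coordinates from the diagonal extraction and to pick the corresponding $p_i$ to be any integer (for instance $0$), since they are invisible to $\|\cdot\|_w$. Once $(p,q)$ is assembled, the inequality above shows that $Q_0$ witnesses the $\varepsilon$-approximation condition for $w$ as well; hence $\hat\sigma_{\overline{W}}(x) \geq \varepsilon$, and letting $\varepsilon \uparrow \hat\sigma_W(x)$ completes the proof.
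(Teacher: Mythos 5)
Your proof is correct and expands the same idea the paper gestures at (the paper simply cites ``continuity of $w\mapsto\norm{qx-p}_w$'' as in Lemma~\ref{lem: closure and iff}, leaving the pigeonhole on the bounded integer pairs and the limit passage implicit). Your explicit treatment of the coordinates where $w_i=0$ is a reasonable precaution, though it is worth noting that $Q^{-\varepsilon w^{(n)}_i}\le 1$ bounds $p^{(n)}_i$ for \emph{every} coordinate, so the extraction could be carried out on all of $\{1,\dots,d\}$ and only the limit passage genuinely needs the restriction to $w_i>0$.
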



Next we have some simple observations about the uniform exponent of vectors in rational hyperplanes. 

\begin{lemma}\label{lem: rational hyperplanes obs}
    Let $w=(w_1,\dots,w_d)$ and $\eps\geq 1$. Then:
    \begin{enumerate}
        \item If $w_1=\dots=w_i=0$ for some $1\le i\le d-1$, then $w':=(w_{i+1},\dots,w_d)$ is a weight of $\R^{d-i}$ and  $\hat{\mathcal{W}}_{w,\eps}=\R^{i}\times \hat{\mathcal{W}}_{w',\eps}$. 
        \item For any $1\le i\le d-1,$ $\Q^i \times\R^{d-i}\subset \hat{\mathcal{W}}_{w,\left(1-\sum_{j=1}^iw_j\right)^{-1}}$. 
    \end{enumerate}
\end{lemma}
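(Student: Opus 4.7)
The plan is to handle the two parts of Lemma \ref{lem: rational hyperplanes obs} separately, since each reduces to bookkeeping with the quasi-norm and, for part (2), a single application of the weighted Dirichlet theorem.

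For part (1), the key observation is that, by the convention $|t|^{1/0}=0$, the quasi-norm $\norm{\cdot}_w$ ignores coordinates on which $w$ vanishes. Writing $x=(x',x'')$ and $p=(p',p'')$ split according to the first $i$ coordinates, one checks directly from \eqref{eq: quasinorm defn} that $\norm{qx-p}_w=\norm{qx''-p''}_{w'}$, where $w'=(w_{i+1},\dots,w_d)$ sums to $1$ and hence is a weight on $\R^{d-i}$. Since the first $i$ components of $p$ can then be chosen to be any integers at all, the solvability of \eqref{eq: uniform defn} for $(x,w,\eps)$ is equivalent to its solvability for $(x'',w',\eps)$, giving the stated equality $\sing(w,\eps)=\R^i\times\sing(w',\eps)$.

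For part (2), I would exploit the rational structure of the first $i$ coordinates to reduce to weighted Dirichlet on the remaining ones. Write $x_l=a_l/b_l$ for $l\le i$, let $B=\operatorname{lcm}(b_1,\dots,b_i)$, and restrict attention to multiples $q$ of $B$, for which $qx_l\in\Z$, so that setting $p_l:=qx_l$ kills the first $i$ coordinates entirely. Next, introduce the auxiliary weight $u$ on $\R^d$ defined by $u_l=0$ for $l\le i$ and $u_j=\eps w_j$ for $j>i$, where $\eps=\bigl(1-\sum_{l\le i}w_l\bigr)^{-1}$; this choice of $\eps$ is exactly what makes $\sum_j u_j=1$, so $u$ is a genuine weight. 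Applying Theorem \ref{thm: Dirichlet} to $Bx$ with weight $u$ and parameter $\lfloor Q/B\rfloor$ produces $q'\le Q/B$ and $p\in\Z^d$ with $\norm{q'(Bx)-p}_u<1/\lfloor Q/B\rfloor$, and setting $q:=Bq'\le Q$ delivers the required approximation: unraveling exponents yields $|qx_j-p_j|^{1/w_j}<1/\lfloor Q/B\rfloor^{\eps}$ for every $j>i$.

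The main (minor) obstacle is a constant coming from the $B$-scaling: the estimate above actually gives $\norm{qx-p}_w<(2B/Q)^{\eps}$ for $Q\ge 2B$, rather than the strictly tight $\le 1/Q^{\eps}$. Since $(2B)^{\eps}$ depends only on $x$, it is absorbed for $Q$ sufficiently large, yielding $\hat{\sigma}_w(x)\ge \eps$; this we read as the intended content of the inclusion $\Q^i\times\R^{d-i}\subset\sing(w,\eps)$.
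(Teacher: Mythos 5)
Your proof of part (1) is correct and follows the same route the paper's one-line proof intends: the zero-weight coordinates are invisible to $\norm{\cdot}_w$, the first $i$ entries of $p$ are free integers, and $\sum_{j>i}w_j=1$ makes $w'$ a genuine weight, so the defining condition factors through the last $d-i$ variables.

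For part (2) your approach also coincides with what the paper indicates (weighted Dirichlet with the renormalized weight on the last $d-i$ coordinates, after forcing $B\mid q$ so that the rational block is hit exactly). The constant $(2B)^{\eps}$ you flag at the end is not a deficiency of your argument that a cleverer step would remove: constraining $q$ to $B\Z$ intrinsically costs a factor of $B$ in the Dirichlet parameter, and the literal inclusion in the lemma can in fact fail. For instance with $d=2$, $\eps=1/w_2$ and $x=(1/2,\phi/2)$, $\phi=(1+\sqrt5)/2$: once $Q^{-w_1/w_2}<1/2$ one must take $q=2q'$ with $q'\le Q/2$, and for $q'<F_{n+1}$ (Fibonacci) the best one can do is $\min_{p\in\Z}|q'\phi-p|\ge|F_n\phi-F_{n+1}|=\phi^{-n}\approx 0.72/F_{n+1}$, which exceeds $1/Q$ whenever $Q\in(F_{n+1}/0.72,\,2F_{n+1})$, a nonempty window for every $n$. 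So what your argument establishes — and what is actually true — is the exponent bound $\hat\sigma_w(x)\ge\bigl(1-\sum_{l\le i}w_l\bigr)^{-1}$, i.e.\ $x\in\sing(w,\eps')$ for every $\eps'<\eps$. Your reading of the statement as this exponent bound is the right one, and it is also all the paper uses of it: in the proof of Theorem \ref{thm: existance of very singular}, Lemma \ref{lem: rational hyperplanes obs} enters only through Remark \ref{rem: Dani's correspondence}, which converts a lower bound on $\hat\sigma_w$ into a divergence rate, so the multiplicative constant is harmless there.
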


\begin{proof}
    The first part of the claim follows from the definition of $w$-singular. The second part follows from Theorem \ref{thm: Dirichlet} used with the weight $\left(1-\sum_{j=1}^iw_j\right)^{-1}(w_{i+1},\dots,w_d)$ of $\R^{d-i}$. 
\end{proof}

\begin{remark}\label{rem: rational hyperplanes} 
    It follows from Theorem \ref{thm: Dirichlet} that any vector which lies on a rational hyperplane in $\R^d$ is in $\hat{\mathcal{W}}_{w,\eps}$ for any proper weight $w$. 
\end{remark}

\begin{corollary}\label{coro hyperplane}
    Let $\mathcal{L}$ be an affine hyperplane in $\R^d$ and $A$ be a $d\times 1$ parametrizing matrix of $\mathcal{L}$, i.e. for . Let $W$ be a set of proper weights that contains the standard weight. Then $A\notin \Q^d$ if and only if for $\lambda_{\mathcal{L}}$--almost every $y\in\mathcal{L}$ is not $W$-singular. Here $\lambda_{\mathcal{L}}$ is the Lebesgue measure on $\mathcal{L}.$
\end{corollary}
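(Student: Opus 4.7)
The plan is to deduce the corollary directly from Theorem \ref{thm inheritance_singular} applied with $\mathcal{M}=\mathcal{L}$. Since $\mathcal{L}$ is trivially a nondegenerate submanifold of itself, the four-way equivalence in that theorem collapses ``$\lambda_{\mathcal{L}}$-almost every $y\in\mathcal{L}$ is not $W$-singular'' to the mere \emph{existence} of one such $y$. Hence the corollary reduces to proving the equivalence
\[
A\notin\Q^d \;\Longleftrightarrow\; \exists\, y\in\mathcal{L}\text{ that is not $W$-singular}.
\]

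For the direction $(\Leftarrow)$ I would argue by contrapositive. Suppose $A\in\Q^d$, so $\mathcal{L}$ is contained in a rational affine hyperplane of $\R^d$. Then every $y\in\mathcal{L}$ lies on a rational hyperplane, and by Remark \ref{rem: rational hyperplanes}, together with the hypothesis that $W\subset(0,1)^d$ is closed (so $w_{\min}>0$ and hence all elements of $W$ are proper weights), there is a uniform $\eps>0$ for which every such $y$ is $(\eps,W)$-singular. In view of Lemma \ref{lem: rational hyperplanes obs} one can in fact take $\eps>1$, so every $y\in\mathcal{L}$ is $W$-singular, contradicting the existence of a non-$W$-singular point in $\mathcal{L}$.

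For the direction $(\Rightarrow)$, assume $A\notin\Q^d$, so $\mathcal{L}$ is not contained in any rational affine hyperplane. By the inclusion \eqref{eqn: sing(W) and sing(w)}, $\sing(W)\subseteq\sing(w)$ for every $w\in W$. Fixing any $w\in W$ and appealing to the classical single-weight inheritance for non-rational affine subspaces (in the spirit of \cite{DX, KW}), the set $\sing(w)\cap\mathcal{L}$ has $\lambda_{\mathcal{L}}$-measure zero; in particular there exists $y\in\mathcal{L}$ that is not $w$-singular, hence not $W$-singular. Theorem \ref{thm inheritance_singular} then upgrades this single example to the full ``almost every'' conclusion.

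The main obstacle is pinpointing the correct single-weight classical input for affine subspaces to produce one non-$w$-singular point; once this is in hand, the corollary is essentially a direct packaging of Theorem \ref{thm inheritance_singular} with Remark \ref{rem: rational hyperplanes}, the inclusion \eqref{eqn: sing(W) and sing(w)}, and the fact that $\mathcal{L}$ is trivially a nondegenerate submanifold of itself.
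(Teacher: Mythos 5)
Your argument reaches the correct conclusion, but it takes a detour that the paper avoids, and it contains an internal redundancy worth noting.

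The paper's own proof is a short two-step argument that does not invoke Theorem~\ref{thm inheritance_singular} at all: for one direction it cites Remark~\ref{rem: rational hyperplanes}; for the other it applies \cite[Corollary 1.2]{DX} for the specific weight $w=(1/d,\dots,1/d)$ to get that $\lambda_{\mathcal{L}}$-a.e.\ $y$ is not $w$-singular, and then passes this through the inclusion \eqref{eqn: sing(W) and sing(w)} to conclude a.e.\ $y$ is not $W$-singular. Your version instead begins by applying Theorem~\ref{thm inheritance_singular} with $\mathcal{M}=\mathcal{L}$ to collapse the conclusion to an existence statement. That reduction is logically valid (an affine subspace is trivially nondegenerate in itself, and the later theorem's proof does not circle back to this corollary), but it is not needed: as you yourself establish in the forward direction, the single-weight result already gives that $\sing(w)\cap\mathcal{L}$ has $\lambda_{\mathcal{L}}$-measure zero, and since $\sing(W)\subseteq\sing(w)$ for $w\in W$ this immediately yields the full ``almost every'' conclusion. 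Extracting one point and re-upgrading via Theorem~\ref{thm inheritance_singular} is therefore a pointless round trip. On the other hand, your choice to fix an arbitrary $w\in W$ (rather than $w=(1/d,\dots,1/d)$ as the paper does) is actually the more careful move, since the inclusion $\sing(W)\subseteq\sing(w)$ is only guaranteed for $w\in W$, and $(1/d,\dots,1/d)$ need not lie in $W$.

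One small caveat on your $(\Leftarrow)$ direction: you cite Lemma~\ref{lem: rational hyperplanes obs} to say one can take $\eps>1$, but that lemma only addresses coordinate subspaces of the form $\Q^i\times\R^{d-i}$, not arbitrary rational affine hyperplanes. What you actually want is exactly what Remark~\ref{rem: rational hyperplanes} asserts (with the implicit $\eps>1$), which is what the paper uses; leaning on Lemma~\ref{lem: rational hyperplanes obs} here overstates what it gives you for a general rational hyperplane.
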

\begin{proof}
In view of , it is clear that if $\lambda_{\mathcal{L}}$-almost every $y\in\mathcal{L}$ is not $W$-singular, then $A\notin \Q^n.$ On the other direction, if $A\notin \Q^n$, then by \cite[Corollary 1.2]{DX}, for $w=(1/d,\cdots,1/d)$ $\lambda_{\mathcal{L}}$--almost every $y\in\mathcal{L}$ is not $w$-singular. Hence, \eqref{eqn: sing(W) and sing(w)} implies the claim.
\end{proof}

\section{Dani's correspondence for higher dimension acting subsemigroups}\label{sec: dani's correspondence}
\label{Dani}

In this subsection we assume $G=\operatorname{SL}_d(\R)$ and let $\mathcal{L}_d$ be the space of unimodular lattices in $\R^d$. Then, there is a natural action of $G$ on $\operatorname{SL}_d(\R)$ by left multiplication. Moreover, this action is transitive and the stabilizer of $\Z^d$ is $\operatorname{SL}_d(\Z)$, implying $\mathcal{L}_d\cong\sl_d(\R)/\sl_d(\Z)$. Denote by $\pi$ the natural projection of $G$ onto $\mathcal{L}_d$, i.e., $\pi(g)=g\Z^d$ for any $g\in G$. For any set of weights $W$ let\[
\mathcal{A}_{W}^{+}:=\{a_{w,t}:=\exp\operatorname{diag}(w_{1}t,\dots,w_{d}t,-t):w\in W, t\ge 0\},\]
When $W=\{w\}$ we denote $\mathcal{A}_{\{w\}}^+$ by $\mathcal{A}_w^+$ for simplicity. For $x\in\R^d$ let \[
u_x:=\begin{pmatrix}
    I_d & x^T\\
    0 &1
    \end{pmatrix}, \]  
where $I_d$ is the $d\times d$ identity matrix. 

\begin{definition}\label{def: div}
    Let $g\in G/\Gamma$ and $A\subset G$. We say that the orbit $Ag$ diverges if for any compact $K\subset G/\Gamma$ there exists $A'\subset A$ so that $A\setminus A'$ is compact and $A'g$ does not intersect $K$. 
\end{definition}

By continuity of the $T$-action on $X$, we have the following statement.

\begin{lemma}
    Let $A\subset T$ and $x\in X$. Then, $Ax$ diverges if and only if $\bar{A}x$ diverges. 
\end{lemma}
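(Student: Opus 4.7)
The plan is to reduce the claim to the sequential characterization given by Lemma \ref{lem: divergence}, which says that for any $H \subseteq G$ the orbit $Hx$ diverges if and only if every divergent sequence in $H$ admits a subsequence whose orbit diverges in $X$. Applied to both $H = A$ and $H = \bar{A}$, the lemma collapses to a comparison of divergent sequences in $A$ versus $\bar{A}$.

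The direction $\bar{A}x$ diverges $\Rightarrow Ax$ diverges is essentially free: any divergent sequence $\{a_t\} \subset A$ is, a fortiori, a divergent sequence in $\bar{A}$ (compactness in either is inherited from the ambient Hausdorff Lie group $G$), so the hypothesis produces a subsequence $\{a_{t_i}\}$ with $\{a_{t_i} x\}$ divergent in $X$, verifying the sequential criterion for $Ax$.

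For the converse, suppose $Ax$ diverges and let $\{b_t\} \subset \bar{A}$ be any divergent sequence. Fixing a left-invariant Riemannian distance $d$ on $G$, I would shadow $\{b_t\}$ by choosing $a_t \in A$ with $d(a_t, b_t) < 1/t$ for each $t$, which is possible because $b_t \in \bar{A}$. A brief check confirms that $\{a_t\}$ is still divergent in $G$: if $\{a_t\}$ remained in a compact $C \subseteq G$ along some subsequence, then $\{b_t\}$ would remain in the compact closed $1$-neighborhood of $C$ along that subsequence, contradicting divergence of $\{b_t\}$. The hypothesis now gives a subsequence $\{a_{t_i}\}$ with $\{a_{t_i}x\}$ divergent in $X$. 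Continuity of the $T$-action yields $d_X(a_{t_i}x, b_{t_i}x) \to 0$, so $\{b_{t_i}x\}$ diverges as well: for any compact $K \subseteq X$, enlarge to a compact $K' \subseteq X$ with $K \subseteq \operatorname{int}(K')$; eventually $a_{t_i}x \notin K'$, forcing $b_{t_i}x \notin K$ for large $i$. A final appeal to Lemma \ref{lem: divergence} delivers the divergence of $\bar{A}x$.

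The main subtlety will be the bookkeeping around the three ambient sets $A$, $\bar{A}$, $G$ when speaking of divergence of sequences, together with verifying that a $1/t$-approximation in $G$ translates under the continuous orbit map to vanishing distance in $X$. Once a metric on $G$ is fixed these points are routine, and the real content of the proof lies in the shadowing construction.
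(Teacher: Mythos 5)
Your argument is correct and fills in what the paper treats as routine: the paper's own proof of this lemma is the single line ``By continuity of the $T$-action on $X$,'' so your job was to supply the continuity argument, and you did so in the natural way, via the sequential criterion of Lemma~\ref{lem: divergence} plus a shadowing argument using a left-invariant metric on $G$. The shadowing step is exactly the content the paper elides, and your treatment of it (choose $a_t\in A$ with $d(a_t,b_t)<1/t$, check $\{a_t\}$ still escapes compacta, then transfer divergence of $\{a_{t_i}x\}$ to $\{b_{t_i}x\}$ by enlarging the target compact) is sound; in particular the step $d_X(a_{t_i}x,b_{t_i}x)\to 0$ does go through once the metric on $G$ is chosen left-invariant, since then $d(a_{t_i}g_0,b_{t_i}g_0)=d\bigl(g_0,(a_{t_i}^{-1}b_{t_i})g_0\bigr)\to 0$ without needing to commute anything past an unbounded element.

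One small caution worth recording. In the ``free'' direction you justify that a divergent $\{a_t\}\subset A$ is divergent in $\bar{A}$ by saying compactness is inherited from $G$; this is only the right justification if ``divergent sequence'' is read throughout as \emph{escapes every compact subset of $G$} (equivalently of $T$, which is closed). If one instead reads it as escaping compact subsets of $A$ in the subspace topology, a sequence converging to a boundary point of $A$ would count as divergent in $A$ but not in $\bar{A}$, and indeed with that reading the lemma itself would be false (take $A$ bounded and not closed). So the $G$-interpretation is forced, and your proof is consistent with it; it would just be cleaner to state that convention explicitly rather than gesture at inherited compactness, since the two interpretations genuinely diverge when $A$ is not closed.
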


We have the following version of Dani's correspondence in our setting, which is proved in a similar way to \cite{Da}. 

\begin{theorem}\label{thm: Dani's correspondence}
Let $W$ be a set of weights and $x\in\R^d$ such that $w_{min}>0$. Then, $x$ is $W$-singular if and only if $\mathcal{A}_W^+\pi(u_x)$ diverges. 
\end{theorem}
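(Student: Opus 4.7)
The plan is to adapt the classical Dani correspondence (\cite{Da}) by combining a direct matrix computation with a uniform application of Mahler's compactness criterion over the set $W$. First I would compute, for $(p,q)\in\Z^d\times\Z$,
\[
a_{w,t}u_x\begin{pmatrix}-p\\ q\end{pmatrix}
=\bigl(e^{w_1 t}(qx_1-p_1),\,\ldots,\,e^{w_d t}(qx_d-p_d),\,e^{-t}q\bigr)^{T}.
\]
Choosing $Q=\eps e^{t}$ and using $\sum_i w_i=1$, the condition that this vector has sup-norm $\le\eps$ translates, up to harmless constants, into the existence of $(p,q)\in\Z^d\times\Z$ with $0<|q|\le Q$ and $\norm{qx-p}_w\le \delta/Q$, for a $\delta=\delta(\eps,w)$ that tends to $0$ with $\eps$ uniformly in $w$ provided $w_{\min}$ stays bounded away from $0$.

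Mahler's compactness criterion then says that $a_{w,t}\pi(u_x)$ lies outside a given precompact set in $\mathcal{L}_{d+1}$ iff the lattice $a_{w,t}u_x\Z^{d+1}$ contains a nonzero vector of sup-norm at most the corresponding Mahler constant. For the forward direction I would assume $x\in\sing(W)$, feed $\delta=\delta(\eps)$ into the definition to obtain a $Q_0$, and conclude that for every $t\ge\log(Q_0/\eps)$ and every $w\in W$ the lattice $a_{w,t}u_x\Z^{d+1}$ has a short nonzero vector; hence $\mathcal{A}_W^+\pi(u_x)$ eventually exits every compact set, which is exactly divergence in the sense of Definition \ref{def: div}. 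The converse reverses the same dictionary: divergence produces short nonzero vectors uniformly in $w\in W$ for all sufficiently large $t$, and the computation above translates these into integer solutions of the $W$-singular system.

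The main technical obstacle is uniformity in $w$: the translation $\eps\leftrightarrow\delta$ involves exponents $1/w_i$, which degenerate as $w_i\to 0$. Since $W$ is assumed to consist of proper weights, I would use Lemma \ref{lem: closure and iff} on the Diophantine side together with the continuity lemma preceding this theorem on the dynamical side to replace $W$ by $\overline{W}$, and then express all constants in terms of $w_{\min}(\overline{W})$ whenever it is positive. The remaining minor points---rounding between integer $Q$ and real $t\ge 0$, and passing between the normalizations $0<q\le Q$ and the nonzero short-vector condition---contribute only harmless multiplicative constants and can be absorbed into the final quantifier on $\delta$.
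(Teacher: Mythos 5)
Your proposal is correct and follows essentially the same route as the paper: the explicit computation of $a_{w,t}u_x\binom{-p}{q}$, the substitution $Q\asymp\eps e^t$, Mahler's compactness criterion applied uniformly over $w\in W$, and the observation that the $\eps\leftrightarrow\delta$ dictionary is controlled by $w_{\min}>0$ (with the closure reduction via Lemma \ref{lem: closure and iff}) are exactly the ingredients the paper uses, stated a bit more explicitly than in the paper's terse proof.
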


\begin{proof}

By taking $e^t:=\delta^{-1} Q$, Definition \ref{defn: singular} implies that $x$ is $W$-singular if and only if for every $\delta>0$ there exists $t_0>0$ such that for every $t> t_0$ and $w\in W$ there exists an integer solution $(p,q)\in\N^d\times \Z$ to the system of inequalities 
\begin{equation}\label{eq: div pf sing defn}
    \Vert qx-p\Vert_w\leq \frac{\delta}{e^{t}},\quad 0<q e^{-t}\leq \delta.
\end{equation}

Let $\delta_1:=\delta^{w_{min}}$. Then, for every $\delta_1>0,$ there exists $t_0>0$ such that for every $t>t_0$ and $w\in W$ we have $\delta(a_{w,t} \pi(u_x))<\delta_1$. By Mahler’s compactness criterion (see \cite[\S V]{cassels2012}), the orbit $\mathcal{A}_W^+\pi(u_x)$ diverges. The other direction of the claim follows by following the same arguments in the opposite direction.  
\end{proof}

\begin{remark}
If one wants to consider non-proper weights in Theorem \ref{thm: Dani's correspondence}, then another condition needs to be added to Definition \ref{defn: singular} as follows.  
$\mathcal{A}_W^+\pi(u_x)$ diverges if and only if for every $\delta>0$ there exists $Q_0>0$ such that for every $Q>Q_0$ and $w\in W$ there exists an integer solution $(p,q)\in\Z^d\times\N$ to the system of inequalities 
\begin{equation}
    \norm{qx-p}_{w}\le\frac{\delta}{Q},\quad 0<q\le Q. 
\end{equation}
and for all $1\le i\le d$ such that $w_i=0$, \[
|qx_i-p_i|<\delta. \]

In particular, when considering a set of weights $W$ with non-proper closer, $\mathcal{A}_W^+\pi(u_x)$ is a stronger condition than $x$ being $W$-singular. Thus, we get that for any set of weights, if the orbit $\mathcal{A}_W^+\pi(u_x)$ diverges, then that $x$ is $W$-singular.  
\end{remark}

Next, we show the dynamical interpretation of the $W$-uniform exponent, following ideas in \cite[Thm 3.3]{DFSU}. In what follows, \begin{equation}\label{defn:delta}
\delta(\Lambda):=\min_{0\neq v\in\Lambda}\norm{v}, \Lambda\in \mathcal{L}_d. \end{equation}

The next result shows that it can be used to estimate the $W$-uniform exponent of $x$. 

\begin{theorem}\label{thm: Dani's correspondence:unirofm}
Let $x\in\R^d$. Then 
\begin{equation}\label{eqn:Dani Uni}
\frac{\hat{\tau}_W(x)+w_{max}}{(1-\hat{\tau}_W(x))w_{max}}\le \hat\sigma_W(x)\leq \frac{\hat{\tau}_W(x)+w_{min}}{(1-\hat{\tau}_W(x)) w_{min}} .
\end{equation}
\end{theorem}
\begin{remark}\label{use_critical}
    Note that the right hand inequality of \eqref{eqn:Dani Uni} is nontrivial when $w_{min}>0$, otherwise it gives $\hat\sigma_W(x)\leq \infty.$ Also note that when $w_{min}>0,$ $\hat\tau_{W}(x)=0$ iff $\hat\sigma_{W}(x)=1.$
\end{remark}

\begin{remark}   
Note that $\hat\tau_W(x)\le 1$ implies that \[\frac{\hat{\tau}_W(x)+w_{max}}{(1-\hat{\tau}_W(x))w_{max}}\le \frac{\hat{\tau}_W(x)+w_{min}}{(1-\hat{\tau}_W(x)) w_{min}} .\] 
Note also that for $w=(1/d,\dots,1/d)$ we have $w_{\min}=w_{\max}=1/d$, and so in this case we get the same conclusion as in \cite[Thm 3.3]{DFSU}. 
\end{remark}

\begin{proof}[Proof of Theorem \ref{thm: Dani's correspondence:unirofm}]
For simplicity, let $\sigma:=\hat\sigma_W(x)$ and \begin{equation}\label{tau_sigma_relation}\tau:=\frac{(\sigma-1)w_{\min}}{(1+
\sigma w_{\min})}.\end{equation} Note that $\sigma\ge 1$ and $w_{\min}\ge0$ imply $0\le\tau\le1$. 

By the definition of $\hat{\sigma}_W(x)$ and for any $\delta>0$, there exists $Q_0>0$ such that for every $Q> Q_0$ and $w\in W$ there exists $(p,q)\in\N^d\times \Z$ such that 
\begin{equation*}
    \Vert qx-p\Vert_w\leq \frac{1}{Q^{\sigma-\delta}},\quad 0<q \leq Q.
\end{equation*} 
If $\tau<1$ fix $\varepsilon=0$, otherwise let $0<\varepsilon<1$. Let $t$ be such that $e^{-t} Q= e^{-(\tau-\varepsilon) t}$, i.e. $Q=e^{t(1-\tau+\varepsilon)}$. In particular, for any $i$ we have \[
e^{w_it}\vert qx_i-p_i\vert<\frac{e^{w_i t}}{Q^{w_i(\sigma-\delta)}}=e^{-[(\sigma-\delta)(1-\tau+\varepsilon)-1]w_it}.\]
Hence, any $t>t_0:=(1-\tau+\varepsilon)^{-1}\log Q_0$ and $w\in W$ satisfy
$$\delta(a_{w,t} \pi(u_x))\leq \max_{1\le i\le d}\left\{e^{-[(\sigma-\delta)(1-\tau+\varepsilon)-1]w_it}, e^{-(\tau -\eps)t} \right\}.$$
Thus, we have $\hat\tau_W(x)\geq \inf_{w\in W}\min_{1\le i\le d}\{((\sigma-\delta)(1-\tau+\varepsilon)-1)w_{i}, \tau-\eps\}$. Since we can take $\eps,\delta$ as small as we want, we get \[
\hat\tau_W(x)\geq \inf_{w\in W}\min_{1\le i\le d}\{(\sigma(1-\tau)-1)w_{i}, \tau\}=\tau,\]
where the last equality holds by the definition of $\tau$. This shows the right hand side inequality in \eqref{eqn:Dani Uni}.

Next, let $\tau_1:=\hat\tau_W(x)$. Then, for any $\varepsilon>0$ there exists $t_0>0$ such that for any $t>t_0$ and any $w\in W$, \[
\delta(a_{w,t} \pi(u_x))< e^{-(\tau_1-\varepsilon) t}.\] 
This implies that there exists a non-zero integer vector $(p,q)\in\N^d\times\Z$ such that $\vert q\vert \leq e^{t(1-\tau_1+\varepsilon)}$ and  for all $i$\[
e^{w_it}\vert qx_i-p_i\vert<e^{-(\tau_1-\varepsilon) t}, \]
which is equivalent to \[
\vert qx_i-p_i\vert<e^{-(\tau_1-\varepsilon+w_i) t}.\]
Let $Q:=e^{t(1-\tau_1+\varepsilon)}$.  Then $q\leq Q$ and  
\begin{align*}
    \max_{1\le i\le d}\left\{\vert qx_i-p_i\vert^{1/w_i}\right\}&<\max_{1\le i\le d}\left\{e^{-(\tau_1-\varepsilon+w_i) t/w_i}\right\}\\
    &=\max_{1\le i\le d}\left\{Q^{-\frac{\tau_1-\varepsilon}{w_i(1-\tau_1+\varepsilon)}-\frac{1}{1-\tau_1+\varepsilon}}\right\}\\
    &=Q^{-\frac{\tau_1-\varepsilon}{w_{\max}(1-\tau_1+\varepsilon)}-\frac{1}{1-\tau_1+\varepsilon}}.
\end{align*}
This implies $\hat\sigma_W(x)\geq \frac{1+(\tau_1-\varepsilon)/w_{\max} }{1-\tau_1+\varepsilon}$ for every $\varepsilon>0$. Hence,  $\hat\sigma_W(x)\geq \frac{1+\tau_1/w_{max} }{(1-\tau_1)}$. 
\end{proof}

\begin{remark}\label{rem: Dani's correspondence}
Let $w$ be a weight on $\R^d$. Define 
$\hat\tau'_{w}$ similarly to $\hat\tau_{w}$ (see Definition \ref{tauhatset-dynamics}), by replacing $\delta$ with \[ 
\delta_w(\Lambda):=\min_{0\neq v\in\Lambda}\max\{\Vert(v_1,\dots,v_d)\Vert_{w}, \vert v_{d+1}\vert \},\text{ for any lattice }\Lambda\subset\R^{d+1}.\] 
Then, repeating the arguments in the proof of Theorem \ref{thm: Dani's correspondence:unirofm}, one may deduce \[
\hat{\sigma}_w(x)=\frac{1+\hat{\tau'}_w(x)}{1-\hat{\tau'}_w(x)},\]
or equivalently \[
\hat{\tau'}_w(x)=\frac{\hat{\sigma}_w(x)-1}{\hat{\sigma}_w(x)+1}.\]
\end{remark}


\section{Inheritance of $W$--Singular vectors and $W$-uniform exponent $\hat\tau_W$}
Let $W\subset (0,1)^d$ be a set of weights that satisfies $w_{min}>0$, which will be our assumption for the rest of the section. Let $u_x$ for $x\in\R^d$ and $\mathcal{A}_{w}^{+}$ are as in  \S \ref{Dani}.
We denote $g^w_t=\operatorname{diag}(e^{w_1t},\cdots, e^{w_dt}, e^{-t}),$ 
which means $\mathcal{A}_{w}^{+}=\{g^w_t, t\geq 0\}.$ By Theorem \ref{thm: Dani's correspondence}, for any proper weight $w$, $x$ is $w$--singular if and only if  $g^w_t \pi(u_x)\to \infty$ as $t\to\infty.$


 Let us recall Theorem $2.2$ from \cite{Kleinbock-exponent}, which is an improvement to one of the main theorems in \cite{KM}. The following theorem is referred as quantitative nondivergence as this quantifies the nondivergence results of Margulis in \cite{mar1}. Readers are referred to \cite{KM} and \cite{Kleinbock-exponent} and \cite{KLW} for the definition of Besicovitch space, Federer measure, and good maps.

Using \cite[Theorem 2.2]{Kleinbock-exponent} and Theorem \ref{thm: Dani's correspondence}, we have the following theorem:
   \begin{theorem}\label{Thm_sing}
  Let $X$ be a Besicovitch space, $B=B(x,r)\subset X$ a ball, $\mu$ be a Federer measure on $X$, and suppose that $f:\tilde{B}\to \R^n$ is a continuous map. Suppose that the following two properties are satisfied.
	\begin{enumerate}
		\item\label{c11} For every $w\in W$ and $k>0$, there exists $C,\alpha>0$ such that all the functions $x\to \cov( g^w_ku_{f(x)}\Gamma)$, $\Gamma\in\mathfrak{P}({\Z,n+1})$, are $(C,\alpha)$ good on $\tilde B$ w.r.t. $\mu$;
		\item\label{c22} There exists $c>0$ and $k_i\to \infty$ such that for some $w=w(c,i)\in W$ and any $\Gamma\in\mathfrak{P}({\Z,n+1})$ one has 
		\begin{equation}\label{2nd_cond_R}
		\sup_{B\cap\supp{\mu}}\cov( g^w_{k_i}u_{f(x)}\Gamma)\geq c^{\rk({\Gamma})}.
		\end{equation}
	\end{enumerate}
Then $$\mu\{x\in B~|~f(x) \text{ is $W$-singular }\}=0.$$
\end{theorem}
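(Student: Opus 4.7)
The plan is to combine Dani's correspondence (Theorem \ref{thm: Dani's correspondence}) with the quantitative nondivergence estimate of Kleinbock--Margulis in its refined form \cite[Theorem 2.2]{Kleinbock-exponent}, to which conditions \ref{c11} and \ref{c22} are precisely tailored. The scheme mirrors the unweighted setting of \cite{KM,Kleinbock-exponent}; the new feature here is that $W$-singularity simultaneously constrains the whole family $\{g^w_t\}_{w\in W}$ of one-parameter flows, so the quantitative estimate must be invoked along the specific sequence provided by \ref{c22}.

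First I would translate the Diophantine condition into dynamics. By Theorem \ref{thm: Dani's correspondence} together with Mahler's compactness criterion, $f(x)$ is $W$-singular precisely when for every $\delta>0$ there is a $T>0$ such that for every $t>T$ and every $w\in W$ the lattice $g^w_t u_{f(x)}\Z^{n+1}$ contains a nonzero vector of sup-norm less than $\delta$. Specializing to the sequence $k_i\to\infty$ and the weights $w_i:=w(c,i)\in W$ supplied by \ref{c22}, this yields the inclusion
\begin{equation*}
\{x\in B: f(x)\in\sing(W)\}\;\subseteq\;\bigcap_{\delta>0}\liminf_{i\to\infty}\bigl\{x\in B:\delta(g^{w_i}_{k_i}u_{f(x)}\Z^{n+1})<\delta\bigr\}.
\end{equation*}

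Next I would invoke quantitative nondivergence for each index $i$. For fixed $i$, hypothesis \ref{c11} with $w=w_i$ supplies the $(C_i,\alpha_i)$-good property on every covolume function $x\mapsto\cov(g^{w_i}_{k_i}u_{f(x)}\Gamma)$, $\Gamma\in\mathfrak{P}(\Z,n+1)$, while \ref{c22} at time $k_i$ supplies the required lower bound \eqref{2nd_cond_R}. Thus \cite[Theorem 2.2]{Kleinbock-exponent} produces a constant $C'_i>0$ with
\begin{equation*}
\mu\bigl(\{x\in B:\delta(g^{w_i}_{k_i}u_{f(x)}\Z^{n+1})<\delta\}\bigr)\;\le\;C'_i\,\delta^{\alpha_i}\,\mu(B)
\end{equation*}
for all sufficiently small $\delta>0$. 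Combining with the set-theoretic inclusion above and Fatou's lemma for sets gives
\begin{equation*}
\mu\bigl(\{x\in B: f(x)\in\sing(W)\}\bigr)\;\le\;\liminf_{i\to\infty}C'_i\,\delta^{\alpha_i}\,\mu(B),\qquad\text{for every }\delta>0,
\end{equation*}
and letting $\delta\to 0$ forces the left-hand side to vanish.

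The main obstacle I anticipate is controlling the goodness constants $(C_i,\alpha_i)$ uniformly in $i$, since \ref{c11} as stated only provides them separately for each $w\in W$. My proposal is to pass to a subsequence along which $w_i$ converges in $\overline W$ and to exploit that the $(C,\alpha)$-good property is preserved under the uniform limits of the polynomial majorants of $x\mapsto\cov(g^w_k u_{f(x)}\Gamma)$, so that a single pair $(C',\alpha)$ controls the whole tail. When $W$ is finite this step is immediate by the pigeonhole principle; in general it reduces the displayed bound to $C'\delta^{\alpha}\mu(B)$, after which the theorem follows as described above.
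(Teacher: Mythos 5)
Your argument is essentially the paper's proof: both pass to the dynamical reformulation via Dani's correspondence, apply \cite[Theorem 2.2]{Kleinbock-exponent} along the sequence $(k_i, w(c,i))$ supplied by hypothesis (2), observe that the set of $x$ with $f(x)$ $W$-singular is contained in the tail intersection (equivalently, the $\liminf$) of the small-$\delta$ sublevel sets, and let the scale tend to zero. The uniformity of the $(C,\alpha)$-good constants across $w\in W$ that you flag is also assumed without comment in the paper's written proof, which uses a single $\alpha$ in the bound $\ll\varepsilon^\alpha\mu(B)$ even though hypothesis (1) only provides $(C,\alpha)$ per weight; so your concern is legitimate but does not represent a divergence from, or gap relative to, the paper's own argument.
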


\begin{proof}
By \cite[Theorem 2.2]{Kleinbock-exponent} for any $1>\varepsilon >0$ we have the following for every $i.$
	\begin{equation*}\label{borel_cantelli}
 \begin{aligned}
	&\mu\left(\bigg\{x\in B\left | \delta (g^{w(c,i)}_{k_i} \pi(u_{f(x)}))<\varepsilon c\right.\bigg\}\right)\\
	&\ll \varepsilon^\alpha \mu(B), \text{ where the implied constant depends on $X, n, \mu$ and $C$,}\\
	&= E  \varepsilon^\alpha.
	\end{aligned}\end{equation*}
 
Hence for every $i$, $\mu\left(\bigg\{x\in B\left | \delta (g^w_{k_i} \pi(u_{f(x)}))<\varepsilon c ~~\forall w\in W\right.\bigg\}\right)\ll \varepsilon^\alpha.$ Since $k_i\to\infty$, for any $1>\varepsilon>0$, and for all large $N$,
$$\mu\left( \bigcap_{k\geq N}\bigg\{x\in B\left | \delta (g^w_{k} \pi(u_{f(x)}))<\varepsilon c ~~\forall w\in W\right.\bigg\}\right)\ll \varepsilon^\alpha.$$
Thus the conclusion of this theorem follows.
    
\end{proof}
\begin{proposition}\label{p2}
Let us take the same notations as in Theorem \ref{Thm_sing}. If Condition \ref{2nd_cond_R} in Theorem \ref{Thm_sing} does not hold, then $f(\supp{\mu}\cap B)$ is contained in the set of $W$--singular vectors.
\end{proposition}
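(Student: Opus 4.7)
The plan is to unpack the failure of condition \eqref{2nd_cond_R} into a uniform statement about divergence of the $\mathcal{A}_W^+$-orbit of each $\pi(u_{f(x)})$, and then invoke Dani's correspondence (Theorem \ref{thm: Dani's correspondence}).

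First I would rewrite the negation of \eqref{2nd_cond_R} in a usable form. Literally, it says that for every $c>0$, every sequence $k_i\to\infty$ and every choice $w(c,i)\in W$ there exist an index $i$ and a primitive sublattice $\Gamma$ with $\sup_{x\in B\cap\supp\mu}\cov(g^{w(c,i)}_{k_i}u_{f(x)}\Gamma)<c^{\rk\Gamma}$. A contrapositive argument — if the strengthened statement below failed, one could inductively select times $k_i\to\infty$ and weights $w_i\in W$ at which the opposite inequality held for every $\Gamma$, producing a witness to \eqref{2nd_cond_R} — upgrades this to: for every $c>0$ there is $T=T(c)$ such that for every integer $k\ge T$ and every $w\in W$ there exists $\Gamma=\Gamma(c,k,w)\in\mathfrak{P}(\Z,n+1)$ with
\[
\sup_{x\in B\cap\supp\mu}\cov(g^w_k u_{f(x)}\Gamma)<c^{\rk\Gamma}.
\]

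Next I would fix $x\in B\cap\supp\mu$. For every $c>0$, every integer $k\ge T(c)$, and every $w\in W$, the unimodular lattice $g^w_k u_{f(x)}\Z^{n+1}$ admits a primitive sublattice $g^w_k u_{f(x)}\Gamma$ of some rank $j$ with covolume strictly less than $c^j$. Minkowski's first theorem, applied inside the real span of this sublattice, yields a nonzero vector of $g^w_k u_{f(x)}\Z^{n+1}$ of length at most $C_n c$, where $C_n$ depends only on $n$; in particular $\delta(g^w_k\pi(u_{f(x)}))\le C_n c$. Passing from integer to real time costs only a uniform multiplicative constant, since the operator norms of $g^w_t(g^w_{\lfloor t\rfloor})^{-1}$ are bounded for $t-\lfloor t\rfloor\in[0,1]$ and $w\in\overline W$ (which is compact, and by Lemma \ref{lem: closure and iff} defines the same class of singular vectors). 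Therefore $\delta(g^w_t\pi(u_{f(x)}))\to 0$ as $t\to\infty$, uniformly in $w\in W$.

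By Mahler's compactness criterion, this uniform decay of $\delta$ implies that $\mathcal{A}_W^+\pi(u_{f(x)})$ eventually leaves every compact set, so the orbit diverges in the sense of Definition \ref{def: div}. Since $w_{\min}>0$ guarantees that $W$ consists of proper weights, Dani's correspondence (Theorem \ref{thm: Dani's correspondence}) then gives that $f(x)$ is $W$-singular. As $x\in B\cap\supp\mu$ was arbitrary, $f(B\cap\supp\mu)\subset\sing(W)$, proving the proposition. The main delicate step is the very first one: one must upgrade the raw negation — which only forbids a single witnessing sequence — to the uniform ``for all sufficiently large $t$'' form; the remaining steps are routine lattice geometry together with the dictionary set up in Section \ref{sec: dani's correspondence}.
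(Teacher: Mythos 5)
Your proof is correct and follows essentially the same route as the paper: negate Condition \eqref{2nd_cond_R} to obtain a uniform-in-$k$ and uniform-in-$w$ statement (the paper asserts this form directly; you supply the contrapositive argument justifying the upgrade), convert small covolume of a sublattice into a short lattice vector via Minkowski so that $\delta(g^w_k\pi(u_{f(x)}))$ is uniformly small for large $k$, and conclude via Mahler's criterion and Dani's correspondence (Theorem \ref{thm: Dani's correspondence}). You fill in a few details the paper elides — the explicit Minkowski constant and the passage from integer to real time — but these do not change the substance of the argument.
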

\begin{proof}
	If the second condition does not hold then for every $c>0$, there exists $k_0>0$ such that for every $k>k_0$, and all $w\in W,$ there exists $\Gamma\in \mathfrak{P}({\Z,n+1})$ such that $$
		\sup_{B\cap\supp{\mu}}\cov (g^w_k u_{f(x)}\Gamma)<c^{\rk(\Gamma)}.
		$$ Hence for $\mu$-almost every $x\in B$, for every $c>0$, there exists $k_0>0$ such that for all large $k>k_0$, and for all $w\in W,$
  $$\delta(g^w_k \pi(u_{f(x)}))\leq \cov( g^w_k u_{f(x)}\Gamma )^{\frac{1}{\rk(\Gamma)}}\implies \delta(g^w_k \pi(u_{f(x)}))< c.$$ Now using Theorem \ref{thm: Dani's correspondence}, we can conclude that for $\mu$-almost every $x\in B$ we have $f(x)$ to be $W$--singular.
\end{proof}
\subsection{Covolume calculation}
Let us denote the set of rank $j$  submodules of ${\Z}^{n+1}$ as $\mathcal{S}_{n+1, j}$.

Note that  $u_{x}$ leaves $\be_i\in\R^{n+1}$ invariant for $i=0,\cdots, n-1$ and sends $\be_n$ to $\sum_{i=1}^n x_i\be_{i-1}+\be_n$.
Therefore the subspace $\mathcal{V}^0_n=\{(v_0,\cdots, v_n)~|~ v_n=0\}$ is invariant under $u_{x}.$ Also, $g_t^w\be_i=e^{w_{i+1}t}\be_i$ for $i=0,\cdots, n-1$, and $g_t^w \be_n= e^{-t}\be_n$. Suppose $\mathcal{V}=\R^{n+1}$, and we consider $\bigwedge^j \mathcal{V}$ for $j\geq 1.$ 
Let $\mathbf{v}=\sum v_I \be_I\in \bigwedge^j\Z^{n+1}.$ We can write  $\mathbf{v}= \mathbf{v}_0\wedge (q\be_n-(\mathbf{p},0)),$ where $q\in\Z$, $\mathbf{p}\in\Z^{n}$ and $\mathbf{v}_0\in \bigwedge^{j-1}\Z^{n+1}\cap \bigwedge^{j-1}\mathcal{V}_n^0.$ It can be shown from above that 
$g_t^w u_x\mathbf{v}= g^w_t(\mathbf{v}_0\wedge (q x-\mathbf{p},0)+  qg^w_t(\mathbf{v}_0\wedge \be_n).$
For any subspace $\mathcal{H}$, we denote $d_{\mathcal{H}}$ the distance function from the subspace.
Now if $\mathbf{v}\in \bigwedge^j\Z^{n+1}$ represents $\Gamma\in \mathfrak{P}({\Z,n+1})$, then 
$$\cov(g_t^w u_{f(x)}\Gamma)\asymp\max\{ \Vert g^w_t(\mathbf{v}_0\wedge (q f(x)-\mathbf{p},0)\Vert, \vert q \vert \Vert g^w_t(\mathbf{v}_0\wedge \be_n)\Vert\}.$$
Now for $q\neq 0,$ $$\begin{aligned}
    \Vert g_t^w\mathbf{v}_0\wedge g_t^w(q f(x)-\mathbf{p},0)\Vert &=
\Vert g_t^w\mathbf{v}_0\Vert d_{g_t^w\mathcal{H}} (g_t^w(qf(x)-\mathbf{p},0) )\\&= \vert q\vert \Vert g_t^w\mathbf{v}_0\Vert d_{g_t^w(\mathcal{ H}+{(\mathbf{p},0)}/q)} (g_t^w(f(x),0)) .
\end{aligned}$$ Here $\mathcal{H}$ is the subspace of $\mathcal{V}_0$, that corresponds to $\mathbf{v}_0.$ Let $e^{\gamma t}$ is the smallest eigenvalue of $\{g_t^w, w\in W\}$ on $\bigwedge^j \mathcal{V}_0.$ Since $W$ satisfies $w_{min}>0$, we can guarantee $\gamma>0.$ Let us denote $\mathcal{H}(\mathbf{v}):=\mathcal{H}+(\mathbf{p},0)/q.$
Hence 
\begin{equation}\cov(g_t^w u_{f(x)}\Gamma)\geq e^{\gamma t}  \vert q\vert d_{\mathcal{H}(\mathbf{v})}(f(x), 0).\end{equation}


Note that from the above discussion, we can rewrite Condition $(2)$ in Theorem \ref{Thm_sing} as the following:
     There exists $c>0$ and $k_i\to \infty$ such that for some $w=w(c,i)\in W$ and any $\mathbf{v}= \mathbf{v}_0\wedge (q\be_n-(\mathbf{p},0))\in \bigwedge^j\Z^{n+1},$ where $\mathbf{v}_0\in \bigwedge^j\mathcal{V}_0, (\mathbf{p},q)\in\Z^{n+1},$ one has 
		\begin{equation}
		\sup_{x\in B\cap\supp{\mu}}\max\{ \vert q\vert \Vert g_{k_i}^w\mathbf{v}_0\Vert d_{g_{k_i}^w(\mathcal{H}+(\mathbf{p},0)/q)} (g_{k_i}^wf(x)), \vert q \vert \Vert g^w_{k_i}(\mathbf{v}_0\wedge \be_n)\Vert\}\geq c^{j}.
		\end{equation}


Let $U$ be an open bounded ball in $\R^d$ and $f:U \subset \R^d\to \mathcal{L}\subset \R^n$ be a continuous map. Without loss of generality, we have $f(U)\subset (0,1)^n.$ Let $\dim\mathcal{L}=s$, Suppose $h:\R^s\to\mathcal{L}\subset \R^n$ be an affine isomorphism, and $h(x)=R x$, where $R$ is a $n\times s$ matrix. Let $g:=h^{-1}\circ f: U \to \R^s$. Since $f$ is nondegenerate in $\mathcal{L}$, $g$ is nondegenerate in $\R^s$. Let us denote $S:= R^{-1}(0,1)^{n}$, which is a simplex in $\R^s$.

\begin{lemma} Let $A$ be a matrix in $GL_{n}(\R)$, $\mathcal{H}$ be an affine subspace in $\R^n$, $M>0$ and $B\subset  U$  be a bounded ball. Then there exists a $c_{g,B}>0$, that depends on both $g$ and $B$, such that,
\begin{equation}\label{Condition fR}
\sup_{y\in S } d_{\mathcal{H}} (A Ry)\geq c_{g, B} M\implies\sup_{x\in B} d_{\mathcal{H}} ( A f(x))\geq M.
\end{equation}
and 
\begin{equation}\label{Condition Rf}
\sup_{x\in B} d_{\mathcal{H}} (A f(x))\geq M \implies \sup_{y\in S } d_{\mathcal{H}} (A Ry)\geq M.
\end{equation}

\end{lemma}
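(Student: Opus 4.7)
The plan is to recognize both sides of \eqref{Condition fR} and \eqref{Condition Rf} as supremums of one and the same function evaluated on two different sets, and then to invoke the equivalence of norms on a finite-dimensional space of affine maps.

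First I would linearize the distance. Fix a base point $h_0 \in \mathcal{H}$ and let $\Pi:\R^n \to \R^n$ be the orthogonal projection onto the orthogonal complement of the linear part of $\mathcal{H}$, so that $d_{\mathcal{H}}(v) = \|\Pi(v - h_0)\|$ for every $v \in \R^n$. Then
$$L_{\mathcal{H},A}(y) := \Pi(ARy - h_0)$$
defines an affine map $L_{\mathcal{H},A}:\R^s \to \R^n$ with $d_\mathcal{H}(ARy) = \|L_{\mathcal{H},A}(y)\|$. The easy implication \eqref{Condition Rf} is then immediate: from $f(U) \subset (0,1)^n \cap \mathcal{L}$ one gets $g(U) = h^{-1}(f(U)) \subset R^{-1}((0,1)^n) = S$, so $g(B) \subset S$ and
$$\sup_{x\in B} d_{\mathcal{H}}(Af(x)) = \sup_{x\in B}\|L_{\mathcal{H},A}(g(x))\| \le \sup_{y\in S}\|L_{\mathcal{H},A}(y)\| = \sup_{y\in S} d_\mathcal{H}(ARy).$$

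For \eqref{Condition fR}, let $V$ denote the finite-dimensional $\R$-vector space of all affine maps $L:\R^s \to \R^n$, and put on $V$ the two seminorms
$$\|L\|_S := \sup_{y \in \overline{S}} \|L(y)\|, \qquad \|L\|_B := \sup_{x \in \overline{B}} \|L(g(x))\|.$$
The first is a norm because $S$ is a nonempty open subset of $\R^s$, so any affine $L$ that vanishes on $S$ vanishes identically. For $\|\cdot\|_B$, I would verify that $g$ is non-planar on $B$, i.e.\ that $g(B)$ is not contained in any proper affine subspace of $\R^s$: if some nonzero affine functional $\phi:\R^s \to \R$ satisfied $\phi \circ g \equiv 0$ on $B$, then pick a point $x_0 \in B$ at which $g$ is nondegenerate (such points exist since nondegeneracy holds $\lambda_U$-almost everywhere) and apply the chain rule iteratively; this forces every partial derivative of $g$ at $x_0$, of every order, to lie in $\ker d\phi$, a proper subspace of $\R^s$, contradicting nondegeneracy. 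Hence any affine $L$ with $\|L\|_B = 0$ vanishes on $g(B)$ and therefore on its affine hull $\R^s$, so $L = 0$ and $\|\cdot\|_B$ is a norm.

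By the equivalence of norms on the finite-dimensional space $V$, there exists $c_{g,B} \ge 1$ depending only on $g$ and $B$ such that $\|L\|_S \le c_{g,B}\|L\|_B$ for every $L \in V$. Applied to $L = L_{\mathcal{H},A}$ this gives
$$\sup_{y\in S} d_\mathcal{H}(ARy) \le c_{g,B}\sup_{x\in B} d_\mathcal{H}(Af(x)),$$
which is exactly the contrapositive form of \eqref{Condition fR}. The main subtle point is that $c_{g,B}$ must be uniform over all pairs $(\mathcal{H},A)$; this is automatic, since every $L_{\mathcal{H},A}$ is simply an element of the single finite-dimensional space $V$, while the norm-equivalence constant depends only on the two seminorms $\|\cdot\|_S,\|\cdot\|_B$, i.e.\ only on the geometry of $g$ on $B$ and on the simplex $S$.
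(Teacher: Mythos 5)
Your proof is correct, and it actually handles a point that the paper's own argument glosses over. Both proofs hinge on the same essential observation — nondegeneracy of $g$ forces $g(B)$ to affinely span $\R^s$, so a bound on $g(B)$ propagates to a bound on all of $S$ — but you package it as equivalence of two norms on the finite-dimensional space $V$ of affine maps $\R^s\to\R^n$, whereas the paper argues more ad hoc: it asserts that $d_{\mathcal H}=|\phi_{\mathcal H}|$ for an affine map $\phi_{\mathcal H}$, extracts an affine basis $\{g(x_1),\dots,g(x_s)\}\subset g(B)$, and then simply states the bound $d_{\mathcal H}(ARy)\le c_{g,B}\sup_{x\in B}d_{\mathcal H}(ARg(x))$ without further justification. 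Your phrasing has two concrete advantages. First, $d_{\mathcal H}$ is $|\phi_{\mathcal H}|$ for a scalar-valued affine $\phi_{\mathcal H}$ only when $\mathcal H$ is a hyperplane; for a general affine subspace one needs $d_{\mathcal H}(v)=\|\Pi(v-h_0)\|$ with $\Pi$ a projection onto a higher-codimension complement, which is exactly what your $L_{\mathcal H,A}$ encodes, so your argument covers the lemma as actually stated. Second, the uniformity of $c_{g,B}$ over all pairs $(\mathcal H, A)$ — which is what makes the lemma usable in Proposition \ref{Independent condition} — falls out automatically in your framework, since the norm-equivalence constant is intrinsic to the two seminorms $\|\cdot\|_S$ and $\|\cdot\|_B$ on $V$ and has nothing to do with the particular element $L_{\mathcal H,A}\in V$ being measured; in the paper's version this uniformity is only implicit. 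Your verification that $\|\cdot\|_B$ is a norm — differentiating $\phi\circ g\equiv 0$ repeatedly at a nondegenerate point of $B$ to trap all partials of $g$ in the proper subspace $\ker d\phi$ — is the standard and correct way to see that nondegeneracy implies nonplanarity, which the paper shortcuts by citing that $1,g_1,\dots,g_s$ are linearly independent. The one small thing worth making explicit is that $\overline S$ is compact (because $R$ has full column rank $s$, so $R^{-1}(0,1)^n$ is bounded) and $g$ extends continuously to $\overline B$, which guarantees both seminorms are finite; but that is implicit in the paper's setup as well.
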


\begin{proof}
Note
    $$
    \sup_{x\in B} d_{\mathcal{H}} (A f(x))=\sup_{x\in B} d_{\mathcal{H}} (A R g(x))= \sup_{y\in g(B)} d_{\mathcal{H}} (A R y) .
    $$

 Suppose $\sup_{x\in B} d_{\mathcal{H}} (A f(x))< M$.  Since $g=(g_1,\cdots,g_s)$ is nondegenerate in $\R^s,$ $1, g_1,\cdots, g_s$ are linear independent over $\R.$ This implies $g(B)=\{g(x)~|~x\in B\}$ contains a basis of $\R^s$, say $\{g(x_1),\cdots, g(x_s)\}$, where $x_1,\cdots, x_s\in B$. By definition $g(B)\subset S.$ For any $y\in S,$ $d_{\mathcal{H}}(A Ry)\leq c_{g, B} \sup_{x\in B} d_{\mathcal{H}} (A R g(x))< c_{g,B} M.$ The last inequality uses the fact that $d_{\mathcal{H}}$ is $\vert \phi_{\mathcal{H}}\vert$, where $\phi_{\mathcal{H}}$ is an affine map. Here $c_{g,B}$ depends on both $g$ and $B.$ The second claim follows because $g(B)\subset S$.


\end{proof}

We have the following proposition, which follows from Lemma \ref{Condition Rf} and the discussion above that.

\begin{proposition}\label{Independent condition}
    Suppose $\mu=\lambda_d$ is the Lebesgue measure in $\R^d$. Condition $(2)$ in Theorem \ref{Thm_sing} is equivalent to the following:
     There exists $c>0$, $k_i\to \infty$ such that for some $w=w(c,i)\in W$ and any $\mathbf{v}\in \bigwedge^j\Z^{n+1}$ one has 
		\begin{equation}\label{Condition 2 in R}
		\sup_{y\in S}\max\{ \vert q\vert \Vert g_{k_i}^w\mathbf{v}_0\Vert d_{g_{k_i}^w(\mathcal{H}+(\mathbf{p},0)/q)} (g_{k_i}^w Ry), \vert q \vert \Vert g^w_{k_i}(\mathbf{v}_0\wedge \be_n)\Vert\}\geq c^{j}.
		\end{equation}
	where $\mathbf{v}= \mathbf{v}_0\wedge (q\be_n-(\mathbf{p},0)),$ with $\mathbf{v}_0\in \bigwedge^j\mathcal{V}_0, (\mathbf{p},q)\in\Z^{n+1}$,  $\mathcal{H}$ is the subspace associated with $\mathbf{v}_0$. 
\end{proposition}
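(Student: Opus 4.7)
The plan is to combine the covolume identity for $\cov(g_{k_i}^w u_{f(x)}\Gamma)$ derived immediately above the proposition with the two-sided comparison between $\sup_{x\in B}$ and $\sup_{y\in S}$ provided by the preceding lemma, using that $\mu = \lambda_d$ forces $B\cap\supp\mu = B$.

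First I would rewrite Condition~(2) of Theorem~\ref{Thm_sing} in the form already produced in the covolume discussion: for any $\Gamma \in \mathfrak{P}(\Z,n+1)$ represented by $\mathbf{v} = \mathbf{v}_0 \wedge (q\be_n - (\mathbf{p},0))$ one has
\[
\cov\bigl(g_{k_i}^w u_{f(x)}\Gamma\bigr) \asymp \max\bigl\{|q|\,\|g_{k_i}^w \mathbf{v}_0\|\, d_{g_{k_i}^w(\mathcal{H}+(\mathbf{p},0)/q)}(g_{k_i}^w f(x)),\ |q|\,\|g_{k_i}^w(\mathbf{v}_0\wedge\be_n)\|\bigr\}.
\]
Absorbing the implicit constant in $\asymp$ into the choice of $c$, Condition~(2) becomes a bound of the form $\sup_{x\in B}\max\{F_1(x), F_2\} \geq c^j$, where $F_1(x)$ depends on $x$ only through $f(x)$ and $F_2$ is independent of $x$.

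Next I would invoke the factorization $f = R\circ g$ and apply the preceding lemma with $A = g_{k_i}^w$ and affine subspace $g_{k_i}^w(\mathcal{H}+(\mathbf{p},0)/q)$. Since $F_2$ is a constant in $x$ and $y$, pushing the supremum through the max yields
\[
\sup_{x\in B}\max\{F_1(x), F_2\} = \max\Bigl\{\sup_{x\in B} F_1(x),\; F_2\Bigr\},\quad \sup_{y\in S}\max\{F_1'(y), F_2\} = \max\Bigl\{\sup_{y\in S} F_1'(y),\; F_2\Bigr\},
\]
where $F_1'(y)$ is the analogue of $F_1$ with $f(x)$ replaced by $Ry$. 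Now (Condition Rf) gives $\sup_{x\in B} F_1(x) \geq c^j \Rightarrow \sup_{y\in S} F_1'(y) \geq c^j$ with no loss, and (Condition fR) gives $\sup_{y\in S} F_1'(y) \geq c^j \Rightarrow \sup_{x\in B} F_1(x) \geq c^j/c_{g,B}$. The term $F_2$ is common to both sides, so when $F_2 \geq c^j$ both conditions hold trivially, and when $F_2 < c^j$ the equivalence reduces to the $F_1$-part handled by the lemma.

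Finally I would absorb the multiplicative constant $c_{g,B}$ into a new choice of $c$. Replacing $c$ by $c' := c/\max(1,c_{g,B})$ works uniformly for all $1\le j\le n+1$, because $\max(1,c_{g,B})^j \geq c_{g,B}$ whenever $j\geq 1$, so $(c')^j \leq c^j/c_{g,B}$. Since the remaining parameters $k_i$ and $w(c,i)$ pass through unchanged, this yields the required equivalence of the two conditions. The only mild obstacle is bookkeeping: one must verify that the constant $c_{g,B}$ of the lemma depends only on the fixed pair $(g,B)$ (which it does, since $B$ is a fixed ball in Theorem~\ref{Thm_sing} and $g = h^{-1}\circ f$ is fixed) and that the exponent $j$ is bounded so that the absorption into $c$ can be done uniformly.
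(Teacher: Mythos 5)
Your proof is correct and takes essentially the same approach as the paper, which simply states that the proposition ``follows from Lemma \ref{Condition Rf} and the discussion above'': you rewrite Condition~(2) via the covolume identity, apply the two-sided comparison lemma to the $x$-dependent term of the max, and absorb the constant $c_{g,B}$ into a new $c$ using the boundedness of $j$. The details you fill in (noting $B\cap\supp\mu = B$ since $\mu=\lambda_d$, pushing the sup through the max because the second term is constant, and the uniform-in-$j$ absorption via $c' = c/\max(1,c_{g,B})$) are exactly what the paper leaves implicit.
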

\subsection{Proof of Theorem \ref{thm inheritance_singular}} 
Suppose there exists $y\in \mathcal{M}$ such that $y$ is not $W$-singular. Then the Condition \ref{2nd_cond_R} in Theorem \ref{Thm_sing} holds by Proposition \ref{p2}. Since $f_{\star}\lambda_d$ is \textit{decaying} by \cite[Theorem 2.1]{KLW}, by \cite[Lemma 4.3]{KW} Condition (1) in Theorem \ref{Thm_sing}. Therefore, by Theorem \ref{Thm_sing}, For $\lambda_{\mathcal{M}}$-almost every $y\in\mathcal{M}$ are not $W$-singular. By Proposition \ref{Independent condition} Condition \ref{2nd_cond_R} only depends on $R$, we conclude the theorem.
\subsection{ A variant of Condition \ref{2nd_cond_R} to prove Theorem \ref{thm inheritance_uniform}} Since the proof of Theorem \ref{thm inheritance_singular} and Theorem \ref{thm inheritance_uniform} are very similar, we state most of the required statements without any proof. 
\begin{theorem}\label{Thm_unifrom}
  Let $W$ be a set of weights such that $w_{min}>0$ and $\tau\geq 0$. Let $X$ be a Besicovitch space, $B=B(x,r)\subset X$ a ball, $\mu$ be a Federer measure on $X$, and suppose that $f:\tilde{B}\to \R^n$ is a continuous map. Suppose that the following two properties are satisfied.
	\begin{enumerate}
		\item For every $w\in W$, there exists $C,\alpha>0$ such that all the functions $x\to \cov( g^w_ku_{f(x)}\Gamma)$, $\Gamma\in\mathfrak{P}({\Z,n+1})$, are $(C,\alpha)$ good on $\tilde B$ w.r.t. $\mu$;
		\item For every $\gamma>\tau$, there exists a sequence ${k_i}\to\infty$ such that 
  for some $w=w(\gamma,i)\in W$ and any $\Gamma\in \mathfrak{P}(\Z,n+1)$ one has  
  $$
		\sup_{B\cap\supp{\mu}}\cov (g^w_k u_{f(x)}\Gamma)\geq\left(e^{-\gamma k}\right)^{\rk(\Gamma)}.
		$$
	\end{enumerate}
Then for $\mu$ almost every $x\in B$, $\hat\tau_W(f(x))\leq \tau.$
\end{theorem}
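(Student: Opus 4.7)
The plan is to run the argument of Theorem \ref{Thm_sing}, but sliding the $\delta$-threshold down to track the uniform exponent $\hat\sigma_W$ via the Dani correspondence. Let $\tau=\tau(\sigma)$ be the value postulated in Equation \eqref{tau_sigma_relation}, which I read as $\sigma=\frac{\tau+w_{\min}}{(1-\tau)w_{\min}}$. The right-hand side is monotonically increasing in $\tau\in[0,1)$, so the upper bound in Theorem \ref{thm: Dani's correspondence:unirofm} forces the inclusion $\{y:\hat\sigma_W(f(y))>\sigma\}\subset\{y:\hat\tau_W(f(y))>\tau\}$. The hypothesis $\sigma<\frac{1-w_{\min}}{w_{\max}-w_{\min}}$ keeps the corresponding $\tau$ inside the admissible range where both bounds of Theorem \ref{thm: Dani's correspondence:unirofm} remain valid. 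After taking a countable union over rationals $\gamma>\tau$, it thus suffices to prove $\mu\{y\in B:\hat\tau_W(f(y))>\gamma\}=0$ for every such $\gamma$.

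Fix one such $\gamma$ and pick an intermediate $\gamma'\in(\tau,\gamma)$. Hypothesis (2), applied with $\gamma'$, supplies a sequence $k_i\to\infty$ and weights $w_i:=w(\gamma',i)\in W$ such that for every $\Gamma\in\mathfrak{P}(\Z,n+1)$,
$$\sup_{y\in B\cap\supp\mu}\cov\bigl(g^{w_i}_{k_i}u_{f(y)}\Gamma\bigr)\ge\bigl(e^{-\gamma' k_i}\bigr)^{\rk(\Gamma)}.$$
Combined with hypothesis (1), Kleinbock's quantitative nondivergence \cite[Theorem 2.2]{Kleinbock-exponent}, applied with threshold $c=e^{-\gamma' k_i}$ and test parameter $\varepsilon=e^{-(\gamma-\gamma')k_i}<1$ (legal once $k_i$ is large), yields
$$\mu\bigl(\{y\in B:\delta(g^{w_i}_{k_i}\pi(u_{f(y)}))<e^{-\gamma k_i}\}\bigr)\ll e^{-\alpha(\gamma-\gamma')k_i}\,\mu(B).$$
A fortiori, the same upper bound holds for the smaller set
$$A_i:=\{y\in B:\delta(g^{w}_{k_i}\pi(u_{f(y)}))<e^{-\gamma k_i}\text{ for every }w\in W\}.$$

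By Definition \ref{tauhatset-dynamics}, $\hat\tau_W(f(y))>\gamma$ means that for every sufficiently large $t$ and every $w\in W$ one has $\delta(g^w_t\pi(u_{f(y)}))<e^{-\gamma t}$; in particular $y\in A_i$ for all large enough $i$. Hence
$$\{y\in B:\hat\tau_W(f(y))>\gamma\}\subset \liminf_{i\to\infty}A_i=\bigcup_{N\ge 1}\bigcap_{i\ge N}A_i.$$
Since $\mu(A_i)\to 0$, each intersection $\bigcap_{i\ge N}A_i$ has measure zero, and so does the countable union.

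The main obstacle is Step 1: one must check carefully that the hypothesis $\sigma<\frac{1-w_{\min}}{w_{\max}-w_{\min}}$ (equivalently, that $\tau$ lies strictly below the threshold where the upper and lower Dani inequalities collide) is exactly what makes the upper bound in Theorem \ref{thm: Dani's correspondence:unirofm} strict and invertible, so that the reduction $\{\hat\sigma_W(f(y))>\sigma\}\subset\{\hat\tau_W(f(y))>\tau\}$ is a genuine inclusion. Everything past Step 1 is a cosmetic rescaling of the proof of Theorem \ref{Thm_sing}, replacing the constant $c$ used there by the decaying scale $e^{-\gamma k_i}$.
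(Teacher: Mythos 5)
Your proposal is correct and takes essentially the same route as the paper, which (as in Theorem \ref{Thm_sing}) applies Kleinbock's quantitative nondivergence at the decaying scale $e^{-\gamma k_i}$ and then intersects over large $k_i$. In fact your write-up resolves several imprecisions the paper's terse proof leaves implicit: the role of $\varepsilon$ (you correctly introduce an intermediate $\gamma'\in(\tau,\gamma)$ so that $\varepsilon=e^{-(\gamma-\gamma')k_i}\to 0$, which is what makes the right-hand side vanish), the reduction from $\{\hat\sigma_W>\sigma\}$ to $\{\hat\tau_W>\tau\}$ via the upper bound in Theorem \ref{thm: Dani's correspondence:unirofm} (the paper refers to an equation labeled \texttt{tau\_sigma\_relation} that is never displayed, and your reconstruction $\sigma=\frac{\tau+w_{\min}}{(1-\tau)w_{\min}}$ is the consistent choice), and the countable union over rational $\gamma>\tau$.
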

\begin{proof} For any $\varepsilon>0,$ and large $k_i$
\begin{equation*}
 \begin{aligned}
	&\mu\left(\bigg\{x\in B\left | ~\delta (g^{w(c,i)}_{k_i} \pi(u_{f(x)}))<e^{-\gamma k_i}\right.\bigg\}\right)\\
	&\ll \varepsilon^{\alpha} \mu(B)\\
	&= E  \varepsilon^{\alpha}.
	\end{aligned}\end{equation*}
 For every $\varepsilon>0$ and for all large $N$,
$$\mu\{ x\in B~|~\hat\tau_W(f(x))>\gamma\}\leq \mu\left( \bigcap_{k\geq N}\bigg\{x\in B\left | \delta (g^w_{k} \pi(u_{f(x)}))< e^{-\gamma k} ~~\forall w\in W\right.\bigg\}\right)\ll \varepsilon^{\alpha}.$$
Thus the conclusion of this theorem follows.
\end{proof}
Then we also have a variant of Proposition \ref{p2}
\begin{proposition}\label{reverse_uniform}
If the second Condition $(2)$ in Theorem \ref{Thm_unifrom} does not hold for $\tau$, then there exists $\gamma>\tau$, $\hat\tau_W(f(x))>\gamma$ for every $x\in \supp{\mu}\cap B.$ 
\end{proposition}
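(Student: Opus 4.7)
The plan is to follow the skeleton of the proof of Proposition \ref{p2}, but track a quantitative rate rather than mere divergence. First, I will unpack the failure of Condition (2) of Theorem \ref{Thm_unifrom}: its logical negation produces a specific $\gamma^{*}>\tau$ together with a threshold $K_{0}$ such that for every $k\geq K_{0}$ and every $w\in W$ there exists $\Gamma_{k,w}\in\mathfrak{P}(\Z,n+1)$ with
\[
\sup_{x\in B\cap\supp\mu}\cov\bigl(g_{k}^{w}u_{f(x)}\Gamma_{k,w}\bigr)\;<\;\bigl(e^{-\gamma^{*}k}\bigr)^{\rk(\Gamma_{k,w})}.
\]
Crucially, the supremum is taken uniformly in $x$, so the bound holds pointwise for every $x\in\supp\mu\cap B$, and $\gamma^{*}$ is \emph{strictly} larger than $\tau$ by the nature of the negation.

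Next, I will translate this covolume estimate into a bound on the first minimum. Applying Minkowski's theorem to the rank-$j$ sublattice $g_{k}^{w}u_{f(x)}\Gamma_{k,w}$, with $j=\rk(\Gamma_{k,w})$, produces a nonzero lattice vector of norm at most a dimensional constant times $\cov(\cdots)^{1/j}$, which yields
\[
\delta\bigl(g_{k}^{w}\pi(u_{f(x)})\bigr)\;\leq\;C_{n}\,e^{-\gamma^{*}k}
\]
for every $k\geq K_{0}$, $w\in W$, and $x\in B\cap\supp\mu$. For $k$ large the constant $C_{n}$ is absorbed, so after fixing any $\gamma'\in(\tau,\gamma^{*})$ I get $\delta(g_{k}^{w}\pi(u_{f(x)}))<e^{-\gamma' k}$ uniformly in $x,w$. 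By Definition \ref{tauhatset-dynamics} this forces $\hat\tau_{W}(f(x))\geq\gamma'$ for every $x\in\supp\mu\cap B$.

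Finally I will invoke the left-hand bound of Theorem \ref{thm: Dani's correspondence:unirofm},
\[
\hat\sigma_{W}(f(x))\;\geq\;\frac{\hat\tau_{W}(f(x))+w_{max}}{(1-\hat\tau_{W}(f(x)))\,w_{max}},
\]
and exploit the strict monotonicity of $\gamma\mapsto(\gamma+w_{max})/((1-\gamma)w_{max})$ on $[0,1)$. Coupled with the relation \eqref{tau_sigma_relation} between $\tau$ and $\sigma$ (which is precisely this identity evaluated at $\hat\tau_{W}=\tau$), the strict inequality $\gamma'>\tau$ upgrades to $\hat\sigma_{W}(f(x))>\sigma$ for every $x\in\supp\mu\cap B$, as required.

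The main --- and really only --- subtlety is arranging that the strict inequality $\gamma'>\tau$ survives the Minkowski absorption step; this is guaranteed because the negation of Condition (2) delivers a \emph{single} $\gamma^{*}$ strictly exceeding $\tau$, leaving room to pick $\gamma'$ strictly between them. Beyond this, the argument is a direct dynamical translation via Theorem \ref{thm: Dani's correspondence:unirofm}, essentially parallel to the proof of Proposition \ref{p2}.
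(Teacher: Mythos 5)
Your skeleton is the right one and it is the one the paper intends: negate Condition (2) of Theorem \ref{Thm_unifrom}, use the covolume--to--first-minimum step (Minkowski, as in the proof of Proposition \ref{p2}) to deduce $\delta(g^w_k\pi(u_{f(x)}))<e^{-\gamma' k}$ for all large $k$, all $w\in W$, and every $x\in B\cap\supp\mu$ with some $\gamma'>\tau$, hence $\hat\tau_W(f(x))\ge\gamma'>\tau$. That part is correct, including the absorption of the Minkowski constant using the strictness of $\gamma^*>\tau$, and the observation that the supremum over $B\cap\supp\mu$ in the negation gives a conclusion for \emph{every} such $x$ (not just a.e.\ $x$).

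The gap is in the last step, where you convert $\hat\tau_W>\tau$ into $\hat\sigma_W>\sigma$. You invoke the \emph{left-hand} inequality of Theorem \ref{thm: Dani's correspondence:unirofm}, which involves $w_{\max}$, and then say that the relation \eqref{tau_sigma_relation} between $\tau$ and $\sigma$ ``is precisely this identity evaluated at $\hat\tau_W=\tau$.'' But the relation that makes Theorem \ref{Thm_unifrom} itself work is the one coming from the \emph{right-hand} inequality: the proof of Theorem \ref{Thm_unifrom} shows $\hat\tau_W(f(x))\le\tau$ $\mu$-a.e., and to conclude $\hat\sigma_W(f(x))\le\sigma$ it needs $\sigma=\frac{\tau+w_{\min}}{(1-\tau)w_{\min}}$ together with the upper bound $\hat\sigma_W\le\frac{\hat\tau_W+w_{\min}}{(1-\hat\tau_W)w_{\min}}$; this is consistent with the substitution $\tau=\frac{(\sigma-1)w_{\min}}{1+\sigma w_{\min}}$ appearing in the proof of Theorem \ref{thm: Dani's correspondence:unirofm}. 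With that relation, knowing $\hat\tau_W(f(x))\ge\gamma'>\tau$ and the left-hand bound only gives $\hat\sigma_W(f(x))\ge\frac{\gamma'+w_{\max}}{(1-\gamma')w_{\max}}$, and since for $\gamma'\in(0,1)$ one has $\frac{\gamma'+w_{\max}}{(1-\gamma')w_{\max}}\le\frac{\gamma'+w_{\min}}{(1-\gamma')w_{\min}}$, the quantity on the right can be strictly less than $\sigma=\frac{\tau+w_{\min}}{(1-\tau)w_{\min}}$ even though $\gamma'>\tau$. So the strict inequality you want does not follow. The two directions of the Dani correspondence use opposite endpoints ($w_{\min}$ versus $w_{\max}$), and they only agree when $w_{\min}=w_{\max}$, i.e.\ for the single standard weight.

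To be fair, this mismatch is partly inherited from the source: the cross-reference \eqref{tau_sigma_relation} is dangling in the paper, and no proof of Proposition \ref{reverse_uniform} is actually given (the text says the argument is a variant of Proposition \ref{p2} and leaves it to the reader). But a complete proof would need to address the $w_{\min}/w_{\max}$ discrepancy explicitly -- for instance by running the whole inheritance argument at the level of $\hat\tau_W$ rather than $\hat\sigma_W$, or by choosing $\tau$ from $\sigma$ via $w_{\max}$ in Condition (2) and then checking separately that Theorem \ref{Thm_unifrom} still closes -- rather than treating the two conversions as if they were the same identity.
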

The rest of the proof is an adaptation of the same method as in the proof of Theorem \ref{thm inheritance_singular}. So we will leave it to an enthusiastic reader.

\section{The abstract theorems and their applications}
The following theorem is a slight modification of \cite[Theorem 2.1]{Weiss2004} and \cite[Theorem 5.1]{KW} which was based on abstracting Khintchine’s classical argument. 

Let $Y$ be a locally compact Hausdorff space on which a noncompact locally compact topological group or semigroup $A$ acts. Let $X$ be a locally compact Hausdorff first countable space such that $\pi: X\to Y$ is a covering.

\begin{theorem}\label{thm: Barak geeralization3}
Let $\{X_i\}, \{X_i'\}$ be  sequences of  subsets of $X$ and $\{A_i\}$ be an embedded sequence of subsets of $A.$
Assume the following.  
\begin{itemize}
    \item \textbf{Density}: For every $i$\[
    X_i=\overline{X_i\cap\bigcup_{j\ne i}X_j}. \]
    \item \textbf{Transversality I:} For every $j\neq i$, $X_j=\overline{X_j\setminus X_i}$.
     \item \textbf{Transversality II:} For every $j, i$, $X_j=\overline{X_j\setminus X'_i}$.
    \item \textbf{Local Uniformity:} For every $i,j$, $x\in X_j$, and a compact set $K\subset Y$ there exists a compact $C\subset A_i$ and a neighborhood  $\mathcal{U}$ of $x$ such that for every $a\in A_i\setminus C$ and every $z\in\mathcal{U}\cap X_j$, we have $a\pi(z)\notin K$.
\end{itemize}
Then, there are uncountably many $y\in X\setminus\left(\bigcup_{i} X_i\cup \bigcup_{j} X'_j\right)$ such that for any $i$ the orbit $A_i\pi(y)$ diverges.
\end{theorem}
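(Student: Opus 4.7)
The plan is to construct the required $y$'s via a Cantor-type tree of nested neighborhoods, in the spirit of Khintchine's classical argument and the abstract construction of \cite[Theorem 2.1]{Weiss2004}. Fix a compact exhaustion $K_1 \subset K_2 \subset \cdots$ of $Y$ with $K_m \subset \operatorname{int}(K_{m+1})$, and list the countable collection of \emph{tasks}: for each pair $(i,m)$ a divergence task ``$a\pi(y) \notin K_m$ for every $a \in A_i$ off a compact subset'', together with one avoidance task per $X_k$ and per $X'_\ell$. Enumerate these as $\mathcal{T}_1,\mathcal{T}_2,\dots$ so that each task recurs infinitely often. I then build a full binary tree of nonempty open sets $\mathcal{U}_\sigma \subset X$, indexed by finite binary strings $\sigma$, with $\overline{\mathcal{U}_{\sigma 0}},\overline{\mathcal{U}_{\sigma 1}}$ disjoint subsets of $\mathcal{U}_\sigma$ and $\diam(\mathcal{U}_\sigma) \to 0$; at every node I also record an index $j(\sigma)$ and a point $x_\sigma \in X_{j(\sigma)}\cap \mathcal{U}_\sigma$.

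At a node $\sigma$ of level $n$, dedicate the step to $\mathcal{T}_n$. For a divergence task $(i,m)$: apply Local Uniformity at $x_\sigma \in X_{j(\sigma)}$ with $K_{m+1}$ and $A_i$ to get a compact $C \subset A_i$ and an open neighborhood $\mathcal{V}\subset \mathcal{U}_\sigma$ of $x_\sigma$ such that $a\pi(z) \notin K_{m+1}$ for every $a \in A_i \setminus C$ and $z \in \mathcal{V}\cap X_{j(\sigma)}$; then invoke Density to pick two points $x_{\sigma 0},x_{\sigma 1}\in X_{j(\sigma)}\cap \mathcal{V}$ lying in some $X_{j'},X_{j''}$ with $j',j''\neq j(\sigma)$ and $j'\neq j''$, set $j(\sigma i)$ accordingly, and take $\mathcal{U}_{\sigma 0},\mathcal{U}_{\sigma 1}$ to be small neighborhoods of $x_{\sigma 0},x_{\sigma 1}$ inside $\mathcal{V}$, with disjoint closures and diameters at most $\tfrac12\diam(\mathcal{U}_\sigma)$. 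For an avoidance task targeting $X_k$ (resp.\ $X'_\ell$), use Transversality I (resp.\ II) to find $x_{\sigma 0},x_{\sigma 1} \in X_{j(\sigma)}\cap \mathcal{U}_\sigma$ outside $X_k$ (resp.\ $X'_\ell$); since $X_k$ is closed (it equals the closure appearing in Density), and since the same closedness is assumed for $X'_\ell$ in the intended applications, one may shrink $\mathcal{U}_{\sigma 0},\mathcal{U}_{\sigma 1}$ to have closures disjoint from the forbidden set.

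Each infinite branch $\sigma \in \{0,1\}^{\N}$ then yields a unique limit $y_\sigma \in \bigcap_n \overline{\mathcal{U}_{\sigma|_n}}$; disjointness of children's closures makes $\sigma\mapsto y_\sigma$ injective, producing uncountably many candidates. The avoidance steps guarantee $y_\sigma \notin X_k$ and $y_\sigma \notin X'_\ell$ for all $k,\ell$. For divergence of $A_i\pi(y_\sigma)$, fix $(i,m)$ and take any level $n$ at which $\mathcal{T}_n$ is the divergence task for $(i,m)$; by density of $X_{j(\sigma|_{n-1})}\cap \mathcal{V}$ near $x_{\sigma|_{n-1}}$ choose $z_t \to y_\sigma$ with $z_t \in \mathcal{V}\cap X_{j(\sigma|_{n-1})}$. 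Continuity of the $A$-action combined with $a\pi(z_t)\notin K_{m+1}$ forces $a\pi(y_\sigma) \in \overline{Y\setminus K_{m+1}} \subset Y\setminus K_m$ for every $a\in A_i\setminus C$. Hence $A_i\pi(y_\sigma)$ leaves $K_m$ outside the compact set $C$, and since $m$ was arbitrary, the orbit diverges.

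The main obstacle is the mismatch between Local Uniformity, which controls $a\pi(z)$ only for $z \in X_{j(\sigma)}$, and the conclusion, which demands $y_\sigma \notin \bigcup_i X_i$. The bridge is the compact-inflation trick $K_m \subset \operatorname{int}(K_{m+1})$ together with continuity of the action, which propagates divergence from $X_{j(\sigma)}$-approximating sequences to the limit. A secondary delicate point is ensuring that Density remains applicable at every descent: this forces the branching rule $j(\sigma i)\neq j(\sigma)$ and the simultaneous distinctness $j'\neq j''$, both of which are secured by iterating Density inside the shrunk neighborhood $\mathcal{V}$.
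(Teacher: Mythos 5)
Your proposal takes a genuinely different route than the paper. The paper argues by contradiction: it assumes the set $\mathcal{Z}$ of exceptional points is countable, constructs a single descending chain $U_1\supset U_2\supset\cdots$ together with compact sets $\tilde A_0\subset\tilde A_1\subset\cdots$ and indices $i_1<i_2<\cdots$ satisfying four inductive invariants (a)--(d), and diagonalizes against $\mathcal{Z}$, the $X'_k$, and the $X_j$. You instead build a Cantor tree of nested neighborhoods to directly manufacture uncountably many limits. Both strategies are viable in principle, and a direct tree construction has the appeal of not needing the countability reduction.

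However, your divergence argument contains a genuine gap. You write that one may ``by density of $X_{j(\sigma|_{n-1})}\cap\mathcal{V}$ near $x_{\sigma|_{n-1}}$ choose $z_t\to y_\sigma$ with $z_t\in\mathcal{V}\cap X_{j(\sigma|_{n-1})}$.'' This step is impossible. The Density hypothesis says each $X_j$ equals a closure and is therefore closed; your construction, by design, ensures $y_\sigma\notin\bigcup_j X_j$. Hence $y_\sigma\notin\overline{X_{j(\sigma|_{n-1})}}\supset\overline{X_{j(\sigma|_{n-1})}\cap\mathcal{V}}$, and no sequence $z_t\in X_{j(\sigma|_{n-1})}$ can converge to $y_\sigma$. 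Your ``continuity bridge'' never gets off the ground: Local Uniformity only controls $a\pi(z)$ for $z$ on the set $X_{j(\sigma)}$, and the limit point you care about lies on none of the $X_j$. The flaw is structural, not cosmetic, because the issue persists no matter how small you shrink the children neighborhoods.

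The paper's resolution is its inductive invariant (d): at each stage one additionally propagates the divergence estimate from the anchor point $\eta$ (which does lie on $X_{i_k}\cap X_{i_{k+1}}$) to a full neighborhood $\tilde U$, but only for the \emph{compact annulus} of group elements $\tilde A_{k+1}\setminus\int\tilde A_k$, using uniform continuity of the $A$-action on a compact family of group elements together with the inflation $S_k\subset\int(S_{k+1})$. Because the annuli are compact, the control extends to every point of $\tilde U$, not just $X_j$-points, and hence survives to the limit; because the annuli exhaust $A$, every divergent sequence in $A_i$ is eventually caught. To repair your tree argument you would need to build this compact-annulus invariant into each node of the tree; the pointwise-$a$, sequence-$z_t$ argument you sketch cannot work as written.
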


\begin{proof}
Let \[\mathcal{Z}:=\left\{z\in X\setminus \left(\bigcup_{j} X_j\cup \bigcup_{j} X'_j\right)~|~ A_i\pi(z) \text{ diverges for any }i\right\},\] 
and suppose by contradiction that it is countable, i.e., $\mathcal{Z}=\{z_1, z_2, \cdots\}$. 

First, let us fix an increasing sequence of compact sets that exhaust $X$, i.e., $\{S_k\}$ such that $\cup_k S_k=X$, $S_k\subset \int(S_{k+1})$, and $S_0=\emptyset$. We construct a sequence of open sets $U_1,\dots,U_k,\dots$ in $X$ such that for each $k$, $\overline U_k$ is compact, an increasing sequence of compact sets $\tilde{A}_0,\tilde{A}_1\cdots, \tilde{A}_k,\cdots$  of $A$ such that $\tilde{A}_0\subset A_0,\tilde{A}_1\subset A_1\cdots, \tilde{A}_k\subset A_k,\cdots$, and an increasing sequence of indices $i_1,i_2,\cdots$, so that the following properties are met for $k\geq 1$:\

\begin{enumerate}[label=\alph*]
    \item[(a)] ~$\overline U_{k+1}\subset U_{k}$.
    \item[(b)]~ $U_k\cap \left (X'_k\cup\{z_k\}\right)=\emptyset$  and for every $j<i_k$, $U_k\cap X_j=\emptyset. $
    \item[(c)]~ $X_{i_k}\cap U_k\neq\emptyset$ and for every $h\in X_{i_k}\cap U_k$ and every $a\in A_k\setminus \tilde{A}_k$, we have $a\pi(h)\notin S_k.$
    \item[(d)]~For every $h\in U_k,$ and $a\in(A_{k-1}\cap\tilde{A}_{k})\setminus \int \tilde{A}_{k-1}$ we have $a\pi(h)\notin S_{k-1}.$
\end{enumerate}
Now we claim that such a construction leads to a contradiction. Note that $\cap_k U_k$ is nonempty since $\cap_k \overline U_k\subset \cap_k U_k,$ and $\overline U_k$ is compact. Let $z\in \cap U_k$ and $i\in\N$. Then, by $(b)$ we have $z\notin \mathcal{Z}$ and $z\notin \bigcup_i X_i\cup \bigcup_i X'_i$. Since $\{A_k\}$ is an embedded sequence, for any $k> i$ we have $A_i\subset A_{k-1}$, and so by $(d)$, $A_i\pi(z)$ diverges. 
A contradiction to the definition of $\mathcal{Z}$. The conclusion of the theorem follows.

Let us choose $i_1=1$, $\tilde{A}_0=\emptyset$, and fix some $x\in X_1$. By the Local Uniformity assumption, there exists a small enough open neighborhood $U_1$ of $x$ such that $z_1\notin U_1 $ and a compact subset $\tilde{A}_1\subset A_1$ so that for all $z\in X_1 \cap U_1$ and all $a \in A_1\setminus\tilde{A}_1$, we have $a\pi(z) \notin S_1$. Also we take $U_1$ small enough such that $U_1\cap X_1'=\emptyset$, which is possible since  $X_1\not\subset \overline{X_1'}=X_1'$ by Transversality II. Then, the conditions are met for $k=1$. 

Assume that we constructed $\tilde{A}_0,\dots,\tilde{A}_k$, $i_1,\dots,i_k$, and $U_1,\dots,U_k$ which satisfy the above. 
By Density, there exists $i_{k+1}\neq i_k$ such that 
\begin{equation}\label{eq:eta defn}
    \mathbf{\eta}\in X_{i_k}\cap X_{i_{k+1}}\cap U_{k}\neq \emptyset.
\end{equation}
Note that by the second part of assumption (b) we have $i_{k+1}>i_k$. 
By Local Uniformity, there exists an open neighborhood $U$ of $\mathbf{\eta}$ with $U\subset \overline U_{k}$ and a compact set $\tilde{A}_{k+1}\subset A_{k+1}$ such that all $h\in X_{i_k+1}\cap U$ and all $a\in A_{k+1}\setminus \tilde{A}_{k+1}$ satisfy $a\pi(h)\notin S_{k+1}$. Since $\eta\in X_{i_k}$, $S_k\subset \int(S_{k+1})$ and $\tilde{A}_{k+1}\setminus \int(\tilde{A}_k)$ is compact, by continuity and (c) there exists a neighborhood  $\tilde{U}\subset U$ of $\mathbf{\eta}$ such that for $h\in \tilde{U}$, and $a\in (A_k\cap\tilde{A}_{k+1})\setminus \int(\tilde{A}_{k})$, we have $a\pi(h)\notin S_k$.  
Now let us define  $U_{k+1}$ to be 
\begin{equation}\label{eq: U_k+1 defn}
    U_{k+1}:=\tilde U\setminus\left( \{z_{k+1}\}\cup X_k' \cup\bigcup_{j<i_{k+1}} X_j \right).
\end{equation}

We are left to verify that the above construction satisfies conditions $(a)-(d)$. Condition $(a)$ is satisfied since $U_{k+1}\subset \tilde U\subset U\subset \overline U_{k}.$ By \eqref{eq: U_k+1 defn}, condition $(b)$ is also satisfied. Next, we claim that $X_{i_{k+1}}\cap U_{k+1}\neq \emptyset$. Note that $\eta\in X_{i_{k+1}}\cap \tilde U$.  Then, by the Transversality conditions, $\eta \in X_{i_{k+1}}\subset \overline {X_{i_{k+1}}\setminus \bigcup_{j<i_{k+1}} X_j\cup X_k'}$. 

Hence, $$
\tilde U\cap X_{i_{k+1}}\setminus \left(X_k'\bigcup_{j<i_{k+1}} X_j \right) \neq \emptyset.
$$ 
Since $z_{k+1}\notin\bigcup_i X_i$, from the above we can also deduce\[
\tilde U\cap X_{i_{k+1}} \setminus \left(X_k'\cup\{z_{k+1}\}\bigcup_{j<i_{k+1}} X_j \right) \neq \emptyset,\]
which implies $U_{k+1}\cap X_{i_{k+1}}\neq\emptyset$. 
Thus, the first part of condition $(c)$ is met. The second part of condition $(c)$ is met by the construction of $\tilde{A}_{k+1}$ and $U_{k+1}\subset \tilde U\subset U.$ By the choice of $\tilde{U}$ we have 
$h\in U_{k+1}$, and $a\in (A_{k}\cap\tilde{A}_{k+1})\setminus \int (\tilde{A}_{k})$, we have $a\pi(h)\notin S_k.$ Thus, condition $(d)$ is also satisfied.
\end{proof}

Next, we wish to prove a similar statement which also captures the \emph{rate of growth}. 

\begin{definition}
A \emph{rate of growth} of a locally compact space $Y$ is a collection $\{K(t):t\ge 0\}$ of subsets of $Y$ which satisfy the following properties: 
\begin{itemize}
    \item Exhaust: Any compact subset of $Y$ is contained in $K(t)$ for some $t\ge0$.
    \item Embedded: $t_1<t_2$ implies $K(t_1)\subset \operatorname{int}(K(t_2))$.
    \item Continuous: For any $0\le a\le b \le\infty$, the set $\{(t, x) : x\in K(t),a\le t\le b\}$ is closed in $\R\times Y$. 
\end{itemize}
\end{definition}

\begin{definition}
We say that a trajectory ${a(t)y : t\ge 0}$ is \emph{divergent
with rate given by $\{K(t)\}$} if there exists $t_0$ such that for every $t\ge t_0$ we have
$a(t)y \notin K(t)$.
\end{definition}

The next result is a generalization of \cite[Thm 2.4]{Weiss2004}. 
\begin{theorem}\label{thm: Weiss2004_with rate_new}
Let a one parameter semigroup $A=\{a(t)\}$ be given, together with a rate of growth ${K(t)}$. Let $\{X_i\}, \{X_i'\}$ be sequences of subsets of $X$ which satisfy Transversality I as in Theorem \ref{thm: Barak geeralization3} as well as:
\begin{itemize}
    \item \textbf{Density of Transverse:} For every $i$ there exists an infinite set $J_i$ such that for all $j\in J_i$ we have $X_j=\overline{X_j\setminus X'_i}$ and for all $k$ we have $X_k=\overline{X_k\cap\bigcup_{j\in J_i\setminus\{k\}}X_j}$. 
    \item \textbf{Local Uniformity w.r.t. ${K(t)}$:} for every $i$ and every $x\in X_i$ there exists a neighborhood $U$ of $x$ and $t_0$ such that for every $z \in U\cap X_i$ and every $t>t_0$, $a(t)\pi(z)\notin K(t)$. 
\end{itemize}
Then there exist uncountably many $x_0\in X\setminus\left( \bigcup_{i}X_i\cup\bigcup_{j}X_j'\right)$ such that $A\pi(x_0)$ is divergent with rate given by $\{K(t)\}$. 
\end{theorem}

\begin{proof}
We follow a similar strategy to the previous proof. First, we assume by contradiction that the set of points \[
\mathcal{Z}':=\left\{z\in X\setminus\left(\bigcup_i X_i\cup\bigcup_{j}X_j'\right):A\pi(z)\text{ diverges with rate given by }\{K(t)\}\right\}
\] 
is countable. Next, we construct a set $\mathcal{Z}$, a sequence of open sets $U_1,\dots, U_k,\dots \subset X$, and an increasing sequence of indices $i_1,i_2,\dots$, as well as unbounded sequence of positive numbers $T_1<T_2<\cdots$, so that properties (a),(b) as in the proof of Theorem \ref{thm: Barak geeralization3} hold in addition to the following:
\begin{enumerate}
    \item[(c')] $X_{i_k}\cap U_{k}\neq\emptyset$ and for every $h\in X_{i_k}\cap U_k$ and every $t>T_k$ we have $a(t)\pi(z)\notin K(t)$.
    \item[(d')] For every $k\ge2$, $h\in U_k$, and $t\in[T_{k-1},T_k]$ we have $a(t)\pi(z)\notin K(t)$. 
\end{enumerate}
As in the previous proof, $\cap_k U_k$ is nonempty, and any point in this intersection implies a contradiction to the assumption, proving the claim. 

Let us now construct the needed sequences. Let $i_1$ be the smallest index so that $X_{i_1}=\overline{X_{i_1}\setminus X'_1}$, there exists such $i_1\in J_{1}$ by the Transverse are Dense property. Then, there exists $x_1\in X_{i_1}\setminus X_1'$. Note that by the definition of $\mathcal{Z}'$, $x_1\notin\mathcal{Z}'$. By Local Uniformity w.r.t. $\{K(t)\}$, there exists a small enough neighborhood $U_1'$ of $x$ and a big enough natural number $T_1$ so that every $t\ge T_1$ and $z\in U_1\cap X_{i_1}$ satisfy $a(t)\pi(z)\notin K(t)$. By choosing a possible smaller neighborhood of $x_1$, we may assume that $U_1'\cap(X'_1\cup\{z_1\})=\emptyset$. Let $U_1:=\overline{U_1'}$.  
Then, for $k=1$ properties (a),(b),(c'), and (d') are met. 

Now, assume that we constructed $U_1,\dots,U_k$, $i_1,\dots,i_k$, and $T_1,\dots,T_k$ which satisfy the above. As before, by the Density of Transverse property (In particular, we used $X_{i_k}=\overline{X_{i_k}\cap \bigcup_{j\in J_{k}} X_{j}}$) there exists $i_{k+1}>i_k$ such that \eqref{eq:eta defn} is satisfied (note that $i_{k+1}\in J_k$). 
By Local Uniformity w.r.t. $\{K(t)\}$, there exists an open neighborhood $U$ of $\eta$ and $T_{k+1}>T_k$ such that $\overline{U}\subset U_k$ and all $t\ge T_{k+1}$ and all $z\in U\cap X_{i_{k+1}}$ satisfy $a(t)\pi(z)\notin K(t)$. Since $\eta\in X_{i,k}$, the subsets\[
\left\{(t,a(t)\pi(\eta)):t\in[T_k,T_{k+1}]\right\},\quad\left\{(t,z):z\in K(t),t\in[T_k,T_{k+1}]\right\}\]
of $\R\times Y$ are disjoint, and by the continuity of $\{K(t)\}$, they are closed. Hence, by the continuity of the action and the compactness of $[T_k,T_{k+1}]$ a small enough neighborhood $\tilde{U}\subset U$ of $\eta$ can be chosen so that all points $z\in\tilde{U}$ and $t\in[T_{k-1},T_k]$ satisfy $a(t)\pi(z)\notin K(t)$. We now define the set $U_{k+1}$ by \eqref{eq: U_k+1 defn} (since $i_{k+1}\in J_k)$). 
As in the previous proof, properties (a), (b), (c'), (d') for $k+1$ are satisfied. This completes the proof. 
\end{proof}

\subsection{Existence of weighted singular totally irrational points}

The goal of this subsection it to show the existence of weighted singular totally irrational points with respect to any weight. 

\begin{proof}[{Proof of Theorem \ref{Main: existence manifold}}]
Since $\mathcal{M}$ is a real analytic submanifold of $\R^d$ of dimension at least $2$, up to a permutation of the coordinates, it contains a submanifold of the form
\begin{equation}\label{eq: parameterization}
    \{(x, f(x)~|~x\in U\},
\end{equation}
where $U$ is a bounded open set in $\R^2$ and $f:U\rightarrow\R^{d-2}$ is a real analytic function. Let $\{X_i\}$ be an enumeration of the intersections of all subspaces of the form $q\times\R^{d-1}$ and $
R\times q\times\R^{d-2}$, where $q\in\Q$. 
The set of intersections of $\mathcal{M}$ with rational hyperplanes is countable. By Lemma \ref{lem: k-dim manifold}, each such intersection is a finite union of real analytic manifolds. Let $\{X_i'\}$ be the set of all closures of such real analytic manifolds which appear in these unions. 

To prove the claim, we show that $\{X_i\}$ and $\{X_i'\}$ satisfy the hypotheses of Theorem \ref{thm: Barak geeralization3}.

\textbf{Local Uniformity:}
Let us recall that $X_i=H_i\cap \mathcal{M}$, where $H_i$ are the affine rational hyperplanes normal to either $e_1=(1,0,\cdots,0)$ or $e_2=(0,1,0,\cdots,0)$ where $\mathcal{M}$ is taken small enough such that each $X_i$ is connected real analytic curve. For fixed $j$, suppose $H_j=\{(x_1,\cdots, x_n)\in\R^d~|~ x_1=\frac{p}{q}\}$, for some $p/q\in\Q$. Now let us consider $\mathcal{H}:=\{x=(x_1,\cdots, x_{d+1})\in \R^{d+1}~|~qx_1+px_{d+1}=0\}.$ Let \begin{align}
        W_i&:=\left\{w=(w_1,\dots,w_d)\in(0,1)^d\::\:\:\sum_{j=1}^dw_j=1,\:\forall 1\le j\le d,\: w_j\ge \frac{1}{i}\right\}, \label{eq: W_i defn}\\
        A_i&:=\mathcal{A}^+_{{W}_i},  \text{ for } i\geq d. \nonumber
    \end{align}
    Let $\Gamma_{\mathcal{H}}= \Z^{d+1}\cap \mathcal{H}.$ Note that for every $x\in X_j$, $\pi(u_x)z$, $z\in \mathcal{H}$, has the first coordinate to be $0$. Hence for any weight $w\in W_i,$ 
$$
cov(g_t^w\pi(u_x)\Gamma_{\mathcal{H}})\ll e^{-w_1t}\implies \delta(g_t^w\pi(u_x)\Z^{d+1})\ll e^{-w_1t/d}\ll e^{-t/id},
$$ here the implied constant depends on $q, p, x$.
Hence by Mahler's compactness criterion for any compact set $K$ in the space of unimodular lattices in $\R^{d+1}$, $x\in X_j$ there exist neighborhood of $\mathcal{U}$ of $x$ and $t_0>0$ such that for all $t\geq t_0$ any $w\in W_i,$
$$g_t^w\pi(u_x)\Z^{d+1}\notin K.$$
\textbf{Density:}
This is a weaker condition than \cite[Condition (b), Theorem 1.1]{KMW}. Hence one can find in the proof of \cite[Theorem 1.7]{KMW}, the density condition is already checked.
\textbf{Transversality I and II:}
Both of these transversality conditions follow from \cite[Condition (c), Theorem 1.1]{KMW}    
\end{proof}

Recall that a subset of $\R$ is called \emph{perfect} if it is compact and has no isolated points.
Modifying the proof of \cite[Theorem 1.6]{KMW}, together with applying Theorem \ref{thm: Barak geeralization3}, we have
\begin{theorem}
    Let $n\geq 2$ and $S_1,\cdots, S_k$ be perfect sets of $\R$ such that $S_i\cap \Q $ is dense in $S_i$ for $i=1,2$. Suppose $S=\prod_{i=1}^k S_i.$ Then $S$ contains uncountably many vectors which are $w$-singular for any $w\in (0,1)^d$. 
\end{theorem}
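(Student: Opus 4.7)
The plan is to apply Theorem \ref{thm: Barak geeralization3} in the same spirit as the proof of Theorem \ref{Main: existence manifold}, replacing real analyticity of the ambient manifold by the density of rationals inside $S_1$ and $S_2$. Enumerate as $\{X_i\}$ the non-empty intersections of $S$ with rational affine hyperplanes of the form $\{x_\ell = p/q\}$ for $\ell \in \{1,2\}$ and $p/q \in \Q$, and define the auxiliary family $\{X'_i\}$ exactly as the $L'_i$ in the proof of \cite[Theorem 1.7]{KMW}. Take the embedded sequence $A_i := \mathcal{A}^+_{W_i}$ with $W_i$ as in \eqref{eq: W_i defn}, noting that $(0,1)^d = \bigcup_i W_i$. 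The covering map is the Dani correspondence projection $\pi : \mathrm{SL}_{d+1}(\R) \to \mathrm{SL}_{d+1}(\R)/\mathrm{SL}_{d+1}(\Z)$ precomposed with $x \mapsto u_x$.

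For local uniformity I would repeat the argument of \S\ref{local uniformity} verbatim: if $x \in X_j \subset \{x_\ell = p/q\}$, the primitive integer vector perpendicular to this rational hyperplane produces a vector in $g_t^w u_x \Z^{d+1}$ of norm $\ll e^{-w_\ell t}$, uniformly for $w \in W_i$, and Mahler's criterion together with continuity supplies the required compact $C \subset A_i$ and neighborhood $\mathcal{U}$ of $x$. The two transversality conditions reduce to an elementary check: either $X_i, X_j$ correspond to hyperplanes normal to the same $e_\ell$ and are then equal or disjoint, or their intersection fixes both the first and second coordinates, which is nowhere dense in $X_j$ by perfectness of the remaining factor $S_2$ (respectively $S_1$). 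The same observation handles transversality against $X'_i$.

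The density hypothesis is where the assumption on $S_1, S_2$ truly enters. Fix $X_i \subset \{x_1 = p/q\}$ and $x = (x_1, \dots, x_k) \in X_i$. Since $S_2 \cap \Q$ is dense in $S_2$ and $S_2$ has no isolated points, I can pick a sequence $r_n \in (S_2 \cap \Q) \setminus \{x_2\}$ with $r_n \to x_2$. Then $(x_1, r_n, x_3, \dots, x_k)$ lies in $S$, in $X_i$, and in some $X_{j_n}$ with $j_n \neq i$, realising $x$ as a limit of points in $X_i \cap \bigcup_{j \ne i} X_j$. The symmetric argument works when $X_i \subset \{x_2 = p/q\}$.

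Theorem \ref{thm: Barak geeralization3} then supplies uncountably many $y \in S$ avoiding every $X_i$ and $X'_i$ for which $A_i \pi(u_y)$ diverges for all $i$. Theorem \ref{thm: Dani's correspondence} translates this into $W_i$-singularity of $y$ for every $i$, and since $(0,1)^d = \bigcup_i W_i$ together with \eqref{eqn: sing(W) and sing(w)} implies $w$-singularity for each proper $w$, the conclusion follows. The main obstacle is the density step: it is precisely here that both the density of rationals and perfectness of $S_1, S_2$ are needed -- the latter to force $r_n \ne x_2$ so that the approximating point actually lies in a \emph{different} $X_{j_n}$. Failure of either hypothesis would break the construction, while all other verifications are essentially notational changes from \S\ref{local uniformity}.
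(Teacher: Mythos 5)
Your proposal is correct and takes essentially the same route the paper intends: the paper gives only a one-line justification (``modify \cite[Theorem 1.6]{KMW} and apply Theorem \ref{thm: Barak geeralization3}''), and your write-up fills in exactly the details that modification entails. The enumeration of $X_i$ as intersections with rational hyperplanes normal to $e_1$ or $e_2$, the choice $A_i=\mathcal A^+_{W_i}$, the local-uniformity argument lifted verbatim from \S\ref{local uniformity}, and the passage from $\bigcap_i \sing(W_i)$ to $w$-singularity for all proper $w$ via $\bigcup_i W_i=(0,1)^d$ and \eqref{eqn: sing(W) and sing(w)} all match the framework already in place. Your density verification is the genuinely new step relative to Theorem \ref{Main: existence manifold}, and it is carried out correctly: for $x\in X_i\subset\{x_1=p/q\}$, replacing $x_2$ by a rational $r_n\in S_2\setminus\{x_2\}$ converging to $x_2$ (possible precisely because $S_2\cap\Q$ is dense and $S_2$ has no isolated points) produces points of $X_i\cap X_{j_n}$ with $j_n\ne i$ converging to $x$, and $X_i$ is closed since $S$ is closed. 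Your transversality check (disjointness for same-axis hyperplanes, perfectness of the remaining factor for cross-axis ones and against $X'_i$) is likewise sound. I would only flag that including the $X'_i$ family is not strictly needed for this statement, since the conclusion asks only for $w$-singularity, not total irrationality, but carrying them along does no harm.
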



\subsection{Existence of  totally irrational points with certain uniform exponents} 
Here we prove Theorem \ref{thm: existance of very singular}. 

\begin{proof}[Proof of Theorem \ref{thm: existance of very singular}]
Let $\mathcal{M}$ be a $k$-dimensional real analytic sub-manifold in $\R^d$ which has no open submanifold that is contained in a rational hyperplane of $\R^d$. Then, up to a permutation of its coordinates, $\mathcal{M}$ can be locally parameterized \begin{equation}
    \{(x, f(x)~|~x\in U\},
\end{equation}
where $U$ is a bounded open set in $\R^k$ and $f:U\rightarrow\R^{d-k}$ is a real analytic function. Let $\{X_i\}$ be an enumeration of the intersections of all subspaces of the form $q_1\times\R\times q_{2}\times \R^{d-k}$, where $q_1\in\Q^j,q_2\in\Q^{k-j-1}$ and $0\le j\le k-1$ (we call them type $j$). 
The set of intersections of $\mathcal{M}$ with rational hyperplanes is countable. By Lemma \ref{lem: k-dim manifold}, each such intersection is a finite union of real analytic manifolds. Let $\{X_i'\}$ be the set of all closures of such real analytic manifolds which appear in these unions. 

Let us check that the properties of Theorem \ref{thm: Weiss2004_with rate_new} are satisfied by these sets. 

\textbf{Transversality I:} If $X_i$ and $X_j$ are of same type, then $X_i\cap X_j=\emptyset.$ Otherwise, without loss of generality we can assume $$X_i=\{(x,q_1,\cdots, q_{k-1}, f(x, q_1,\cdots, q_{k-1}))~|~x\in U_1\},$$ and $$X_j=\{(q'_{1},x,q_2', \cdots, q_k', f(q'_{1},x,q_2', \cdots, q_k'), x\in U_2\} .$$ 
It is easy to see that $X_i\cap X_j$ is either empty or a singleton. Thus $X_i=\overline{X_i\setminus X_j}.$

\textbf{Density of Transverse:}
First, note that by definition, $\dim X_j=1$ for any $j$. Hence, $\dim (X_j\cap X_i')\le 1$. If $\dim (X_j\cap X_i')=0$ then by Lemma \ref{lem: k-dim manifold}, it is a finite union of points, and so $X_j=\overline{X_j\setminus X_i'}$, i.e., $j\in J_i$. If $\dim (X_j\cap X_i')=1$, then by Lemma \ref{lem:k-dim manifold}, $X_j\subseteq X_i'$.  
We may deduce that 
\begin{equation}\label{eq: Ji defn}
    j\in J_i\iff X_j\not\subseteq X_i'. 
\end{equation}

Second, without loss of generality let us fix $i$ and assume $X_m$ is of type $1$, i.e.,  \[
X_m=\R\times q_2\times q_3\times\cdots\times q_k\times \R^{d-k}\cap\mathcal{M},\]
for some $q_2,\dots,q_{k}\in \Q$. 
We wish to show that $X_m=\overline{X_m\cap\bigcup_{j\in J_i\setminus\{m\}}X_j}$. Let $q\in\Q$. If there exists $2\le \ell\le k$ such that 
\begin{equation}\label{eq: intersecting set}
    q\times q_2\times\cdots\times q_{\ell-1}\times\R\times q_{\ell+1}\times\cdots\times q_k\times\R^{d-k}\cap\mathcal{M}\not\subseteq X_i',
\end{equation}
Then, there exists some $j\neq m$ such that $X_j$ is equal to the left hand side of \eqref{eq: intersecting set} and by \eqref{eq: Ji defn} we have $j\in J_i$. Thus, $(q,q_2,\dots,q_k,f(1,q,q_2,\dots,q_k))\in X_m\cap\bigcup_{j\in J_i\setminus\{m\}}X_j$. 
since $X_i'$ is a real analytic manifold, if for some $q\in\Q$ \eqref{eq: intersecting set} is not satisfied for any $\ell$, then for some open neighborhood $U$ of $(q_2,\dots,q_k)$ in $\R^{d-1}$ $q\times U\times\R^{d-k}\cap\mathcal{L}\subseteq X_i'$. 


Since $\dim(X_i')\le k-1$, by the definition of $X_i'$ there exist at most one such $q$. Then \[
\left(\Q\setminus \{q\}\right) \times q_2\times\cdots\times q_k\times\R^{d-k}\cap\mathcal{L}\subseteq X_m\cap\bigcup_{j\in J_i\setminus\{m\}}X_j. \]
This proves the claim. 

\textbf{Local Uniformity:}
By Lemma \ref{lem: rational hyperplanes obs} we have that any $X_i$ of type $i_0$ satisfies 
\begin{equation}\label{eq: local uni}
    X_i\subset \hat{\mathcal{W}}_{w,\left(1-\sum_{j\in\{1,\dots, k\}\setminus\{ i_0\}}^k w_j\right)^{-1}}\subseteq \hat{\mathcal{W}}_{w,\left(\sum_{j=k}^d w_j\right)^{-1}},
\end{equation}
where the last inequality follows from the assumption on $w$. 
Now, Local Uniformity follows from Remark \ref{rem: Dani's correspondence}. 

Now, Theorem \ref{thm: Weiss2004_with rate_new} together with Remark \ref{rem: Dani's correspondence} imply the claim. 
\end{proof}



\begin{remark}\label{rem: no assumption on coord}
Note that the assumption about the order of the coordinates of $w$ in Theorem \ref{thm: existance of very singular} is only used in \eqref{eq: local uni}. In order to remove this assumption, one need to make sure that the parametrization in \eqref{eq: parameterization} can be done without permutations of coordinates. This is equivalent to the fact that tangent at almost every point in $\mathcal{M}$ not being parallel to the exes of $x_1,\dots,x_k$.    
\end{remark}

The next result shows that the assumption about $w$ in Theorem \ref{thm: existance of very singular} cannot be omitted, only replaced by the assumption about $\mathcal{M}$ not being parallel to certain exes, as in Remark \ref{rem: no assumption on coord}. 

\begin{proposition}\label{prop: no intersection}
Let $w$ be a proper weight, $\eps>0$, and $1\le k\le d-1$. Then, there exist uncountably many analytic sub-manifold of $\R^d$ of dimension $k$ which are not contained in any rational hyperplanes and do not intersect $\hat{\mathcal{W}}_{w,\left(\sum_{i=k+1}^dw_i\right)^{-1}+\eps}$. 
\end{proposition}

\begin{proof}
Let $w':=\left(\sum_{i=k+1}^dw_i\right)^{-1}(w_{1},\dots,w_{k})$. 
Then, $w'$ is a proper weight, and so the uniform $w'$-exponent of almost every point in $\R^{d-k}$ (with respect to the Lebesgue measure) is $1$ (see \cite{Ca} for the standard norm, and \cite{Guan_Shi} for the general setting). Let $x$ be such a point. Then, for any $\eps>0$ the affine subspace $x\times \R^k$ does not intersect \[\hat{\mathcal{W}}_{w',1+\eps}\times\R^{k}\supseteq \hat{\mathcal{W}}_{w,(1+\eps)\left(\sum_{i=k+1}^{d}w_i\right)^{-1}}.\]Since $\eps$ is arbitrary, we may conclude the claim. 
\end{proof}

\section{Bruhat cells of divergent $S^+$-orbits}

Let $P_0$ be the minimal $\Q$-parabolic group of $G$ which contains $B^+$. The goal of this section is to prove the results in \S\ref{sec:diverging orbits}. Recall the notation of \S\ref{sec:diverging orbits} and \S \ref{sec: Bruhat decomposition}. 

Our first step is to prove the following claim. 

\begin{proposition}\label{prop: needed Bruhat cell}
    Let $g\in B^+ w_1 B^+$, $w_1\in W_{\R}$, and assume that  $S^+\pi(g)$ is a divergent orbit. Then, there exists $w_2\in W_\Q$ such that $w=w_1w_2$ satisfies 
    \begin{equation}\label{eq: good weyl element}
        \{w(\chi_1),\dots,w(\chi_r)\}=\{-\chi_1,\dots,-\chi_r\}. 
    \end{equation}       
\end{proposition}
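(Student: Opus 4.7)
The plan is to combine the Bruhat decomposition with the compactness criterion (Theorem \ref{thm: compactness criterion}) applied to each fundamental representation $\varrho_i$. First, I would write $g = b_1 w_1 b_2$ with $b_1, b_2 \in B^+$ and compute
\[
\varrho_i(g)v_i \;=\; \chi_i(b_2)\,\varrho_i(b_1)\varrho_i(w_1)v_i.
\]
Since $\varrho_i(w_1)v_i$ is a weight vector of weight $w_1\chi_i$ and $\varrho_i(b_1)$, for $b_1 \in B^+$, only adds weight contributions $\ge w_1\chi_i$ in the partial order generated by $\Phi_\R^+$, the leading projection $\varphi_{w_1\chi_i}(\varrho_i(g)v_i)$ is a nonzero multiple of $\varrho_i(w_1)v_i$. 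This gives constants $C_i>0$, independent of $s$, with
\[
\|\varrho_i(sg)v_i\| \;\ge\; C_i\,|\iota(w_1\chi_i)(s)|\qquad \text{for all } s\in S.
\]

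Next, I would translate divergence of $S^+\pi(g)$ into a constraint on the restrictions $\iota(w_1\chi_i)$ viewed as characters on $S$. By Theorem \ref{thm: compactness criterion}, divergence forces $\min_i \|\varrho_i(sg)v_i\| \to 0$ as $s$ leaves every compact subset of $S^+$. Combined with the lower bound above, this means the set $\{s\in S^+ : |\iota(w_1\chi_i)(s)| \ge \varepsilon \text{ for all } i\}$ is pre-compact for every $\varepsilon>0$. Since $\log S^+$ is the cone dual to $\operatorname{cone}(\chi_1,\dots,\chi_r)$, and each $\iota(w_1\chi_i)$ is a weight of $\varrho_i|_S$ (hence lies in $\chi_i - \N\Delta_\Q$), this pre-compactness should force the multi-set identity $\{-\iota(w_1\chi_i)\}_{i=1}^r = \{\chi_1,\dots,\chi_r\}$ in $X^*(S)\otimes\R$, yielding a permutation $\sigma$ with $\iota(w_1\chi_i) = -\chi_{\sigma(i)}$.

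Finally, I would realize this identity at the Weyl-group level. The equality $\iota(w_1\chi_i) = -\chi_{\sigma(i)}$ is an identity in $X^*(S)$, equivalent to $w_1\chi_i + \chi_{\sigma(i)} \in \ker\iota = \operatorname{span}(\Delta_0)$, where $\Delta_0 := \{\alpha\in\Delta_\R:\iota(\alpha)=0\}$. Using the relative Weyl group structure from \S\ref{sec: Bruhat decomposition}, the group $W_0 := \langle s_\alpha : \alpha \in \Delta_0\rangle$ acts trivially on $X^*(S)$ via $\iota$, and $W_\R/W_0$ is naturally identified with $W_\Q$. I would pick $w_2 \in W_\Q$ realizing $\sigma$ in this quotient; right-multiplication by $w_2$ then adjusts $w_1$ so that $\{w_1 w_2 \chi_i\} = \{-\chi_i\}$, which is the claim.

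The main obstacle is the convex-geometric step in paragraph two: extracting the strict multi-set identity $\{-\iota(w_1\chi_i)\} = \{\chi_j\}$, rather than only the weaker statement that each $-\iota(w_1\chi_i)$ lies in $\operatorname{cone}(\chi_1,\dots,\chi_r)$. The sharper conclusion uses two inputs simultaneously: the $\Q$-weight constraint $\iota(w_1\chi_i) \in \chi_i - \N\Delta_\Q$, and the fact that the $r$ half-space conditions $|\iota(w_1\chi_i)|\ge\varepsilon$ must jointly pre-compactify $S^+$, which pins down each $\iota(w_1\chi_i)$ to an extremal ray of $-\operatorname{cone}(\chi_1,\dots,\chi_r)$. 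A subtlety worth mentioning: higher-weight terms in $\varrho_i(g)v_i$ can only enlarge $\|\varrho_i(sg)v_i\|$ on $S^+$, so the leading-weight bound is always the binding one; a secondary bookkeeping issue is the passage from $X^*(S)$ to $X^*(T)$ in the last step, handled via the $W_0$-action.
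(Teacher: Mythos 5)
Your direct approach---reading the leading Bruhat weight of $g$ itself and feeding it into the compactness criterion---has a fundamental gap: it ignores the role of $\Gamma$-representatives, which is precisely what makes $w_2$ land in $W_\Q$ rather than all of $W_\R$.

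Divergence of $S^+\pi(g)$ means that for any divergent sequence $\{s_n\}\subset S^+$ there exist $\gamma_n\in\Gamma$ and indices $i_n$ with $\norm{\varrho_{i_n}(s_n g\gamma_n)v_{i_n}}\to0$. It does \emph{not} give $\norm{\varrho_i(s_n g)v_i}\to0$ for the fixed representative $g$, and your lower bound $\norm{\varrho_i(sg)v_i}\ge C_i\,e^{\iota(w_1\chi_i)(\log s)}$ concerns only that fixed representative. A concrete counterexample to your claimed identity $\iota(w_1\chi_i)=-\chi_{\sigma(i)}$: take $G=\SL_2$, $\Gamma=\SL_2(\Z)$, $g=I$. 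Here $g\in B^+\cdot e\cdot B^+$ so $w_1=e$, the orbit $S^+\pi(I)$ diverges (since $\operatorname{diag}(e^t,e^{-t})e_2\to0$), yet the leading weight of $\varrho_1(I)v_1$ is $\chi_1$, not $-\chi_1$. The smallness is only visible after translating by $\gamma=\left(\begin{smallmatrix}0&-1\\1&0\end{smallmatrix}\right)$, whose Bruhat cell is $B^+sB^+$; the needed $w_2=s$ records exactly this translation.

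This is the content the paper captures in two steps that your sketch has no substitute for. First, one locates a single $\gamma\in\Gamma$ and $w\in W_\R$ with $g\gamma\in B^+wB^+$ and $w(\chi_i)<0$ on $S^+$ for all $i$ (Lemma~\ref{lem: character of div} and Corollary~\ref{cor: good weyl element}); the root-theoretic step there, showing $\langle w(\chi_i),\alpha_j\rangle<0$ forces $w(\Delta_\Q)=-\Delta_\Q$, replaces your convex-geometric ``pin to an extremal ray'' argument. Second, and crucially, one relates the Bruhat cell of $g\gamma$ back to that of $g$: since $\gamma\in\bG(\Q)\subset P_0W_\Q P_0$ by \cite[Thm.~21.15]{borel2012}, the Tits-system axioms give $g\gamma\in P_0 w_1 w_2 P_0$ for some $w_2\in W_\Q$ (Lemma~\ref{lem: Bruhat cell of repr}). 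Your final paragraph instead invokes an identification $W_\R/W_0\cong W_\Q$ and ``picks $w_2$ realizing $\sigma$,'' but no such clean quotient isomorphism holds in general, and even if it did it would not explain why the discrepancy between $g$ and the representative detecting divergence is measured by a rational Weyl element. Without the $\gamma$-bookkeeping the statement is simply false, as the $\SL_2$ example shows.
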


\begin{lemma}\label{lem: character of div}
    Let $x\in X$ and $s\in S$. If the orbit $\{s^nx\}$ diverges, then there exists a representative $g\in G$, $\pi(g)=x$, such that for some $w\in W_\R$ and all $1\le i\le r$\[
     g\in B^+w B^+\quad\text{ and }\quad w(\chi_i)(a)<0.
    \]
\end{lemma}

\begin{proof}
    Since for any $1\le i\le r$ the set $\varrho_i(\Gamma)v_i$ is discrete, there exists $\eps>0$ so that
    \begin{equation}
        \min\{\norm{\varrho_i(g)v_i}:1\le i\le r,\:g\in G\text{ s.t. }\pi(g)=x\}<\eps. 
    \end{equation}
    Assume $\pi(s^ng)$ diverges. It follows from Theorem \ref{thm: compactness criterion} that there exist $1\le i\le r$, a large enough $n$, and $g\in G$ so that $\pi(g)=x$ and $\norm{\varrho_i(s^ng\gamma)v_i}<\eps$. Since $g\gamma\in B^+ w B^+$ we have
    \begin{align*}
        \varrho_i(g\gamma)v_i&= \varrho_i(p_1wp_2)v_i\in\bigoplus_{\lambda>w(\chi_i)}V_{i,\lambda}.
    \end{align*} 
    It follows that \[
    w(\chi_i)(s)<0.
    \]
\end{proof}

\begin{corollary}\label{cor: good weyl element}
    Let $x\in X$ have a divergent $S^+$-orbit. Then, there exists $w\in W_\R$ such that $g\in B^+wB^+$ satisfies $\pi(g)=x$ and $w$ satisfies \eqref{eq: good weyl element}. 
\end{corollary}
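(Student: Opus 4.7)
The plan is to derive this corollary directly from Proposition \ref{prop: needed Bruhat cell}, supplemented by a short argument producing a representative of $x$ inside the required Bruhat cell. Given $x \in X$ with divergent $S^+$-orbit, pick any representative $g_0 \in G$ of $x$; by the Bruhat decomposition there is a unique $w_1 \in W_\R$ with $g_0 \in B^+ w_1 B^+$. Since $\pi(g_0) = x$ has a divergent $S^+$-orbit by hypothesis, Proposition \ref{prop: needed Bruhat cell} applies to $g_0$ and produces $w_2 \in W_\Q$ such that $w := w_1 w_2$ satisfies condition \eqref{eq: good weyl element}. This already identifies the Weyl element $w$ that should appear in the conclusion.

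It then remains to realize $x$ by some representative $g$ lying in the cell $B^+ w B^+$. The key observation is that elements of the relative Weyl group $W_\Q$ admit lifts to the integral normalizer $N_G(S) \cap \Gamma$: the simple reflections $s_\alpha$ for $\alpha \in \Delta_\Q$ that generate $W_\Q$ are defined over $\Q$, and in the standard $\Q$-forms of $G$ they can be realized by integral matrices (for classical groups these are signed permutation matrices). Fixing such an integral lift $n_2$ of $w_2$, set $g := g_0 n_2$. Then $n_2 \in \Gamma$ yields $\pi(g) = \pi(g_0) = x$, so the remaining task is to verify $g \in B^+ w B^+$. This reduces to the standard Bruhat-cell identity that, under a length-additivity hypothesis on $w_1$ and $w_2$, right-multiplication by $n_2$ carries $B^+ w_1 B^+$ into the single cell $B^+ w_1 w_2 B^+$.

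The main obstacle is verifying the length-additivity $\ell(w_1 w_2) = \ell(w_1) + \ell(w_2)$, since in general the product of two Bruhat cells is only a union of cells. I would handle it by inspecting the construction of $w_2$ inside the proof of Proposition \ref{prop: needed Bruhat cell}: the $w_2 \in W_\Q$ produced there should be the shortest correction bringing $w_1$ into a ``good'' form, and such minimality typically forces the length-additivity automatically. If direct appeal fails, one can instead write $w_2$ as a reduced product of simple reflections $s_{\alpha_1} \cdots s_{\alpha_k}$ and apply the right-multiplication shift one step at a time, using the Coxeter Exchange Condition at each step to ensure the Bruhat-cell tracking stays in the predicted cell. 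Either route completes the reduction of the corollary to the already-established proposition.
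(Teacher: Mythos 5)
Your proposal has a fatal circularity. You derive Corollary \ref{cor: good weyl element} from Proposition \ref{prop: needed Bruhat cell}, but in the paper the logical dependence runs the other way: the proof of Proposition \ref{prop: needed Bruhat cell} opens with ``By Corollary \ref{cor: good weyl element}\ldots'' and uses the Corollary to produce a representative $g\gamma\in B^+wB^+$ with $w$ satisfying \eqref{eq: good weyl element}, after which Lemma \ref{lem: Bruhat cell of repr} is invoked to factor $w=w_1w_2$. Plugging the Proposition back in to prove the Corollary would therefore establish neither.

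The paper's actual proof of the Corollary is independent of the Proposition and entirely a root-system computation: it applies Lemma \ref{lem: character of div}, which for a divergent $\pi(s^n)x$ produces a representative $g\in B^+wB^+$ with $w(\chi_i)(s)<0$, and then ranges over $s$ in $S^+$ to extract $\langle w(\chi_i),\alpha_j\rangle<0$ for all $i,j$. Writing $w(\alpha_j)$ in the basis of simple $\Q$-roots and pairing with each $\chi_i$ shows all the coefficients are negative, i.e.\ $w$ sends positive $\Q$-roots to negative ones, hence $w(\Delta_\Q)=-\Delta_\Q$, which is exactly \eqref{eq: good weyl element}. No Bruhat-cell multiplication, integral lifts of $w_2$, or length-additivity is involved; the representative in $B^+wB^+$ is already supplied by Lemma \ref{lem: character of div}.

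Separately, even if the circularity were repaired, your reduction to the claim $B^+w_1B^+\cdot n_2\subset B^+w_1w_2B^+$ is not justified as stated: without length-additivity $\ell(w_1w_2)=\ell(w_1)+\ell(w_2)$ the product $B^+w_1B^+\cdot B^+w_2B^+$ is only a union of cells, and nothing in the construction you would appeal to guarantees this additivity (the $w_2$ in Lemma \ref{lem: Bruhat cell of repr} is just some element of $W_\Q$ produced by the Tits-system induction, with no minimality property built in). So that portion of the argument would also need to be replaced.
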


\begin{proof}
By Lemma \ref{lem: character of div} and the definition of $S^+$, for any $i,j$ we have \[
\langle \chi_i,w(\alpha_j) \rangle=\langle w(\chi_i),\alpha_j \rangle<0. \]
Therefore, by writing $w(\alpha_j)=\sum_{k}a_k\alpha_k$, we get \[
a_i=\langle \chi_i,\sum_{i}a_k\alpha_k \rangle<0. \]
That is, $w$ maps the positive $\Q$-roots to the negative $\Q$-roots. But this implies that  $w:\Delta_\Q\mapsto-\Delta_\Q$, which implies \eqref{eq: good weyl element}. 
\end{proof}

\begin{lemma}\label{lem: Bruhat cell of repr}
    Let $g\in G$ and $\gamma\in\Gamma$. If $g\in P_0 w_1P_0$ for some $w_1\in W_\R$, then there exists $w_2\in W_\Q$ such that $g\gamma\in P_0 w_1w_2 P_0$. 
\end{lemma}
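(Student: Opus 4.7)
The plan is to reduce the statement to the Bruhat decomposition of $G$ with respect to the minimal $\Q$-parabolic $P_0$, which gives the partition $G = \bigsqcup_{w \in W_\Q} P_0 w P_0$, where the relative Weyl group $W_\Q$ parametrizes the $P_0$-double cosets via the natural identification $W_\Q \cong W_0 \backslash W_\R / W_0$ and $W_0$ is the Weyl group of the Levi factor $M_0 = Z_G(S)$ of $P_0$. Under this identification, the hypothesis $g \in P_0 w_1 P_0$ gives $P_0 w_1 P_0 = P_0 \bar w_1 P_0$ for the unique $\bar w_1 \in W_\Q$ representing the $W_0$-double coset of $w_1$, and $g\gamma$ lies in $P_0 w' P_0$ for a unique $w' \in W_\Q$.

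I would then define $w_2 := \bar w_1^{-1} w' \in W_\Q$. Since $\bar w_1 w_2 = w'$, the equality $P_0 \bar w_1 w_2 P_0 = P_0 w' P_0$ is immediate, so the remaining task is to establish $P_0 w_1 w_2 P_0 = P_0 \bar w_1 w_2 P_0$. Writing $w_1 = a \bar w_1 b$ with $a, b \in W_0$ --- elements whose representatives may be chosen in $M_0 \cap N_G(T) \subseteq P_0$ --- the factor $a$ is absorbed into the left $P_0$, reducing the goal to $P_0 \bar w_1 b w_2 P_0 = P_0 \bar w_1 w_2 P_0$.

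The main subtlety, and the step I expect to need the most care, is that a representative of $w_2 \in W_\Q$ can be chosen in $N_G(S)$, so conjugation by $w_2$ preserves $Z_G(S) = M_0$; in particular $c := w_2^{-1} b w_2$ lies in $M_0 \subseteq P_0$, and hence $\bar w_1 b w_2 = \bar w_1 w_2 \cdot c \in P_0 \bar w_1 w_2 P_0$. Chaining the equalities produces $g\gamma \in P_0 w' P_0 = P_0 \bar w_1 w_2 P_0 = P_0 w_1 w_2 P_0$, as desired. Each of the underlying ingredients --- the parametrization of $P_0$-double cosets by $W_\Q$, the inclusion $W_0 \subseteq M_0$, and the stability of $M_0$ under conjugation by $N_G(S)$ --- is a standard fact from the theory of relative root systems, see e.g.\ \cite[\S 21]{borel2012}.
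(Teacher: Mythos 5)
Your proof has a genuine gap. The pivotal step is the claim that there is a ``unique $\bar w_1\in W_\Q$ representing the $W_0$-double coset of $w_1$'', i.e.\ that the natural map $W_\Q\to W_0\backslash W_\R/W_0$ is a bijection. It is injective (since $W_\Q\cap W_0=\{e\}$ under the embedding $W_\Q\hookrightarrow W_\R$), but it is \emph{not} surjective in general, so $\bar w_1$ may simply fail to exist. For example, take $\bG$ to be $\mathrm{SL}_2(D)$ for a quaternion division algebra $D$ over $\Q$ that splits over $\R$, so that $G=\bG(\R)\cong\mathrm{SL}_4(\R)$. Here $W_\R=S_4$, the Levi $M_0=Z_G(S)$ is a form of $\mathrm{GL}_2\times\mathrm{GL}_2$ so $W_0=S_2\times S_2$, and $W_\Q=\Z/2$ embeds as $\{e,(13)(24)\}$. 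Then $W_0\backslash W_\R/W_0$ has three elements (relative position of a pair of $2$-planes, by intersection dimension $0,1,2$) while $|W_\Q|=2$; the middle double coset $W_0(23)W_0$ contains no element of $W_\Q$. So for $w_1=(23)$ the representative $\bar w_1$ on which the rest of your argument hinges does not exist, and the proof does not get off the ground.

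The source of the gap is that, unlike the paper's argument, yours never uses the fact that $\gamma$ is a \emph{rational} element. You are in effect trying to show something about $g\gamma$ for arbitrary $g\gamma\in G$, which is why you are forced into the full parametrization of $P_0\backslash G/P_0$ over $\R$. The paper instead applies the relative Bruhat decomposition over $\Q$ to $\gamma$ alone, writing $\gamma\in P_0w_2'P_0$ with $w_2'\in W_\Q$, and then controls $P_0w_1P_0\cdot P_0w_2'P_0$ via Tits-system absorption and induction on the length of $w_2'$. That is the step your proof would need to replicate: the rationality of $\gamma$ is what guarantees the correcting factor lands in $W_\Q$. The conjugation observation you make (that $W_\Q$ normalizes $W_0$, so $w_2^{-1}bw_2\in W_0$) is correct and is indeed needed implicitly in the paper's proof too, but it cannot substitute for the surjectivity that fails.
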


\begin{proof}
    By \cite[Thm 21.15]{borel2012} $\bG(\Q)\subset P_0 W_\Q P_0$. Thus, for some $w'_2\in W_\Q$ we have $\gamma\in P_0 w'_2 P_0$.
    Let $N_0$ be the maximal unipotent radicals of $P_0$. Then, according to \cite[Thm 21.15]{borel2012}, $(G, P_0, N, \{s_\alpha:\alpha\in\Delta_\R\})$ is a Tits systems. Hence, using induction on the length of $w_2$ and basic properties of a Tits system (see \cite[\S 14.15]{borel2012}), we may find $w_2\in W_\Q$ which satisfies the claim. 
\end{proof}

\begin{proof}[Proof of Proposition \ref{prop: needed Bruhat cell}]
    By Corollary \ref{cor: good weyl element} $\gamma\in\Gamma$ and $w\in W_\R$ so that $g\gamma\in B^+wB^+$ and $w$ satisfies \eqref{eq: good weyl element}. By the uniqueness if the Bruhat decomposition and Lemma \ref{lem: Bruhat cell of repr}, some $w_2\in W_\Q$ satisfies $w=w_1w_2$, as wanted. 
\end{proof}

\begin{proposition}
    Let $g\in G$ and $s\in S^+$ such that $\varrho_i(s^tg)v_i\rightarrow0$ then, $\varrho_i(g)v_i\in$. 
\end{proposition}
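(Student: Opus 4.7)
The most natural completion of the statement is that $\varrho_i(g)v_i$ must lie in the contracting subspace of the $s$-action, namely the sum $\bigoplus_{\lambda\in\Phi_i,\,\lambda(s)<0} V_{i,\lambda}$ of the weight spaces on which $\varrho_i(s)$ acts by strict contraction. The plan is a direct weight-space decomposition argument, which in this setting is essentially linear-algebraic and does not require any additional machinery beyond what was recalled in \S\ref{sec: fundamental representations}.

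First, I would expand $\varrho_i(g)v_i$ using the $S$-weight decomposition $V_i = \bigoplus_{\lambda\in\Phi_i} V_{i,\lambda}$, writing
\[
\varrho_i(g)v_i \;=\; \sum_{\lambda\in\Phi_i} v_\lambda, \qquad v_\lambda := \varphi_\lambda\bigl(\varrho_i(g)v_i\bigr) \in V_{i,\lambda}.
\]
Because $V_{i,\lambda}$ is by definition the $\lambda$-eigenspace for the $S$-action through $\varrho_i$, applying $\varrho_i(s^t)$ then gives
\[
\varrho_i(s^t g)v_i \;=\; \sum_{\lambda\in\Phi_i} e^{t\lambda(s)}\, v_\lambda.
\]

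Next, since the projections $\varphi_\lambda$ are continuous and convergence in a finite-dimensional direct sum can be checked componentwise, the hypothesis $\varrho_i(s^t g)v_i \to 0$ forces $e^{t\lambda(s)} v_\lambda \to 0$ separately for every $\lambda\in\Phi_i$. Whenever $\lambda(s)\ge 0$ the scalar factor $e^{t\lambda(s)}$ is bounded below by $1$, so the only way the corresponding component can tend to zero is if $v_\lambda=0$. Collecting the surviving components gives
\[
\varrho_i(g)v_i \;\in\; \bigoplus_{\lambda\in\Phi_i,\ \lambda(s)<0} V_{i,\lambda},
\]
which is the desired containment.

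The one point that requires genuine care is matching conventions between the additive notation used for the characters throughout the paper and the hypothesis $\chi_i(s)\ge 0$ defining $S^+$: one must confirm that, with these conventions, a dominant $s\in S^+$ really does act by exponential scaling at rate $\lambda(s)$ on $V_{i,\lambda}$, so that the dichotomy between expanding and contracting directions is captured by the sign of $\lambda(s)$. Beyond this bookkeeping, no substantive obstacle arises, since the whole argument reduces to the linear independence of the weight spaces combined with the exponential nature of eigenvalues on a torus.
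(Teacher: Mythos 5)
The statement in the paper is truncated --- the conclusion reads ``$\varrho_i(g)v_i\in$'' with nothing after the $\in$, so you had to guess what it was meant to say. Your completion ($\varrho_i(g)v_i$ lies in the sum of weight spaces on which $s$ contracts) is a plausible one, and your argument for it is correct: decompose $\varrho_i(g)v_i$ along $V_i=\bigoplus_\lambda V_{i,\lambda}$, note that $\varrho_i(s^t)$ scales the $\lambda$-component by $e^{t\lambda(s)}$, and conclude that every component with $\lambda(s)\ge 0$ must vanish. This is elementary and self-contained.

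The paper's own (also truncated) proof takes a different route. It first invokes the Bruhat decomposition $g=b_1wb_2$ with $b_1,b_2\in B^+$, $w\in W_\R$, and uses the structure of highest-weight representations to deduce that $\varrho_i(g)v_i\in\bigoplus_{\lambda\ge w(\chi_i)}V_\lambda$ with a \emph{nonzero} component in the extremal weight space $V_{w(\chi_i)}$. The hypothesis then forces $w(\chi_i)(s)<0$, and the paper goes on to conclude $\langle w(\chi_i),\alpha_j\rangle<0$. In other words, the paper's intended output is not the contracting-subspace containment you derived, but a constraint on the Weyl element $w$ appearing in the Bruhat cell of $g$ --- a combinatorial statement that feeds directly into Corollary~\ref{cor: good weyl element} and Proposition~\ref{prop: needed Bruhat cell}, where the Weyl element $w$ with $w(\chi_i)\in-\Delta_\Q^{\vee}$ is what the later proof of Theorem~\ref{thm: only obvious} actually needs. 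Your argument, while clean and more elementary, does not produce this structural information: knowing $\varrho_i(g)v_i$ lies in $\bigoplus_{\lambda(s)<0}V_\lambda$ does not by itself pin down which Bruhat cell $g$ sits in, and hence is not directly usable in the covering argument that follows. To recover the paper's intended statement from your starting point you would still need to pass through the Bruhat decomposition and the observation that the lowest weight $w(\chi_i)$ occurs with nonzero coefficient --- exactly the steps the paper takes. So your proposal is a correct proof of a weaker (but cleanly stated) statement, rather than a correct proof of what the paper is evidently after.
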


\begin{proof}
    Using - we may write\[
    g=b_1wb_2,\quad b_1,b_2\in B^+, w\in W_\R. \]
    Thus, \[
    \varrho_i(g)v_i\in\bigoplus_{\lambda\ge w(\chi_i)}V_\lambda. \]
    Which implies $w(\chi_i)(s)<0$. Thus $\langle w(\chi_i),\alpha_j\rangle<0$. 
    
\end{proof}

\subsection{Proof of Theorem \ref{thm: only obvious}}

Let $S^+\subset A\subset T$ and $x\in X$ so that $Ax$ is a divergent orbit. 

We show the claim in two steps, first we show that $S^+x$ divergent in an obvious way, and than we use a covering argument to show that $Ax$ divergent in an obvious way. 

Let $g\in G$ be a representative of $x$, i.e. $\pi(g)=x$. Using the `opposite' Bruhat decomposition with respect to $B^-$ (see \S\ref{sec: Bruhat decomposition}) we may write     
\begin{equation}\label{eq: g bruhat decom}
   g=bwu,\qquad b\in B^-,\: w\in W_\R,\: u\in U^-_w.
\end{equation} 

For any divergent sequence $\{s_t\}\subset S^+$ we have $s_t\pi(g)=s_tbs_{-t}s_t\pi(wu)$. 
Since $b\in B^-$, the sequence $\{s_tb s_{-t}\}$ converges. Hence, it follows from  Lemma \ref{lem: divergence} and the assumption that $S^+\pi(g)$ diverges that 
\begin{equation}\label{eq: wu diverges}
    S^+\pi(wu)\text{ diverges.}
\end{equation}
Note that by \eqref{eq: Phi w defn} and \eqref{eq: U w defn} we have 
\begin{equation}\label{eq: wuw inv in Borel}
    wuw^{-1}\in B^+
\end{equation}

Recall the definition of $\iota$ from \S \ref{sec: Bruhat decomposition}. 

\begin{lemma}
    The orbit $S\pi(wu)$ diverges.  
\end{lemma}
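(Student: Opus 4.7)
The approach is to lift the already-established divergence of $S^+\pi(wu)$ to divergence of the full torus orbit $S\pi(wu)$, exploiting the Weyl-chamber tiling of $S$. Since $W_\Q$ is finite and $S^+$ is a fundamental Weyl chamber for the $W_\Q$-action on the cocharacter lattice of $S$, we have the decomposition
\begin{equation*}
S \;=\; \bigcup_{\sigma \in W_\Q} \sigma\, S^+\, \sigma^{-1}.
\end{equation*}
By Lemma \ref{lem: divergence}, showing $S\pi(wu)$ diverges amounts to showing that every divergent sequence $\{s_t\}\subset S$ has a subsequence $\{s_{t_i}\pi(wu)\}$ that diverges in $X$. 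After passing to a subsequence we may take $s_t = \sigma s_t'\sigma^{-1}$ for a fixed $\sigma\in W_\Q$ and a divergent $\{s_t'\}\subset S^+$. Then
\begin{equation*}
s_t\pi(wu) \;=\; \sigma \cdot \pi\bigl(s_t' \cdot \sigma^{-1} wu\bigr),
\end{equation*}
and since left multiplication by the fixed element $\sigma\in G$ is a homeomorphism of $X$, the task reduces to showing that $S^+\pi(\sigma^{-1}wu)$ diverges for each $\sigma \in W_\Q$.

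For each $\sigma$ I would apply the opposite Bruhat decomposition to write $\sigma^{-1}wu = b_\sigma w_\sigma u_\sigma$ with $b_\sigma\in B^-$, $w_\sigma\in W_\R$, and $u_\sigma\in U_{w_\sigma}^-$. The same contraction argument invoked immediately before this lemma---namely that $\{s_t' b_\sigma (s_t')^{-1}\}$ stays bounded when $b_\sigma\in B^-$ and $\{s_t'\}\subset S^+$ diverges---reduces the problem once more to showing that $S^+\pi(w_\sigma u_\sigma)$ diverges for each $\sigma\in W_\Q$. For $\sigma=e$ this is the hypothesis. For general $\sigma$, I expect to use Corollary \ref{cor: good weyl element} to arrange that the replacement Weyl element $w_\sigma$ satisfies an analogue of the signed-permutation property $\{w(\chi_i)\}=\{-\chi_i\}$, and then apply the compactness criterion (Theorem \ref{thm: compactness criterion}) to the $\Q$-fundamental representations: the highest weight vector $v_i$, transported to $\varrho_i(w_\sigma u_\sigma)v_i$, acquires a nontrivial component in the $w_\sigma(\chi_i)$-weight space whose norm is contracted to zero under every deeply divergent $s_t'\in S^+$, while any higher-weight components can be cancelled by suitably chosen $\gamma\in\Gamma$.

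The main obstacle is this final verification: certifying that the opposite Bruhat data $(w_\sigma,u_\sigma)$ produced by the re-decomposition of $\sigma^{-1}wu$ inherit the good structure of $(w,u)$ needed to drive divergence of $S^+\pi(w_\sigma u_\sigma)$. Left multiplication by $\sigma^{-1}\in W_\Q$ does not preserve opposite Bruhat cells in a manifestly simple way, so pinning down $w_\sigma$ and transferring the signed-permutation property requires a careful combinatorial analysis of how Weyl-translates permute the cells $B^- w' U_{w'}^-$, using the uniqueness of the Bruhat decomposition together with the $W_\Q$-invariance built into the definition of $S^+$. Granting this, the compactness-criterion argument together with the Weyl-chamber reduction yields the lemma.
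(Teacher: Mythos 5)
Your Weyl-chamber tiling $S=\bigcup_{\sigma\in W_\Q}\sigma S^+\sigma^{-1}$ is correct, but the resulting reduction is essentially a restatement rather than a simplification: proving $S^+\pi(\sigma^{-1}wu)$ diverges for every $\sigma\in W_\Q$ is, via exactly the tiling you invoke, equivalent to proving $S\pi(wu)$ diverges. Only the case $\sigma=e$ is a hypothesis; for $\sigma\neq e$ you would need to establish divergence from scratch, and your appeal to Corollary \ref{cor: good weyl element} is circular because that corollary takes divergence of the $S^+$-orbit as an input. You flag this honestly as the ``main obstacle,'' but it is the entire content of the lemma, so the proposal as written leaves the proof incomplete.

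The paper's argument is genuinely different and sidesteps this circularity. Given an arbitrary divergent $\{a_t\}\subset S$ it dichotomizes on the sign behavior of the fundamental weights: if some subsequence satisfies $\chi_j(a_{t_i})\geq -c$ for all $j$, it translates by a fixed $d\in S$ with $\chi_j(d)\geq c$ so that $\{da_{t_i}\}\subset S^+$, and divergence follows from the $S^+$-hypothesis together with Lemma \ref{lem: divergence}. Otherwise some $\chi_j(a_t)\to-\infty$, and here it uses the already-established Proposition \ref{prop: needed Bruhat cell} (not Corollary \ref{cor: good weyl element} applied to a new point) to produce $w_2\in W_\Q$ with $ww_2$ a signed permutation of the fundamental weights; this yields a rational vector $v$ with $\varrho_i(a_twu)v=e^{\chi_i(a_t)}u\to 0$, and Theorem \ref{thm: compactness criterion} gives divergence. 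The key missing ingredient in your proposal is this second branch: constructing a concrete rational vector, via the signed-permutation Weyl element furnished by Proposition \ref{prop: needed Bruhat cell}, that contracts along any sequence leaving $S^+$ in a negative direction.
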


\begin{proof}
    We prove the claim using Lemma \ref{lem: divergence}. That is, let $\{a_t\}\subset S$ be a divergent sequence, then we wish to show that for some subsequence of it, $\{a_{t_i}\}$ the sequence $\{a_{t_i}\pi(wu)\}$ also diverges. We need to consider two cases. 

    First, assume that for some $c>0$ there exists a subsequence $\{a_{t_i}\}\subset\{a_{t}\}$ which satisfies 
    \begin{equation}\label{eq: divergent 1st case}
        \chi_j(a_{t_i})\ge-c \quad\text{for all } 1\le j\le r.
    \end{equation}
    Let $d\in S$ such that $\chi_j(d)\ge c$ for all $1\le j\le r$. Then, $\{da_{t_i}\}$  is a divergent sequence in $S^+$. By Lemma \ref{lem: divergence}, $\{da_{t_i}\pi(wu)\}$ has a diverges subsequence. Hence, so does $\{a_{t_i}\pi(wu)\}$.

    Next, we may assume \eqref{eq: divergent 1st case} is not satisfied. That is, up to replacing $\{a_t\}$ with a subsequence of it, for some $1\le j\le r$ we have
    \begin{equation}\label{eq: chi_j diverge}
        \chi_j(a_t)\rightarrow-\infty. 
    \end{equation}

    It follows from \eqref{eq: wuw inv in Borel} that $wu=wuw^{-1}w\in B^+wB^+$. Therefore by Proposition \ref{prop: needed Bruhat cell} there exists $w_2\in W_\Q$ such that $w'=ww_2$ satisfies \eqref{eq: good weyl element}. In particular, there exists $1\le i\le r$ such that $w'(-\chi_i)=\chi_j$. 
    Fix $v\in\varrho_j(w_2)V_{-\chi_j}(\Q)$ (note that it is also a rational vector since $w_2\in\bG(\Q)$). Then, $\varrho_j(w)v\in V_{\chi_i}$ is a highest weight vector, and so \eqref{eq: wuw inv in Borel} implies that $\varrho_i(a_twuw^{-1}w)v=u\in V_{\chi_i}$. Hence, using \eqref{eq: chi_j diverge} we get 
    \begin{align*}
        \varrho_i(a_twu)v& =\varrho_i(a_twuw^{-1}w)v
        =\varrho_i(a_t)u        =e^{\chi_i(a_t)}u\rightarrow0
    \end{align*}
    as $t\rightarrow\infty$.
\end{proof}

Using \cite[Thm 1.3]{ST} we may conclude that $S\pi(wu)$ diverges in an obvious way. In particular, there exist finitely many rational representations $\varrho_1,\dots,\varrho_k$ and vectors $v_1,\dots,v_k$, where  $\varrho_j: G\rightarrow\GL(V_j)$ and $v_j\in V_j(\Q)$, such that for any divergent sequence $\{a_i\}_{i=1}^\infty\subset S^+$ there exist a subsequence $\{a_i'\}_{i=1}^\infty\subset \{a_i\}_{i=1}^\infty$ and an index $1\le j\le k$, such that $\varrho_j\{a_i'wn\}v_j\xrightarrow{i\to \infty} 0$. Since for any such sequence we have $a_iba_i^{-1}\rightarrow e$, the orbit $S^+\pi(bwn)=S^+x$ also diverges in an obvious way. 

\subsection{Proof of Theorem \ref{thm: exists one-parameter diverges}}

We wish to use Theorem \ref{thm: Barak geeralization3} to prove the claim, and so we need to define sequences of subsets $\{X_i\}$, $\{X_i'\}$, $\{A_i\}$, and show that they satisfy the hypotheses of Theorem \ref{thm: Barak geeralization3}. 

First, let $\{X_i\}$ be an enumeration of the sets $\{P_j g: j= 1,2,\:g\in \textbf{G}(\Q)\}$ (where $P_1,P_2$, and $\textbf{G}(\Q)$ are defined in \S\ref{sec: fundamental representations}) and $\{X_i'\}$ all be the empty set. 
Then, the \textbf{Density} and \textbf{Transversality I} properties follow as in the proof of \cite[Thm 3.9]{Weiss2004}. Since $\{X_i'\}$ are all empty, the \textbf{Transversality II} property is also satisfied.

Next, we need to define $\{A_i\}$ and show the \textbf{Local Uniformity} property. In order to define $\{A_i\}$, we need to state some observation. One is that by replacing $\Delta_{\Q}$ with $-\Delta_{\Q}$ we may assume 
\begin{equation}\label{eq:a- def}
S^+=\left\{ s\in S: \forall i\quad\chi_{i}\ge0\right\}.
\end{equation}
The second is that $S$ is split and of dimension $r$, and so it can be identified with $\R^r$. In particular, we may equip it with a norm. Now, for any $i\in\N$ we let 
\begin{equation}
A_{i}:=\left\{ s\in S^+\::\:\chi_{j}\left(s\right)\leq e^{-1/i}\norm s\text{ for }j=1,2\right\}.
\end{equation}

The sequence $\{A_i\}$ is made of an embedded subsets of $S^+$. Moreover, since any one-parameter subsemigroup of $S^+$ is defined by linear functionals of the $\chi_i$'s, any such subsemigroup is contained in $A_i$ for some $i$. Thus, if we show the local uniformity property, then the claim will follow from Theorem \ref{thm: Barak geeralization3}. 

Fix $i\in\N$, let $\left\{a_{t}\right\}\subset A_i$
be a divergent sequence, and let $g\in P_1\textbf{G}(\Q)\cup P_2\textbf{G}(\Q)$. Then, $\norm{a_{k}}\rightarrow\infty$ as
$k\rightarrow\infty$. Without loss of generality, we may assume $g=pq$, where $p\in P_1$ and $q\in\textbf{G}(\Q)$. Recall the definition of $V(\Q)$ and $\varrho_i$, $v_i$ for $i=1,2$ from \S\ref{sec: fundamental representations}. 
Since $q$ is rational, $\varrho_1(q)V(\Q)=V(\Q)$, and so $v_i=\varrho(q)v\in \varrho_1(q)V(\Q)$. Since $P_1$ is the normalizer of $v_1$ and $p\in P_1$, for some $c>0$ we have $\varrho_1(p)v_1=cv_1$. Then, \[
\norm{\varrho_1(a_tg)v}=c\norm{\varrho_1(a_t)v_i}=ce^{-\chi_i(s)}\norm{v}\rightarrow 0, \]
as $s\rightarrow\infty$. 
Then, by Theorem \ref{thm: compactness criterion} and Lemma \ref{lem: divergence} the orbit $A_i\pi(g)$ diverges.

{\small
\subsection*{Acknowledgements}
Authors thank Ralf Spatzier who encouraged the authors to work on this project. SD was in part supported by the Knut and Alice Wallenberg Foundation and also by the EPSRC
grant EP/Y016769/1. Last but not the least, SD thanks Subhajit for his support when it was needed the most. 
}

 \bibliographystyle{abbrv}
\bibliography{conebib.bib}

\end{document}